\documentclass{article}

\usepackage{graphicx,xcolor,textpos}
\usepackage{helvet}
\usepackage[margin = 1in]{geometry}
\usepackage[utf8]{inputenc}
\usepackage[english]{babel}
\usepackage[T1]{fontenc}
\usepackage{gensymb}
\usepackage{float}
\usepackage{caption}
\usepackage{subcaption}
\usepackage{amsmath, mathtools, mathrsfs}
\usepackage{amssymb}
\usepackage{amsthm}
\usepackage{enumitem}
\usepackage[makeroom]{cancel}
\usepackage{hyperref}
\usepackage{tikz}
\usepackage{tikz-cd}
\usepackage{blkarray}
\usepackage[framemethod = default]{mdframed}
\usepackage{multirow}
\usetikzlibrary{calc}
\usepackage{todonotes}
\usepackage{siunitx}

\newtheorem{theorem}{Theorem}[section]
\theoremstyle{definition} 
\newtheorem{definition}[theorem]{Definition}
\theoremstyle{plain}
\newtheorem{prop}[theorem]{Proposition}
\theoremstyle{definition} 
\newtheorem{example}[theorem]{Example}
\theoremstyle{plain}
\newtheorem{lemma}[theorem]{Lemma}
\newtheorem{cor}[theorem]{Corollary}

\theoremstyle{definition}
\newtheorem{remark}[theorem]{Remark}
\theoremstyle{plain}
\newtheorem{theoremintro}{Theorem}[section]

\newtheorem{corintro}[theoremintro]{Corollary}

\DeclareMathOperator{\GL}{GL}

\DeclareMathOperator{\Aut}{Aut}

\DeclareMathOperator{\Id}{Id}

\DeclareMathOperator{\Gal}{Gal}

\DeclareMathOperator{\Perm}{Perm}

\DeclareMathOperator{\spn}{span}

\DeclareMathOperator{\Stab}{Stab}
\DeclareMathOperator{\Orb}{Orb}
\DeclareMathOperator{\LE}{LE}

\DeclareMathOperator{\supp}{supp}

\DeclareMathOperator{\std}{std}
\DeclareMathOperator{\End}{End}

\newcommand{\Z}{\mathbb{Z}}
\newcommand{\R}{\mathbb{R}}
\newcommand{\C}{\mathbb{C}}
\newcommand{\Q}{\mathbb{Q}}
\newcommand{\Qbar}{\overline{\mathbb{Q}}}
\newcommand{\N}{\mathbb{N}}
\newcommand{\G}{\mathcal{G}}

\newcommand{\g}{\mathfrak{g}}

\newcommand{\n}{\mathfrak{n}}

\newcommand{\asi}[1]{\prescript{\sigma}{}{#1}}
\newcommand{\ata}[1]{\prescript{\tau}{}{#1}}
\newcommand{\E}[1]{\{ 1, \ldots, #1 \}}

\title{A characterization of Anosov rational forms in \\ nilpotent Lie algebras associated to graphs}

\author{Jonas Der\'e and Thomas Witdouck\thanks{The second author was supported by a PhD fellowship of the Research Fund - Flanders (FWO), Grant Number 1153122N.}}

\begin{document}
	\maketitle
	
	\begin{abstract}
		Anosov diffeomorphisms are an important class of dynamical systems with many peculiar properties. Ever since they were introduced in the sixties, it has been an open question which manifolds can admit such diffeomorphisms, where tori of dimension greater than or equal to two are the typical examples. It is conjectured that the only manifolds supporting an Anosov diffeomorphism are finitely covered by a nilmanifold, a type of manifold closely related to rational nilpotent Lie algebras. 
		
		In this paper, we study the existence of Anosov diffeomorphisms for a large class of these nilpotent Lie algebras, namely the ones that can be realized as a rational form in a Lie algebra associated to a graph. From a given simple undirected graph, one can construct a complex $c$-step nilpotent Lie algebra, which in general contains different non-isomorphic rational forms, as described by the authors in previous work. We determine precisely which forms correspond to a nilmanifold admitting an Anosov diffeomorphism, leading to the first class of complex nilpotent Lie algebras having several non-isomorphic rational forms and for which all the ones that are Anosov are described. In doing so, we put a new perspective on certain classifications in low dimensions and correct a false result in the literature.
	\end{abstract}
	
	\section{Introduction}
	An Anosov diffeomorphism on a closed manifold $M$ is a diffeomorphism $f: M \to M$ that preserves a continuous splitting $TM = E^s \oplus E^u$ of the tangent bundle such that $df$ exponentially contracts the elements of $E^s$ and exponentially expands the elements in $E^u$. The easiest example of such a map is the so-called Arnold's cat map, i.e.~the map induced by the matrix $\begin{pmatrix}
		2 & 1 \\ 1 & 1
	\end{pmatrix}$ on the torus $\R^2/\Z^2$. In his seminal paper \cite{smal67-1}, S.~Smale introduced the first non-toral example of a manifold admitting an Anosov diffeomorphism, raising the question which manifolds can have an Anosov diffeomorphism. This new example was given on a nilmanifold $N/\Gamma$, i.e.~the compact quotient of a $1$-connected nilpotent Lie group $N$ by a lattice $\Gamma \subset N$. 

	It has been conjectured that every closed manifold admitting an Anosov diffeomorphism is, up to homeomorphism, finitely covered by a nilmanifold. Hence an important first step to understand the manifolds with an Anosov diffeomorphism is to study nilmanifolds. For tori, which are the compact quotients of the abelian Lie group $N = \R^n$, the problem is completely solved, namely an Anosov diffeomorphism exists if and only if $n > 1$. 
	
	Corresponding to every nilmanifold $N/\Gamma$, there is a (up to isomorphism) unique rational nilpotent Lie algebra $\n^\Q$, which is a rational form of the real Lie algebra $\n^\R$ corresponding to $N$. Conversely,  by the work of Mal'cev \cite{malc49-2}, every rational form of $\n^\R$ corresponds to a lattice $\Gamma$ of $N$ that is uniquely determined up to commensurability, i.e.~up to having an isomorphic subgroup of finite index. Moreover, by combining the results proven in \cite{deki99-1} and \cite{mann74-1} the existence of an Anosov diffeomorphism depends only on this rational Lie algebra $\n^\Q$. More specifically, $N/\Gamma$ admits an Anosov diffeomorphism if and only if $\n^\Q$ admits an automorphism which is both \textit{hyperbolic}, i.e. has no eigenvalues of absolute value 1, and \textit{integer-like}, i.e. has a characteristic polynomial with integer coefficients and constant term equal to $\pm 1$. Such an automorphism will also be called an \textit{Anosov automorphism} and a rational Lie algebra admitting one an \textit{Anosov Lie algebra}. In the remainder we hence focus on rational forms of real (or complex) $c$-step nilpotent Lie algebras to study Anosov diffeomorphisms on the corresponding nilmanifolds.

	There are several instances for which the existence of Anosov automorphisms is known. Free $c$-step nilpotent Lie algebras have a unique rational form, and this form admits an Anosov automorphism if and only if the number of generators is greater than or equal to $c+1$, see \cite{dv09-1}. A generalization in the $2$-step nilpotent case for one specific rational form of nilpotent Lie algebras associated to a simple undirected graph $\G$ is given in \cite{dm05-1}, we refer to Section \ref{sec:prelimOnLieAlgGraphs} for the exact definition of these Lie algebras. Note that the aforementioned example by S.~Smale is a rational form in such a real Lie algebra associated to a graph, but it does not fall under the main result of \cite{dm05-1}, see Example \ref{ex:twoCopiesHeisenberg}. There is also a classification of Anosov automorphisms on Lie algebras up to dimension $8$ in \cite{lw09-1}, containing two families of different rational forms in a fixed real Lie algebra of nilpotency class $2$, which are in both cases a Lie algebra associated to a graph. Later, this classification was slightly corrected in \cite{dere13-1}, where the close relation between Galois theory and Anosov diffeomorphisms was explored for the first time. Finally, there was an attempt to generalize the results in \cite{dm05-1} to the $c$-step nilpotent case in \cite{main12-1}, but as we will demonstrate below the main result of the latter paper is false.
	
	In conclusion, there are only sparse examples in the literature of real or even complex Lie algebras which contain more than one rational form and in which all forms with an Anosov automorphism are classified. In this paper we greatly extend the known results by fully characterizing the existence of Anosov diffeomorphisms on quotients of $c$-step nilpotent Lie groups associated to graphs. Our main result gives the first class of Lie algebras having distinct rational forms and with a general characterization of all Anosov rational forms. 

	Let $\G$ be a simple undirected graph. One can define an equivalence relation on the vertices by saying two vertices are equivalent if and only if their transposition (leaving all other vertices fixed) is a graph automorphism. The equivalence classes are called \textit{coherent components} and $\Lambda$ denotes the set of coherent components of $\G$. This allows one to define a quotient graph on $\Lambda$, written as $\overline{\G}$, which is a vertex weighted graph, possibly with loops. For any field $K \subset \C$ and integer $c > 1$, we write $\n_{\G, c}^K$ for the $c$-step nilpotent Lie algebra over $K$ associated to $\G$. These Lie algebras are defined in detail in section \ref{sec:prelimOnLieAlgGraphs}, together with the classification of their rational forms from \cite{dw22-1}. This classification gives for any injective group morphism $\rho:\Gal(L/\Q) \to \Aut(\overline{\G})$, with $L\subset \C$ a finite Galois extension of $\Q$, a rational form $\n_{\rho, c}^\Q$ of $\n_{\G, c}^\C$ and up to $\Q$-isomorphism, these are all the rational forms of $\n_{\G, c}^{\C}$.

	The main result of this paper uses the above description of all rational forms of $\n_{\G, c}^\C$ and provides an easy to check condition on the $\rho$-action of the Galois group on the quotient graph to determine whether the form is Anosov or not. In order to do so we define for any Galois extension $L/\Q$ with $L \subset \C$ and any morphism $\rho:\Gal(L/\Q) \to \Aut(\overline{\G})$ the following function on the coherent components
	\begin{equation}
		\label{eq:definitionz_rho}
		z_\rho:\Lambda \to \left\{\frac{1}{2}, \, 1\right\}: \lambda \mapsto
		\begin{cases}
			1 &\text{ if } \exists \sigma \in \Gal(L/\Q): \rho_{\tau \sigma}(\lambda) = \rho_{\sigma}(\lambda)\\
			\frac{1}{2} &\text{ else}
		\end{cases}
	\end{equation}
	where $\tau \in \Gal(L/\Q)$ denotes the complex conjugation automorphism (with the convention that it is the identity automorphism if $L \subset \R$). Note that $z_\rho$ is constant on $\rho$-orbits and that it takes the value $1$ on an orbit if and only if $\rho_\tau$ fixes a coherent component in that orbit. A set of coherent components will be called connected if the underlying set of vertices is connected in the graph (see Definition \ref{def:connectedGraph}).
	\begin{theoremintro}
		\label{thm:AnosovOrbitCondInjective}
		Let $\G$ be a graph, $\Lambda$ its set of coherent components, $L/\Q$ a finite Galois extension and $\rho: \Gal(L/\Q) \to \Aut(\overline{\G})$ an injective group morphism. The associated rational form $\n_{\rho, c}^\Q$ of $\n^{\C}_{\G, c}$ is Anosov if and only if for each non-empty connected set of coherent components $A \subset \Lambda$ such that $A \cup \rho_\tau(A)$ is $\rho$-invariant, it holds that
		\[c < \sum_{\lambda \in A \cup \rho_\tau(A)} z_\rho(\lambda) \cdot |\lambda|.\]
	\end{theoremintro}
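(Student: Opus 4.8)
The plan is to combine the standard reduction of the Anosov property to a spectral condition on a semisimple automorphism with the explicit description of the rational forms $\n^{\Q}_{\rho,c}$ and of their automorphism groups from \cite{dw22-1}, and then to perform a combinatorial optimisation over the possible absolute values of eigenvalues.

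First I would reduce to semisimple automorphisms. The algebraic‑group Jordan decomposition of an automorphism of the $\Q$‑Lie algebra $\n^{\Q}_{\rho,c}$ is again rational, and the semisimple part has the same characteristic polynomial, hence the same eigenvalues; so $\n^{\Q}_{\rho,c}$ is Anosov if and only if it admits a semisimple, hyperbolic, integer‑like automorphism. Using the construction of $\n^{\Q}_{\rho,c}$ from \cite{dw22-1} and the resulting description of its automorphism group, I would then reduce further to automorphisms $\varphi$ that respect the decomposition $V = \n/[\n,\n] = \bigoplus_O V_O$ into blocks indexed by the $\rho$‑orbits $O$ of coherent components: such a $\varphi$ is determined on the $O$‑block by a semisimple matrix $g_O$ over a number field $E_O$ naturally attached to $O$, of size the common size $|\lambda|$ of the components $\lambda\in O$, and its eigenvalues on $(V_O)_{\C}$ consist, for each eigenvalue $\nu$ of $g_O$, of all $\Q$‑conjugates of $\nu$. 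A norm computation — using that the multiplicity $N_v$ with which a vertex $v$ occurs in a Hall basis of $\n_{\G,c}$ is constant on each coherent component — then shows that $\varphi$ is integer‑like exactly when every eigenvalue of every $g_O$ is an algebraic unit.

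Next I would pass to logarithms of absolute values. Diagonalising $\varphi$ over $\overline{\Q}$ and writing $x_v = \log|\mu_v|$ for the ``vertex eigenvalues'' $\mu_v$, the eigenvalues of $\varphi$ on $\n_{\C}$ are the products $\prod_l \mu_{v_l}$ over iterated brackets of generators that are nonzero in $\n_{\G,c}$; such a bracket is nonzero precisely when its vertex multiset $M$ (of size at most $c$) has connected support in $\G$. Hence $\varphi$ is hyperbolic if and only if for every such $M$ the linear form $\ell_M(x)=\sum_v M(v)x_v$ does not vanish at $x$. Dirichlet's unit theorem identifies, as $\varphi$ ranges over the admissible automorphisms, the set of attainable vectors $x$: it is dense in — and contains a full lattice of — a subspace $\mathcal V=\bigoplus_O \mathcal V_O$, the largest such being obtained by taking each $g_O$ with irreducible characteristic polynomial whose splitting field has as few complex places as possible, which yields $\mathcal V_O=\{x\in\R^{V_O}:\sum_{v} x_v=0,\ x_v=x_{\bar v}\}$ with $\bar v$ the position complex‑conjugate to $v$. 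Since only finitely many multisets $M$ of size at most $c$ occur, $\n^{\Q}_{\rho,c}$ is Anosov if and only if no $\ell_M$ with $|M|\le c$ vanishes identically on $\mathcal V$.

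The remaining, and principal, step is to translate this last condition into the statement of Theorem~\ref{thm:AnosovOrbitCondInjective}. A form $\ell_M$ vanishes on $\mathcal V$ exactly when, restricted to each block $V_O$, the non‑negative integer vector $M$ is an integer combination of the all‑ones vector and the conjugation‑difference vectors; spelling this out, a minimal \emph{balanced} multiset (one vanishing on $\mathcal V$) that uses an orbit $O$ must use all of $V_O$ unless $O$ is totally complex — equivalently $z_\rho\equiv\tfrac12$ on $O$ — in which case the vertices lying over one $\rho_\tau$‑transversal of the components of $O$ already suffice, so its minimal cost on $O$ is $z_\rho(O)\,|V_O|=\sum_{\lambda\in O}z_\rho(\lambda)|\lambda|$; this is the origin of the factor \eqref{eq:definitionz_rho}. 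One then shows that a union of $\rho$‑orbits $\tilde A$ carries a balanced multiset with connected support of total size $\sum_{\lambda\in\tilde A}z_\rho(\lambda)|\lambda|$ if and only if $\tilde A=A\cup\rho_\tau(A)$ for some non‑empty connected set of coherent components $A$: given such a multiset one takes $A$ to be the set of components it meets, and conversely one builds the multiset from $A$ as the indicator of the vertex set obtained by taking all vertices over the orbits with a real place and the vertices over a $\rho_\tau$‑transversal inside $A$ on the totally complex orbits, checking that this vertex set spans a connected subgraph of $\G$. Combining this with the reductions above yields the theorem. I expect the bulk of the work to lie in this combinatorial equivalence — in particular in reconciling the connectivity requirement on the support with the freedom in choosing $\rho_\tau$‑transversals, in the degenerate case of a loopless coherent component of size $\ge 2$ (where $\{\lambda\}$ is not a connected set of components, so a strictly larger $A$ must be exhibited), and in the bookkeeping that singles out $A\cup\rho_\tau(A)$, which reflects that complex conjugation is the only Galois automorphism directly constraining absolute values — and, on the structural side, in the reduction of the automorphism group in the first step, which relies on \cite{dw22-1}.
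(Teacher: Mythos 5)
Your proposal follows essentially the same route as the paper: the reduction to semisimple, block-respecting automorphisms whose integer-likeness is equivalent to the vertex eigenvalues being units is the paper's Theorem~\ref{thm:AnosovEigenvalueCond} (proved by conjugating into the vertex-diagonal maximal torus), the Dirichlet-theorem description of the achievable $\log|\cdot|$ vectors is Lemma~\ref{lem:existHypUnit} combined with Lemma~\ref{lem:existenceGaloisExtension} (which realises your ``as few complex places as possible'' choice of splitting field), and the combinatorial translation you flag as the bulk of the work --- a $\rho_\tau$-transversal with connected support, with the degenerate case $A=\{\lambda,\rho_\tau(\lambda)\}$ treated separately --- is precisely Lemma~\ref{lem:existenceSetB} and the two-case analysis in Theorem~\ref{thm:AnosovOrbitCond}. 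The differences are only cosmetic: the paper describes the eigenvalues of vertex-diagonal automorphisms via Lyndon elements (Proposition~\ref{prop:eigenvaluesVertexDiag}, which also tidies up the slight imprecision in ``connected support of a vertex multiset'' by excluding single-vertex multisets of multiplicity $\geq 2$), and packages the Galois constraint through the orbit space $\mathcal{H}_\G^{\Qbar}$ and a map $\Psi$ rather than through your dual formulation with linear forms vanishing on $\mathcal{V}$.
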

	Note that the condition on the $\rho$-action only depends on how the orbits look like and which coherent components are fixed under the action of the complex conjugation automorphism, making it straight-forward to check the condition for concrete actions. The proof combines properties of hyperbolic algebraic units with the description of rational forms as given in Theorem \ref{thm:injectiveVersionClassificationRationalForms}. If $L \subset \R$, the condition for $\n_{\rho, c}^\Q$ to be Anosov reduces to: all non-empty connected $\rho$-invariant subsets $A \subset \Lambda$ must satisfy $c < \sum_{\lambda \in A} |\lambda|$. By applying the main result to the standard rational form $\n^\Q_\G$ we find that the main result of \cite{main06-1} is false, and we present a counterexample in Section \ref{sec:applications}

	In section \ref{sec:applications}, we demonstrate the ease to apply the aforementioned theorems to specific classes of graphs, a first of which is trees. We can formulate the result on the level of nilmanifolds as follows.
	
	\begin{corintro}
		\label{cor:AnosovNilmanifoldTree}
		Let $\G$ be a tree and $N_{\G, c}$ the associated $c$-step nilpotent simply connected Lie group. For any cocompact lattice $\Gamma \leq N_{\G,c}$, we have that the nilmanifold $N_{\G, c}/\Gamma$ does not admit an Anosov diffeomorphism.
	\end{corintro}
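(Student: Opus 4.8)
By Mal'cev's correspondence together with the characterization recalled in the introduction, a cocompact lattice $\Gamma \leq N_{\G,c}$ determines a rational form of $\n^\R_{\G,c}$, and $N_{\G,c}/\Gamma$ admits an Anosov diffeomorphism if and only if that rational form is Anosov. By Theorem~\ref{thm:injectiveVersionClassificationRationalForms} every rational form of $\n^\C_{\G,c}$ is $\Q$-isomorphic to $\n^\Q_{\rho,c}$ for some finite Galois extension $L/\Q$ and some injective $\rho\colon\Gal(L/\Q)\to\Aut(\overline{\G})$, so it suffices to prove that \emph{no} such $\n^\Q_{\rho,c}$ is Anosov. The plan is, for each $\rho$, to exhibit a non-empty connected set $A$ of coherent components with $A\cup\rho_\tau(A)$ being $\rho$-invariant and $\sum_{\lambda\in A\cup\rho_\tau(A)} z_\rho(\lambda)\,|\lambda|\leq 2\leq c$ (using $c>1$), which violates the criterion of Theorem~\ref{thm:AnosovOrbitCondInjective}.

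I would first isolate two inputs coming from the tree structure. \emph{Structural fact:} if two distinct vertices of a tree have their transposition as a graph automorphism they must both be leaves attached to a common vertex (otherwise they share two neighbours and close a cycle); hence every coherent component of size $\geq 2$ is a set of leaves and is a leaf of $\overline{\G}$, while every vertex of $\overline{\G}$ of degree $\geq 2$ has underlying component of size $1$; moreover contracting these leaf-clusters leaves $\overline{\G}$ a tree (the only degenerate case $\G=K_2$ gives $\overline{\G}$ a single vertex with a loop and component size $2$). \emph{Jordan centre:} the centre of the tree $\overline{\G}$ is a single vertex $v^\ast$ or a single edge $\{v^\ast,w^\ast\}$, and it is setwise fixed by $\Aut(\overline{\G})$, hence by $\rho$ and by $\rho_\tau$. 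Now split into cases. If $\overline{\G}$ is a single vertex $\lambda$ then $\G\in\{K_1,K_2\}$, $|\lambda|\leq 2$, $z_\rho(\lambda)=1$, and $A=\{\lambda\}$ has weighted size $\leq 2$. If the centre is a vertex $v^\ast$ and $\overline{\G}$ has at least two vertices, then $v^\ast$ has degree $\geq 2$, so $|v^\ast|=1$; the set $A=\{v^\ast\}$ is connected, $A\cup\rho_\tau(A)=\{v^\ast\}$ is $\rho$-invariant, $z_\rho(v^\ast)=1$, weighted size $1$. If the centre is an edge $\{v^\ast,w^\ast\}$: when this edge is all of $\overline{\G}$ the structural fact forces one endpoint to have component of size $1$, and that endpoint is fixed by $\Aut(\overline{\G})$, so the corresponding singleton works; when $\overline{\G}$ has at least three vertices both $v^\ast,w^\ast$ have degree $\geq 2$, hence $|v^\ast|=|w^\ast|=1$, and I would take $A=\{v^\ast\}$ if $\rho_\tau$ swaps $v^\ast,w^\ast$ (then $A\cup\rho_\tau(A)=\{v^\ast,w^\ast\}$ is the whole $\rho$-orbit and $z_\rho(v^\ast)=z_\rho(w^\ast)=\tfrac12$, weighted size $1$) and $A=\{v^\ast,w^\ast\}$ otherwise (then $\rho_\tau(A)=A$ is $\rho$-invariant and connected, $z_\rho\equiv 1$ on $A$, weighted size $2$). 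In every case the weighted size is at most $2\leq c$, contradicting Theorem~\ref{thm:AnosovOrbitCondInjective}, so $\n^\Q_{\rho,c}$ is not Anosov.

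The main obstacle is the book-keeping around a centre edge of $\overline{\G}$: one must guarantee that the chosen set does not pick up a large leaf-cluster, and this is exactly where the structural fact is needed, both to conclude that interior vertices of $\overline{\G}$ have size-$1$ components and to see that two leaf-clusters can be adjacent in $\overline{\G}$ only when $\overline{\G}=P_2$, in which case one falls back on a size-$1$ singleton. The remaining verifications — that a single vertex or a single edge of $\overline{\G}$ with size-$1$ endpoints induces a connected set of coherent components in the sense of Definition~\ref{def:connectedGraph}, and the evaluation of $z_\rho$ in each case — are routine once the centre has been analysed.
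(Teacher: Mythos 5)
Your proposal is correct, and the reduction at the start (Mal'cev plus Theorem \ref{thm:injectiveVersionClassificationRationalForms}, then violating the criterion of Theorem \ref{thm:AnosovOrbitCondInjective} with a connected invariant set of weighted size at most $2\leq c$) is exactly the strategy the paper follows; the difference lies in where the center argument is run. The paper takes the center $C$ of the tree $\G$ itself: since automorphisms preserve eccentricity, $C$ is a union of coherent components invariant under $\Aut(\G)$, hence (via the splitting $r$) the corresponding set of components is $\Aut(\overline{\G})$-invariant, and $|C|\in\{1,2\}$ immediately bounds $\sum z_\rho(\lambda)|\lambda|\leq 2\leq c$ in all three cases ($C$ one vertex, $C$ one component of size two, $C$ two singleton components) with no structural analysis of $\overline{\G}$ needed. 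You instead work with the Jordan center of $\overline{\G}$, which forces you to first prove the leaf-cluster description of coherent components of a tree, that $\overline{\G}$ is again a tree, and that center vertices of $\overline{\G}$ of degree at least two carry components of size one; this is all correct (including your evaluation of $z_\rho$ when $\rho_\tau$ swaps the two center components), but it is more bookkeeping than the paper's route, whose choice of $C\subset\G$ makes the weight bound automatic. One small point you leave implicit: in the case $\overline{\G}=P_2$, the size-one endpoint is fixed by $\Aut(\overline{\G})$ because the two endpoints have different weights — both being size one would mean $\G=K_2$, whose quotient is a single vertex — though even without this, your treatment of the "at least three vertices" subcase (taking $A=\{v^\ast\}$ or $A=\{v^\ast,w^\ast\}$ according to whether $\rho_\tau$ swaps them) would apply verbatim and still give weighted size at most $2\leq c$, so nothing breaks.
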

	
	The following natural class to consider are the cycle graphs, for which we found the following result on the level of nilmanifolds.
	
	\begin{corintro}
		\label{cor:AnosovNilmanifoldCycle}
		Let $\G$ be the cycle graph of order $n \geq 5$ and $N_{\G, c}$ the associated $c$-step nilpotent simply connected Lie group. There exists a cocompact lattice $\Gamma \leq N_{\G,c}$ such that the nilmanifold $N_{\G, c}/\Gamma$ admits an Anosov diffeomorphism if and only if $n > c$.
	\end{corintro}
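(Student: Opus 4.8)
The plan is to apply Theorem~\ref{thm:AnosovOrbitCondInjective} to the cycle graph $\G = C_n$ with $n \geq 5$, and to show that the existence of \emph{some} Anosov lattice is equivalent to the existence of \emph{some} injective morphism $\rho: \Gal(L/\Q) \to \Aut(\overline{\G})$ for which the orbit condition of Theorem~\ref{thm:AnosovOrbitCondInjective} holds, which by the classification of rational forms (Theorem~\ref{thm:injectiveVersionClassificationRationalForms}) is in turn equivalent (via Mal'cev's correspondence) to the existence of a lattice $\Gamma$ making $N_{\G,c}/\Gamma$ Anosov. First I would record the relevant combinatorics of $C_n$: for $n \geq 5$ the cycle has no two vertices whose transposition is an automorphism, so each coherent component is a singleton, $|\lambda| = 1$ for all $\lambda \in \Lambda$, and $\overline{\G} = \G = C_n$ with $\Aut(C_n) = D_n$, the dihedral group of order $2n$. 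Thus the condition reduces to: $\n_{\rho,c}^\Q$ is Anosov iff for every non-empty connected $A \subset \Lambda$ with $A \cup \rho_\tau(A)$ being $\rho$-invariant, one has $c < \sum_{\lambda \in A \cup \rho_\tau(A)} z_\rho(\lambda)$.

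For the ``only if'' direction ($n \leq c$ forces no Anosov lattice): I would show that for \emph{every} injective $\rho: \Gal(L/\Q) \to D_n$ there is a connected $\rho$-invariant set (so $A \cup \rho_\tau(A) = A$) violating the inequality, the obvious candidate being $A = \Lambda$ itself, which is always connected and $\rho$-invariant since automorphisms permute all vertices. Then $\sum_{\lambda \in \Lambda} z_\rho(\lambda) \cdot |\lambda| \leq \sum_{\lambda \in \Lambda} 1 = n \leq c$, so the strict inequality $c < n$ fails and the form is not Anosov. Since this holds for all $\rho$, by the classification no rational form of $\n_{\G,c}^\C$ is Anosov when $n \leq c$, hence no lattice works.

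For the ``if'' direction ($n > c$ yields an Anosov lattice): I would exhibit one good $\rho$. The natural choice is to take $L/\Q$ a totally real cyclic extension of degree $n$ (which exists, e.g.\ inside a cyclotomic field by Kronecker--Weber / Dirichlet, for instance $L = \Q(\zeta_p)^+$ for a suitable prime $p \equiv 1 \pmod{n}$), so $\Gal(L/\Q) \cong \Z/n\Z$, and let $\rho$ send a generator to the rotation by one vertex in $C_n$; this is injective. Since $L \subset \R$ we have $\tau = \mathrm{id}$, so $z_\rho \equiv 1$ and $\rho_\tau = \mathrm{id}$, and the condition of Theorem~\ref{thm:AnosovOrbitCondInjective} becomes: every non-empty connected $\rho$-invariant $A \subset \Lambda$ satisfies $c < |A|$. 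The only $\rho$-invariant subset is $A = \Lambda$ (the rotation acts as an $n$-cycle on vertices, so any nonempty invariant set is everything), and $|A| = n > c$ by hypothesis. Hence $\n_{\rho,c}^\Q$ is Anosov, and the associated lattice $\Gamma \leq N_{\G,c}$ gives an Anosov nilmanifold.

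The main obstacle is the careful handling of the ``only if'' direction: one must be sure that $A = \Lambda$ is a legitimate choice in the statement of Theorem~\ref{thm:AnosovOrbitCondInjective} (it is non-empty, connected since $C_n$ is connected, and trivially $\rho_\tau$-invariant so $A \cup \rho_\tau(A) = \Lambda$ is $\rho$-invariant), and that $\sum_{\lambda} z_\rho(\lambda)|\lambda| \leq n$ genuinely holds for every $\rho$ — which it does because $z_\rho \leq 1$ pointwise and $\sum_{\lambda} |\lambda| = n$. A secondary point requiring care is the passage between ``$\n_{\rho,c}^\Q$ is Anosov for some $\rho$'' and ``some lattice $\Gamma$ makes $N_{\G,c}/\Gamma$ Anosov'': this uses that the rational forms of Theorem~\ref{thm:injectiveVersionClassificationRationalForms} exhaust all rational forms up to $\Q$-isomorphism, combined with Mal'cev's correspondence between rational forms and commensurability classes of lattices and the fact (from \cite{deki99-1, mann74-1}) that the Anosov property depends only on the rational Lie algebra; I would state this reduction explicitly at the start of the proof.
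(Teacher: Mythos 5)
Your proposal is correct and takes essentially the same route as the paper: reduce via Mal'cev and the classification of rational forms (Theorem~\ref{thm:injectiveVersionClassificationRationalForms}) to checking the orbit condition of Theorem~\ref{thm:AnosovOrbitCondInjective}, note that for $n\ge 5$ every coherent component of the cycle is a singleton with $\Aut(\overline\G)=D_n$, take $A=\Lambda$ as a universal witness to kill the case $n\le c$, and exhibit a totally real cyclic $\rho$ of degree $n$ landing on the full rotation to realize an Anosov form when $n>c$. The only organisational difference is that the paper first proves the finer Proposition in Section~\ref{sec:applications} (the standard and reflection-type rational forms of $\n^\R_{\G,c}$ are never Anosov; all other real rational forms are Anosov iff $n>c$) via Corollary~\ref{cor:GraphAnosovRatOrbitConditionReal}, and reads off Corollary~B from it; your argument extracts exactly the corollary by observing that $A=\Lambda$ already forces $\sum_\lambda z_\rho(\lambda)|\lambda|\le n$ for every $\rho$, which is a small but legitimate shortcut that avoids the case split on $\rho$. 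One cosmetic slip: $\Q(\zeta_p)^+$ has degree $(p-1)/2$, not $n$, so you should pass to its unique cyclic subfield of degree $n$ (or simply cite \cite[Lemma 5.4]{dw22-1} as the paper does); this does not affect the substance.
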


	Finally, we illustrate how certain classifications in low dimensions are immediate from our main result.
	
	\section{Preliminaries}
	
	\subsection{Number fields and algebraic units with hyperbolic properties}
	\label{sec:numberFieldsAndAlgUnitsWithHypProp}
	
	All fields in the paper are assumed to be subfields of $\C$. A \textit{number field} is defined as a subfield $K \subset \C$ such that $K$ is a finite dimensional vector space over $\Q$, where we denote this dimension as $[K:\Q]$. It is well-known that a non-zero ring morphism between fields needs to be injective, and we will call such morphisms embeddings. Every number field $K \subset \C$ has exactly $[K:\Q]$ different embeddings $K \to \C$. If $\tau:\C \to \C$ denotes the complex conjugation automorphism, then an embedding $\sigma: K \to \C$ satisfies $\tau \circ \sigma = \sigma$ if and only if $\sigma(K) \subset \R$. We will call $\sigma$ a \textit{real embedding} if $\sigma(K) \subset \R$ and $\sigma, \tau \circ \sigma$ a \textit{conjugated pair of complex embeddings} if $\sigma(K) \nsubseteq \R$. If $s$ is the number of real embeddings and $t$ the number of conjugated pairs of complex embeddings, then $s + 2t = [K:\Q]$. We say a number field is \textit{totally real} if all its embeddings in $\C$ are real.
	
	An element of $\C$ is called \textit{algebraic over $\Q$} if it is a root of some polynomial with rational coefficients. The set of all elements algebraic over $\Q$ is a subfield of $\C$ and is equal to the algebraic closure of $\Q$, which we thus write as $\Qbar \subset \C$. Note that for any embedding $\sigma:K \to \C$ of a number field $K$, we have that $\sigma(K) \subset \Qbar$. By consequence we also have an embedding $\sigma:K \to \Qbar$. Conversely, every embedding of $K$ into $\Qbar$ gives an embedding of $K$ into $\C$ by composing it with the inclusion of $\Qbar$ in $\C$.
	
	For any extension $K \subset F$ of fields, we write $\Aut(F/K)$ for the group of field automorphisms of $F$ that fix every element of $K$. An extension $F/K$ will be called \textit{Galois} if $$\{ a \in L \mid \forall \sigma \in \Aut(L/K):  \sigma(a) = a\} = K.$$ In the case that the field extension $L/K$ is Galois, we write $\Gal(L/K)$ for $\Aut(L/K)$. A number field $K$ is a Galois extension of $\Q$ if and only if $\sigma(K) = K$ for any embedding $\sigma:K \to \C$. Note that $\C/\Q$ is not a Galois extension, but $\Qbar/\Q$ is. Therefore, in some cases, it will be more convenient to work with $\Qbar$ instead of $\C$.
	
	Given a number field, the following lemma shows the existence of a Galois extension of that field which satisfies a condition on the number of real and complex embeddings.
	\begin{lemma}
		\label{lem:existenceGaloisExtension}
		Let $K$ be a number field with $s$ real embeddings and $t$ conjugated pairs of complex embeddings in $\C$. For any positive integer $d \in \N$, there exists a Galois extension $F/K$ of degree $[F:K] = d$ such that $F$ has $d \cdot s$ real embeddings and $d \cdot t$ conjugated pairs of complex embeddings in $\C$.
	\end{lemma}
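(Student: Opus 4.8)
The plan is to build $F$ as a compositum $F = KE$, where $E/\Q$ is an auxiliary \emph{totally real} Galois extension of $\Q$ of degree $d$ that is linearly disjoint from $K$ over $\Q$, i.e.\ $E \cap K = \Q$. Granting such an $E$, the extension $F/K$ is Galois: if $E$ is the splitting field over $\Q$ of a separable polynomial $f$, then $F = KE$ is the splitting field of $f$ over $K$. Moreover $[F:K] = [E : E \cap K] = [E:\Q] = d$ by linear disjointness. So the construction reduces to two tasks: counting the real and complex embeddings of such an $F$, and exhibiting the field $E$.

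For the embedding count I would analyse how the embeddings of $K$ extend to $F$. Since $F/K$ is separable of degree $d$, every embedding $\sigma : K \to \C$ has exactly $d$ extensions $\tilde\sigma : F \to \C$. If $\sigma$ is real, then for any such $\tilde\sigma$ the restriction $\tilde\sigma|_E$ is an embedding of the totally real field $E$, hence has image in $\R$; as $F$ is generated over $\Q$ by $K$ and $E$ and $\tilde\sigma(K) = \sigma(K) \subset \R$, we get $\tilde\sigma(F) \subset \R$, so all $d$ extensions of $\sigma$ are real. If $\sigma$ is complex, then $\tilde\sigma(F) \supseteq \sigma(K)$ and the latter is not contained in $\R$, so none of the $d$ extensions is real. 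Hence the real embeddings of $F$ are precisely the extensions of the $s$ real embeddings of $K$, giving $ds$ of them; the remaining $[F:\Q] - ds = d(s + 2t) - ds = 2dt$ embeddings are complex and therefore form $dt$ conjugated pairs, as required.

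The real content is the construction of $E$, for which I would use cyclotomy together with Dirichlet's theorem on primes in arithmetic progressions. Choose a prime $p \equiv 1 \pmod{2d}$ not dividing the discriminant of $K$; this is possible since $\gcd(1,2d) = 1$ and only finitely many primes ramify in $K$. Let $E$ be the unique subfield of $\Q(\zeta_p)$ with $[E:\Q] = d$, which exists because $\Gal(\Q(\zeta_p)/\Q) \cong \Z/(p-1)$ is cyclic and $d \mid p-1$; thus $E/\Q$ is cyclic, in particular Galois, of degree $d$. Since $2d \mid p-1$, the index-$d$ subgroup of $\Z/(p-1)$ contains the unique element of order $2$, which corresponds to complex conjugation, so $E$ is fixed by complex conjugation and hence totally real. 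Finally, $\Q(\zeta_p)/\Q$ is totally ramified at $p$, and multiplicativity of ramification indices in the tower $\Q \subseteq E \subseteq \Q(\zeta_p)$ (together with the bounds $e(p, E/\Q) \le d$ and $e(p, \Q(\zeta_p)/E) \le (p-1)/d$ whose product is $p-1$) forces $E/\Q$ to be totally ramified at $p$. As $p$ is unramified in $K$, the field $E_0 := E \cap K$ is unramified at $p$ as a subfield of $K$, while the tower $\Q \subseteq E_0 \subseteq E$ gives $d = e(p, E/\Q) = e(p, E/E_0)\cdot e(p, E_0/\Q) = e(p, E/E_0) \le [E:E_0]$, forcing $[E_0:\Q] = 1$. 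Hence $E \cap K = \Q$, the required linear disjointness. (The case $d = 1$ is trivial with $E = \Q$.)

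I expect the main obstacle to be exactly this last point, $E \cap K = \Q$: matching the degree $d$ precisely is immediate from the cyclic structure of $\Gal(\Q(\zeta_p)/\Q)$, and total reality is delivered by the congruence $p \equiv 1 \pmod{2d}$, but ruling out a common subfield of $E$ and the fixed field $K$ is what genuinely needs the ramification argument (equivalently, one could argue via rational primes that split in the candidate subfields). Everything else is bookkeeping with embeddings of number fields.
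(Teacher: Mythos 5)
Your argument is correct and the high-level strategy coincides with the paper's: set $F = KL$ for a totally real degree-$d$ Galois extension $L/\Q$ with $L \cap K = \Q$, observe $F/K$ is Galois of degree $d$, and count embeddings. The two places where you diverge are in the details. First, the paper simply invokes \cite[Lemma 5.4]{dw22-1} for the existence of the auxiliary field $L$, whereas you give a self-contained cyclotomic construction: pick $p \equiv 1 \pmod{2d}$ unramified in $K$ by Dirichlet, take the degree-$d$ subfield $E$ of $\Q(\zeta_p)$, use $2d \mid p-1$ to see complex conjugation lies in the index-$d$ subgroup (so $E$ is totally real), and use total ramification of $\Q(\zeta_p)/\Q$ at $p$ together with $p$ unramified in $K$ to force $E \cap K = \Q$. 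This ramification argument is the genuine content that the paper outsources, and it is sound. Second, the embedding count is organized differently: the paper sets up the restriction map $\mathcal{M}_F \to \mathcal{M}_K \times \mathcal{M}_L$, shows it is a bijection by a cardinality count, and then characterizes real embeddings of $F$ as exactly those whose $K$-component is real; you instead fix an embedding $\sigma$ of $K$ and argue directly about its $d$ extensions to $F$, using that $\tilde\sigma(E) \subset \R$ always holds since $E$ is totally real. Both counts are equivalent; yours avoids explicitly invoking the bijection but relies (correctly) on separability to get exactly $d$ extensions per embedding. In short: same skeleton, but your proof is more self-contained and arguably more elementary, at the cost of a longer ramification digression that the paper compresses into a citation.
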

	\begin{proof}
		We can assume that all fields are subfields of $\C$. From \cite[Lemma 5.4.]{dw22-1}, it is clear that there exists a totally real Galois extension $L/\Q$ of degree $d$ such that $K \cap L = \Q$. Now take $F := KL$. Using Proposition 3.19 and Corollary 3.20 from \cite{miln22-1}, we find that $F/K$ is Galois and that
		\[[F:\Q] = [F:K][K:\Q] = [L:\Q][K:\Q] = d\cdot(s+2t).\]
		Let us write in general $\mathcal{M}_{K}$ for the set of embeddings from the number field $K$ to $\C$. Consider the map
		\[ g:\mathcal{M}_F \to \mathcal{M}_K \times \mathcal{M}_L: \sigma \mapsto (\sigma|_K, \sigma|_L). \]
		It is clear this map is injective since any embedding of $F$ in $\C$ is completely determined by its values on $K$ and $L$. Since the number of embeddings equals the degree of the number field, it follows from the above that $\mathcal{M}_F$ and $\mathcal{M}_K \times \mathcal{M}_L$ have the same cardinality. We thus have that $g$ is a bijection. If we let $\tau$ denote the complex conjugation automorphism on $\C$, we have for any $\sigma \in \mathcal{M}_F$ the equivalences
		\begin{align*}
			\tau \circ \sigma = \sigma &\Leftrightarrow g(\tau \circ \sigma) = g(\sigma)\\
			&\Leftrightarrow (\tau \circ \sigma|_K, \tau \circ \sigma|_L) = (\sigma|_K, \sigma|_L)\\
			&\Leftrightarrow (\tau \circ \sigma|_K, \sigma|_L) = (\sigma|_K, \sigma|_L)\\
			&\Leftrightarrow \tau \circ \sigma|_K = \sigma|_K.
		\end{align*}
		From this it follows that the number of real embeddings of $F$ into $\C$ is exactly equal to $d\cdot s$. As a consequence, the number of conjugated pairs of complex embeddings of $F$ into $\C$ must be equal to $d\cdot t$.
	\end{proof}
	
	The \textit{algebraic integers} of a number field $K$ are all elements of $K$ with minimal polynomial having integer coefficients. The set of algebraic integers of $K$ is closed under addition and multiplication and thus forms a ring. The units of this ring are called the \textit{algebraic units} of $K$ and are exactly those algebraic integers with minimal polynomial having constant term equal to $\pm 1$. Note that since an Anosov automorphism is integer-like (see introduction for definition), all of its eigenvalues are algebraic units. The group of algebraic units of a number field $K$ is written as $U_K$ and its structure has been described by Dirichlet's Unit Theorem (see \cite{st87-1}). The theorem states that if $K$ is a number field with $s$ real embeddings $\sigma_1, \ldots, \sigma_s:K \to \C$ and $t$ conjugated pairs of complex embeddings, where we list one of each pair as $\sigma_{s+1}, \ldots, \sigma_{s+t}:K \to \C$, then the map
	\begin{equation}
		\label{eq:DirichletsUnitThmMap}
		l:U_K \to \R^{s+t}: \xi \mapsto (\log|\sigma_1(\xi)|, \ldots, \log|\sigma_s(\xi)|, 2\log|\sigma_{s+1}(\xi)|, \ldots, 2\log|\sigma_{s+t}(\xi)|)
	\end{equation}
	has finite kernel and its image $l(U_K)$ is a cocompact lattice of the vector subspace
	\[ W_{s+t} := \{ (x_1, \ldots, x_{s+t}) \in \R^{s+t} \mid x_1 +\ldots + x_{s+t} = 0 \}. \]
	Using this description of $U_K$, we can prove the following lemma on the existence of algebraic units with hyperbolic properties. This lemma is a generalization of \cite[Proposition 3.6]{dd13-1} and the proof is very similar.
	
	\begin{lemma}
		\label{lem:existHypUnit}
		Let $K/\Q$ be a number field with $s$ real embeddings $\sigma_1, \ldots, \sigma_s:K \to \C$ and $t$ conjugated pairs of complex embeddings, where we list one of each pair as $\sigma_{s+1}, \ldots, \sigma_{s+t}:K \to \C$. Then for any $c \in \N$, there exists an algebraic unit $\xi \in K$ such that for any $e_1, \ldots,e_{s+t} \in \Z^{\geq 0}$ with $e_1 + \ldots + e_{s+t} \leq c$ it holds that
		\begin{equation} 
			\label{eq:chyperbolicity}
			\Bigl( \sigma_1(\xi)^{e_1} \cdot \ldots \cdot \sigma_{s+t}(\xi)^{e_{s+t}} = 1 \Bigr) \: \Rightarrow \: \Bigl( 2e_1 = \ldots = 2e_s = e_{s+1} = \ldots = e_{s+t} \Bigr).
		\end{equation}
	\end{lemma}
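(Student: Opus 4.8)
The plan is to produce $\xi$ from Dirichlet's Unit Theorem by arranging that its image $l(\xi)$ under the map \eqref{eq:DirichletsUnitThmMap} is generic in a suitable sense. The first step is to linearize the hypothesis: applying $\log|\cdot|$ to the relation $\sigma_1(\xi)^{e_1}\cdots\sigma_{s+t}(\xi)^{e_{s+t}} = 1$ gives $\sum_{i=1}^{s+t} e_i \log|\sigma_i(\xi)| = 0$, which in the coordinates of $W_{s+t}$ is exactly $\langle l(\xi), v_{\mathbf e}\rangle = 0$, where $v_{\mathbf e} := (e_1, \dots, e_s, \tfrac12 e_{s+1}, \dots, \tfrac12 e_{s+t})$ and $\langle\cdot,\cdot\rangle$ is the standard inner product on $\R^{s+t}$. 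The key observation is that the conclusion $2e_1 = \cdots = 2e_s = e_{s+1} = \cdots = e_{s+t}$ of \eqref{eq:chyperbolicity} is equivalent to the assertion that $v_{\mathbf e}$ is a scalar multiple of $\mathbf 1 := (1, \dots, 1)$. Since $l(\xi) \in W_{s+t} = \mathbf 1^\perp$ for every unit $\xi$, and since $W_{s+t}^\perp = \R\mathbf 1$, the set $v_{\mathbf e}^\perp \cap W_{s+t}$ is a proper (codimension-one) subspace of $W_{s+t}$ precisely when $v_{\mathbf e} \notin \R\mathbf 1$. Hence it suffices to choose $\xi \in U_K$ so that $l(\xi)$ avoids $v_{\mathbf e}^\perp$ for every $\mathbf e \in (\Z^{\geq 0})^{s+t}$ with $e_1 + \cdots + e_{s+t} \leq c$ and $v_{\mathbf e} \notin \R\mathbf 1$; there are only finitely many such $\mathbf e$.

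For the second step I would use that a full-rank lattice is never contained in a finite union of proper subspaces. By Dirichlet's theorem $l(U_K)$ is a cocompact, hence full-rank, lattice in $W_{s+t}$, so one can pick $\R$-linearly independent $w_1, \dots, w_r \in l(U_K)$ with $r = s+t-1$, say $w_j = l(\eta_j)$ with $\eta_j \in U_K$. For each bad tuple $\mathbf e$, the pullback of $v_{\mathbf e}^\perp \cap W_{s+t}$ along $(a_1, \dots, a_r) \mapsto \sum_j a_j w_j$ is the kernel of the functional $(a_j) \mapsto \sum_j a_j \langle w_j, v_{\mathbf e}\rangle$ on $\R^r$, and this functional is nonzero because $v_{\mathbf e} \notin \R\mathbf 1 = W_{s+t}^\perp$ while the $w_j$ span $W_{s+t}$. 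Evaluating all these (finitely many) functionals on the integer vector $(1, N, N^2, \dots, N^{r-1})$ yields, for each bad $\mathbf e$, a nonzero polynomial in $N$ with real coefficients, which has only finitely many real roots; choosing $N \in \N$ outside the union of these finite root sets, the unit $\xi := \prod_{j=1}^{r} \eta_j^{N^{\,j-1}}$ satisfies $l(\xi) = \sum_j N^{j-1} w_j \notin v_{\mathbf e}^\perp$ for all bad $\mathbf e$. Then any $\mathbf e$ with $e_1 + \cdots + e_{s+t} \le c$ and $\sigma_1(\xi)^{e_1}\cdots\sigma_{s+t}(\xi)^{e_{s+t}} = 1$ forces $l(\xi) \in v_{\mathbf e}^\perp$, hence $v_{\mathbf e} \in \R\mathbf 1$, which is exactly \eqref{eq:chyperbolicity}. (When $s + t = 1$ the claim is vacuous and one may take $\xi = 1$.)

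The only genuinely delicate point is this genericity step: ruling out that $l(U_K)$ is entirely absorbed by the finitely many forbidden hyperplanes of $W_{s+t}$, and realizing the avoiding vector by an explicit algebraic unit rather than an abstract element of $W_{s+t}$. The passage to logarithms, the reformulation of the target identity as proportionality to $\mathbf 1$, and the finiteness of the relevant index set are all routine. As the statement is only a mild generalization of \cite[Proposition 3.6]{dd13-1}, I expect the final write-up to parallel that argument closely.
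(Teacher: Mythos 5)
Your proof is correct and follows essentially the same route as the paper's: linearize via $\log|\cdot|$, observe that the excluded tuples correspond to proper hyperplanes meeting $W_{s+t}$ in codimension one, and apply Dirichlet's Unit Theorem to find a lattice point avoiding this finite union. The only divergence is that the paper simply cites the fact that a cocompact lattice cannot lie in a finite union of proper subspaces, whereas you make this explicit with the Vandermonde-type choice $\xi = \prod_j \eta_j^{N^{j-1}}$; this is a harmless elaboration of the same step.
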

	\begin{proof}
		Let $l:U_K \to \R^{s+t}$ be the map as defined in (\ref{eq:DirichletsUnitThmMap}) for the number field $K$. From Dirichlet's Unit Theorem, we know that $l(U_K)$ is a cocompact lattice in the vector subspace $W_{s+t} \subset \R^{s+t}$. Now consider the following finite collection of hyperplanes in $\R^{s+t}$:
		\[ \mathcal{H} = \left\{ H \leftrightarrow \sum_{i=1}^s 2e_ix_i + \sum_{i=s+1}^{s+t} e_ix_i = 0 \quad \middle| \quad 
		\arraycolsep=1.4pt\def\arraystretch{1.5}
		\begin{array}{c}
		e_1, \ldots, e_{s+t} \in \Z^{\geq 0},\: \sum_{i=1}^{s+t} e_i \leq c \text{ and } \\
		\neg(2e_1 = \ldots = 2e_s = e_{s+1} = \ldots = e_{s+t})
		\end{array} 
		\right\}. \]
		Note that for any $H \in \mathcal{H}$ we have that $H \neq W_{s + t}$ and thus that $H \cap W_{s+t}$ is a $s+t-2$ dimensional vector subspace of $W_{s+t}$. Since a cocompact lattice in a real vector space can never be contained in a finite union of proper vector subspaces, it follows that there exists a $\xi \in U_K$ such that
		\[ l(\xi) \in W_{s+t} \setminus \left( \bigcup_{H \in \mathcal{H}} H \right). \]
		As one can check, $\xi$ satisfies the required property.
	\end{proof}
	
	\subsection{Lie algebras associated to graphs}
	\label{sec:prelimOnLieAlgGraphs}
	
	A graph is defined as a tuple $\G = (S, E)$ where $S$ is a finite set of vertices and $E$ is a subset of $\{ \{ \alpha, \beta \} \mid \alpha, \beta \in S, \alpha \neq \beta \}$, forming the edges. To this information, one can associate a Lie algebra in the following way. For any field $K \subset \C$, let $\mathfrak{f}^K(S)$ denote the free Lie algebra generated by the set $S$. Let $I_\G \subset \mathfrak{f}^K(S)$ denote the Lie ideal of $\mathfrak{f}^K(S)$ generated by the set of elements $\{[\alpha, \beta] \mid \alpha, \beta \in S, \{\alpha, \beta \} \notin E \}$. The Lie algebra $\g_{\G}^K$ is then defined as the quotient
	\[\g_{\G}^K = \mathfrak{f}^K(S)/I_\G.\]
	The Lie algebras obtained in this way are sometimes referred to as the \textit{free partially commutative Lie algebras}. For any Lie algebra $\g$, let $\gamma_i(\g)$ denote the $i$-th ideal in the lower central series of $\g$, defined inductively by $\gamma_1(\g) = \g$ and $\gamma_{i+1}(\g) = [\g, \gamma_{i}(\g)]$. One defines the $c$-step nilpotent Lie algebra associated to $\G$ over the field $K$ as
	\[\n_{\G,c}^K = \g_{\G}^K/\gamma_{c + 1}(\g_{\G}^K).\]

	\label{sec:groupOfGradedAutos}
	Let $V$ denote the vector subspace of $\n_{\G, c}^K$ spanned by the vertices $S$ and write $T_{\G, c} \leq \Aut(\n_{\G, c}^K)$ for the subgroup consisting of all automorphisms $f$ that satisfy $f(V) = V$. Note that we can identity the abelianization of $\n_{\G, c}^K$ with the vector space $V$ via the map $V \to \n_{\G, c}^K/[\n_{\G, c}^K, \n_{\G, c}^K]: v \mapsto v + [\n_{\G, c}^K, \n_{\G, c}^K]$. Every automorphism of $\n_{\G, c}^K$ induces an automorphism on the abelianization and thus via this identification also on the vector space $V$. This gives a group morphism
	\begin{equation}
		\label{eq:morphismp}
		p:\Aut(\n_{\G, c}^K) \to \GL(V).
	\end{equation}
	As one can check $p|_{T_{\G, c}}$ is injective and thus an isomorphism onto its image. It was proven in \cite[Proposition 2.2.]{dw22-1} that $p(T_{\G, c}) = p(\Aut(\n_{\G, c}^K))$ and that this image does not depend on the nilpotency class $c$. We will write $G = p(T_{\G, c}) = p(\Aut(\n_{\G, c}^K)) \leq \GL(V)$.  
	
	The group $G$ is a linear algebraic group and has been described in \cite{dm21-1} for fields $K \subset \C$. Let us briefly recall this description. For any vertex $\alpha \in S$, we define its \textit{open and closed neighbourhood} by
	\[ \Omega'(\alpha) = \{ \beta \in S \mid \{\alpha, \beta\} \in E \} \quad \quad \text{and} \quad \quad \Omega(\alpha) = \Omega(\alpha) \cup \{\alpha\}, \]
	respectively. A relation $\prec$ on $S$ is then defined by $\alpha \prec \beta\, \Leftrightarrow \, \Omega'(\alpha) \subset \Omega(\beta)$. Let us write $I$ for the identity map on $V$ and for any two vertices $\alpha, \beta \in S$, let us write $E_{\alpha \beta} \in \End(V)$ for the linear map which maps $\beta$ to $\alpha$ and any other vertex to zero. Let $M$ denote the unipotent subgroup of $\GL(V)$ generated by the elements $I + tE_{\alpha \beta}$ for which $t \in K$, $\alpha \prec \beta$ and $\alpha \nsucc \beta$.
	
	The relation $\prec$ also gives an equivalence relation $\sim$ on $S$ by $\alpha \sim \beta \, \Leftrightarrow \, \alpha \prec \beta \land \alpha \succ \beta$. Note that this equivalence relation can also be defined by saying $\alpha$ and $\beta$ are equivalent if and only if their transposition is a graph automorphism. We refer to the equivalence classes as the \textit{coherent components} of $\G$ and write the set of coherent components as $\Lambda = S/\sim$. 
	
	By modding out this equivalence relation, one gets a quotient graph $\overline{\G} = (\Lambda, \overline{E}, \Phi)$ where
	\[\overline{E} = \{\{\lambda, \mu\} \mid \lambda, \mu \in \Lambda, \exists \alpha \in \lambda, \beta \in \mu: \, \{\alpha, \beta\} \in E \}\}\]
	is the set of edges of the quotient graph and $\Psi:\Lambda \to \N: \lambda \mapsto |\lambda |$ is the weight function on the vertices. The automorphisms of the quotient graph are the bijections $\varphi:\Lambda \to \Lambda$ such that $\varphi(e) \in \overline{E}$ for any $e \in \overline{E}$ and $\Psi \circ \varphi = \Psi$. It is clear that there is a natural projection morphism $\Aut(\G) \to \Aut(\overline{\G}): \theta \mapsto \overline{\theta}$. If we order the vertices in each coherent component as $\lambda = \{ v_{\lambda 1}, v_{\lambda 2}, \ldots , v_{\lambda |\lambda|}\}$, we can also define a morphism
	\begin{equation}
		r: \Aut(\overline{\G}) \to \Aut(\G)
	\end{equation} by $r(\varphi)(v_{\lambda j}) = v_{\varphi(\lambda) j}$ for any $\varphi \in \Aut(\overline{\G})$, $\lambda \in \Lambda$ and $j \in \{1, \ldots  , |\lambda|\}$. This morphism satisfies $\overline{r(\varphi)} = \varphi$ for any $\varphi \in \Aut(\overline{\G})$ and $r(\varphi)|_\lambda = \Id_\lambda$ for any $\lambda \in \Lambda$ which is a fixed point of $\varphi$. We fix this ordering of the vertices and this morphism $r$ for the remainder of the paper.
	
	For any permutation $\theta:S \to S$, we write $P(\theta) \in \GL(V)$ for the linear map determined by $P(\theta)(\alpha) = \theta(\alpha)$ for any vertex $\alpha \in S$. This gives a morphism $P:\Aut(\G) \to \GL(V)$. Composing this map with the morphism $r$ from above, we get a morphism $\overline{P} = P \circ r : \Aut(\overline{\G}) \to \GL(V)$. For any $\lambda \in \Lambda$, write $V_\lambda$ for the vector subspace of $V$ spanned by the vertices in $\lambda$. We let $\GL(V_\lambda)$ denote the subgroup of $\GL(V)$ of all invertible linear maps which fix the vertices which do not lie in $\lambda$.
	
	As proven in \cite{dm21-1,dw22-1}, the group $G = p(T_{\G, c})$ is equal to
	\[ G =  M \cdot \left(\prod_{\lambda \in \Lambda} \GL(V_\lambda) \right) \cdot \overline{P}(\Aut(\overline{\G})). \]
	Moreover, the group can be written as an internal semi-direct product \[G \cong M \rtimes \left(\left(\prod_{\lambda \in \Lambda} \GL(V_\lambda) \right)  \rtimes \overline{P}(\Aut(\overline{\G}))\right)\]
	where $\left(\prod_{\lambda \in \Lambda} \GL(V_\lambda) \right)  \rtimes \overline{P}(\Aut(\overline{\G}))$ is reductive and $M$ is the unipotent radical of $G$. We thus also have a projection morphism
	\begin{equation*}
		\label{eq:morphismq}
		q:G \to \Aut(\overline{\G})
	\end{equation*}
	Composing this with the morphism $p$, we get the morphism
	\begin{equation}
		\label{eq:morphismpi}
		\pi := q \circ p: \Aut(\n_{\G, c}^K) \to \Aut(\overline{\G}).
	\end{equation}
	Since $p|_{T_{\G, c}}:T_{\G, c} \to G$ is a group isomorphism, it has a well-defined inverse. If we compose this inverse with the morphism $\overline{P} = P \circ r:\Aut(\overline{\G}) \to G$, we get an injective morphism
	\begin{equation}
		\label{eq:morphismi}
		i = (p|_{T_{\G, c}})^{-1} \circ \overline{P}:\Aut(\overline{\G}) \to T_{\G, c} \leq \Aut(\n_{\G, c}^K).
	\end{equation}
	
	\subsection{The rational forms of $\n_{\G, c}^\C$}
	\label{sec:rationalFormsOfLieAlgGraphs}
	\label{sec:GaloisActionOnLieAlg}
	If $L/K$ is an extension of subfields of $\C$, then there is a natural inclusion $\mathfrak{f}^K(S) \subset \mathfrak{f}^L(S)$ induced by the identity on $S$. This induces an inclusion $\n_{\G, c}^K \subset \n_{\G, c}^L$. In fact $\n_{\G, c}^K$ is a $K$-form of $\n_{\G, c}^L$ where the inclusion $\n_{\G, c}^K \subset \n_{\G, c}^L$ gives a natural isomorphism of Lie algebras associated to graphs
	\[ h:\n_{\G, c}^K \otimes_K L \stackrel{\cong}{\longrightarrow} \n_{\G, c}^L: v \otimes l \mapsto lv. \]
	If $L/K$ is a Galois extension, we also have a natural action of $\Gal(L/K)$ on $\n_{\G, c}^L$ by semi-linear maps defined by $\asi{h(v \otimes l)} = h(v \otimes \sigma(l))$ for any $\sigma \in \Gal(L/K)$, $v \in \n_{\G, c}^K$ and $l \in L$ and extending this additively to all of $\n_{\G, c}^L$. This action also gives an action of $\Gal(L/K)$ on $\Aut(\n_{\G, c}^L)$ by defining for any $f \in \Aut(\n_{\G, c}^L)$, $v \in \n_{\G, c}^L$ and $\sigma \in \Gal(L/K)$:
	\[ \left(\asi{f}\right)(v) = \asi{\left( f\left(\prescript{\sigma^{-1}}{}{v}\right) \right)}. \]
	Note that the subgroup $T_{\G, c} \leq \Aut(\n_{\G, c}^L)$ is invariant under this action and thus that we also get an induced action of $\Gal(L/K)$ on $G$ by using the isomorphism $p|_{T_{\G, c}}:T_{\G, c} \to G$. For these $\Gal(L/K)$ actions, the map $p:\Aut(\n_{\G, c}^L) \to G$ is $\Gal(L/K)$-equivariant, i.e. $p(\asi{f}) = \asi{p(f)}$ for any $f \in \Aut(\n_{\G, c}^L)$ and $\sigma \in \Gal(L/K)$.

	The rational forms of $\n_{\G, c}^\C$ can be described by the following theorem from \cite{dw22-1}.
	\begin{theorem}
		\label{thm:injectiveVersionClassificationRationalForms}
		Let $\G$ be a simple undirected graph and $\n_{\G, c}^\C$ the associated $c$-step nilpotent complex Lie algebra. Up to $\Q$-isomorphism, all rational forms of $\n_{\G,c}^\C$ are given by
		\[ \n_{\rho, c}^\Q = \{ v \in \n^L_{\G, c} \mid \forall \sigma \in \Gal(L/\Q): i(\rho_\sigma)(\asi{v}) = v \} \]
		where $L/\Q$ is a finite Galois extension and $\rho:\Gal(L/\Q) \to \Aut(\overline{\G})$ is an injective group morphism. If $K/\Q$ is another finite Galois extension with an injective group morphism $\eta:\Gal(K/\Q) \to \Aut(\overline{\G})$, then $\n_{\rho, c}^\Q$ and $\n_{\eta, c}^\Q$ are isomorphic if and only if $L = K$ and there exists a $\varphi \in \Aut(\overline{\G})$ such that $\varphi \rho(\sigma) \varphi^{-1} = \eta(\sigma)$ for all $\sigma \in \Gal(L/\Q)$. The rational forms of $\n^\R_{\G, c}$ are exactly those $\n_{\rho, c}^\Q$ for which $L \subset \R$.
	\end{theorem}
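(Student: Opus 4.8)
The plan is to prove the statement by Galois descent. Every rational form of $\n_{\G,c}^\C$ is already defined over $\Qbar$: an isomorphism $\mathfrak{a}\otimes_\Q\C\to\n_{\G,c}^\C$ is a $\C$-point of the $\Q$-variety of Lie algebra isomorphisms, which (being nonempty) therefore also has a $\Qbar$-point, so $\mathfrak{a}\otimes_\Q L\cong\n_{\G,c}^L$ for some finite Galois extension $L/\Q$ and $\mathfrak{a}$ is an $L/\Q$-form of $\n_{\G,c}^\Q$. By the usual twisted-form construction, the $\Q$-isomorphism classes of such forms are in bijection with the non-abelian cohomology set $H^1(\Gal(L/\Q),\Aut(\n_{\G,c}^L))$ for the Galois action on $\Aut(\n_{\G,c}^L)$ recalled in Section~\ref{sec:GaloisActionOnLieAlg}, and the form attached to a cocycle $\sigma\mapsto c_\sigma$ is the fixed subalgebra $\{v\in\n_{\G,c}^L\mid c_\sigma(\asi{v})=v \text{ for all }\sigma\}$. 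So the classification will follow once this cohomology set is identified, and the union over all finite Galois $L$ accounts for all rational forms.

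The core computation uses the split exact sequence $1\to\ker\pi\to\Aut(\n_{\G,c}^L)\xrightarrow{\pi}\Aut(\overline{\G})\to1$, split by the morphism $i$. Through the isomorphism $p$, the group $\ker\pi$ is an iterated extension of a unipotent radical (the kernel of $p$ together with $M$) by the reductive factor $\prod_{\lambda}\GL(V_\lambda)$, and the same holds after twisting by any cocycle. In characteristic zero the normal basis theorem makes $L$ a cohomologically trivial $\Q[\Gal(L/\Q)]$-module, which kills $H^1$ of all the unipotent layers; Hilbert~90, in the form $H^1(\Gal(L/\Q),\GL_m(D\otimes_\Q L))=1$ for a $\Q$-division algebra $D$, kills $H^1$ of the reductive factor of every twist; a d\'evissage then shows that $H^1$ of $\ker\pi$ and of each of its cocycle twists is trivial. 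Consequently every fibre of the induced map $H^1(\Gal(L/\Q),\Aut(\n_{\G,c}^L))\to H^1(\Gal(L/\Q),\Aut(\overline{\G}))$ is a single point, while the section $i$ makes the map surjective, so it is a bijection. Because the Galois action on $\Aut(\overline{\G})$ is induced from the one on $G$ through the section $\overline{P}$, whose image consists of $0/1$ permutation matrices, this action is trivial; hence $H^1(\Gal(L/\Q),\Aut(\overline{\G}))$ is the set of conjugacy classes of group homomorphisms $\rho:\Gal(L/\Q)\to\Aut(\overline{\G})$, each such $\rho$ is automatically a cocycle, and the cocycle $\sigma\mapsto i(\rho_\sigma)$ has fixed subalgebra exactly $\n_{\rho,c}^\Q$.

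It remains to reduce to injective $\rho$ and to read off the last two assertions. Given any $\rho:\Gal(L/\Q)\to\Aut(\overline{\G})$, put $L'=L^{\ker\rho}$ and let $\bar\rho:\Gal(L'/\Q)\hookrightarrow\Aut(\overline{\G})$ be the induced injective morphism; a short check gives $\n_{\rho,c}^\Q=\n_{\bar\rho,c}^\Q$, since the conditions indexed by $\ker\rho$ force a fixed vector to lie in $\n_{\G,c}^{L'}$ and the remaining conditions depend only on the class of $\sigma$ in $\Gal(L'/\Q)$. Thus every rational form equals $\n_{\bar\rho,c}^\Q$ for an injective $\bar\rho$; such a form corresponds to a continuous homomorphism $\Gal(\Qbar/\Q)\to\Aut(\overline{\G})$ with kernel $\Gal(\Qbar/L')$, and since conjugation preserves this kernel, $L'$ is an isomorphism invariant of the form, while for fixed $L'$ two injective morphisms yield $\Q$-isomorphic forms exactly when they are conjugate in $\Aut(\overline{\G})$ — this is the stated criterion. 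Finally, if $L\subset\R$ then $\n_{\rho,c}^\Q\otimes_\Q\R=(\n_{\rho,c}^\Q\otimes_\Q L)\otimes_L\R=\n_{\G,c}^L\otimes_L\R=\n_{\G,c}^\R$, so $\n_{\rho,c}^\Q$ is a rational form of $\n_{\G,c}^\R$; if instead $L\not\subset\R$ and $\rho$ is injective, restricting the associated class to $\Gal(\C/\R)=\langle\tau\rangle$ gives the conjugacy class of $\rho_{\tau|_L}\neq\Id$, which is nontrivial in $H^1(\Gal(\C/\R),\Aut(\overline{\G}))$, so $\n_{\rho,c}^\Q\otimes_\Q\R$ is a nonsplit real form and hence not isomorphic to $\n_{\G,c}^\R$.

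The main obstacle is the cohomology computation in the second paragraph: one needs that not merely $\ker\pi$ but every cocycle twist of it has vanishing $H^1$, so that the whole of $H^1(\Gal(L/\Q),\Aut(\n_{\G,c}^L))$ collapses onto conjugacy classes of homomorphisms into $\Aut(\overline{\G})$. The d\'evissage through the two non-abelian extensions, together with Hilbert~90 and the cohomological triviality of $L$, is where the real work lies; the reduction to injective $\rho$, the isomorphism criterion, and the description of the real forms are then formal.
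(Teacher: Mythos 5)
The paper does not contain its own proof of this theorem --- it is quoted from the authors' earlier work \cite{dw22-1} --- so there is no in-text argument to compare yours against. Taken on its own, your Galois-descent sketch is the natural route and looks essentially correct: you reduce the classification of rational forms to (the union over finite Galois $L/\Q$ of) $H^1(\Gal(L/\Q),\Aut(\n_{\G,c}^L))$, exploit the split exact sequence given by $\pi$ and its section $i$ together with the observation that the Galois action on $\Aut(\overline{\G})$ is trivial because $\overline{P}$ has $\Q$-rational image, and collapse the fibres by combining vanishing of $H^1$ on the unipotent layers (cohomological triviality in characteristic zero) with Hilbert~90 and Shapiro's lemma on the twisted reductive quotient. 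The reduction to injective $\rho$ via $L'=L^{\ker\rho}$, the isomorphism criterion read off from equality of kernels together with conjugacy in $\Aut(\overline{\G})$, and the real/complex dichotomy via restriction to $\Gal(\C/\R)$ all read correctly.

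The one place that wants a more explicit argument --- and you flag this yourself as ``where the real work lies'' --- is the twisted d\'evissage. When you twist by a cocycle $a_\sigma = i(\rho_\sigma)\,c_\sigma$ with $c_\sigma\in\ker\pi$, you need the twist of the reductive quotient $\prod_\lambda\GL(V_\lambda)$ to be governed solely by the $\rho$-permutation of the factors, so that the resulting $\Q$-form is a product of Weil restrictions of general linear groups and the relevant $H^1$ vanishes by Shapiro and ordinary Hilbert~90. This is where you must argue that $c_\sigma$ maps trivially into $\Out\bigl(\prod_\lambda\GL(V_\lambda)\bigr)$: the $\ker\pi$-part is built from the unipotent radical and inner automorphisms of the reductive factor, so it cannot introduce inverse-transpose or central-simple-algebra twists, which is what rules out unitary groups or $\GL_m(D)$ appearing. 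Once that is pinned down, the remainder of the sketch goes through as you describe.
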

	
	As the eigenvalues of an Anosov diffeomorphism on the rational form $\n_{\rho, c}^\Q$ do not necessarily lie in the finite Galois extension $L$, we will also use the algebraic closure $\Qbar$ of $\Q$ in section \ref{sec:reductionToAlgInt}. Note that an injective morphisms $\rho:\Gal(L/\Q) \to \Aut(\overline{\G})$ can always be extended to a morphism with domain $\Gal(\Qbar/\Q)$ by precomposing it with the morphism $\Gal(\Qbar/\Q)\to \Gal(L/\Q):\sigma \mapsto \sigma|_L$. Let $\overline{\rho}:\Gal(\Qbar/\Q) \to \Aut(\overline{\G})$ denote this extended morphism. It is continuous with respect to the Krull topology on $\Gal(\Qbar/\Q)$ and the discrete topology on $\Aut(\overline{\G})$. These will always be the topologies considered on these groups. To this morphism $\overline{\rho}$ (and in fact to any continuous morphism from $\Gal(\Qbar/\Q)$ to $\Aut(\overline{\G})$), one can again associate a rational form as a subset of $\n_{\G, c}^{\Qbar}$ by
	\[ \n_{\overline{\rho}, c}^\Q = \{ v \in \n_{\G, c}^{\Qbar} \mid \forall \sigma \in \Gal(\Qbar/\Q): \, i(\overline{\rho}_\sigma)(\asi{v}) = v \}.\]
	This rational form is isomorphic to the original one $\n_{\rho, c}^\Q \subset \n_{\G, c}^L$. The following lemma will be useful in section \ref{sec:reductionToAlgInt} for finding automorphisms of rational forms.
	\begin{lemma}
		\label{lem:autosOfRatForm}
		Let $\rho:\Gal(\Qbar/\Q) \to \Aut(\overline{\G})$ be a continuous group morphism. Let $\n_{\rho, c}^\Q \subset \n_{\G, c}^{\Qbar}$ be the associated rational form. An automorphism $f \in \Aut(\n_{\G, c}^{\Qbar})$ satisfies $f(\n_{\rho, c}^\Q) = \n_{\rho, c}^\Q$ if and only if
		\[ i(\rho_\sigma^{-1}) \, f \,i(\rho_\sigma) = \asi{f} \]
		for all $\sigma \in \Gal(\Qbar/\Q)$.
	\end{lemma}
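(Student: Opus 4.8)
The plan is to unwind the definition of the rational form $\n_{\rho,c}^\Q$ and use the semilinearity of the Galois action. Recall $\n_{\rho,c}^\Q = \{ v \in \n_{\G,c}^{\Qbar} \mid \forall \sigma \in \Gal(\Qbar/\Q): i(\rho_\sigma)(\asi{v}) = v \}$. So for $f \in \Aut(\n_{\G,c}^{\Qbar})$, the condition $f(\n_{\rho,c}^\Q) = \n_{\rho,c}^\Q$ should translate, via this defining equation, into a relation between $f$, its Galois conjugates $\asi{f}$, and the automorphisms $i(\rho_\sigma)$. First I would recall from the preliminaries that the action of $\Gal(\Qbar/\Q)$ on $\Aut(\n_{\G,c}^{\Qbar})$ is defined by $(\asi{f})(v) = \asi{(f(\prescript{\sigma^{-1}}{}{v}))}$, equivalently $\asi{(f(w))} = (\asi{f})(\asi{w})$ for all $w$; I will use this identity repeatedly.

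The key computation is the following. Suppose $v \in \n_{\rho,c}^\Q$, so $i(\rho_\sigma)(\asi{v}) = v$ for all $\sigma$. I want to test when $f(v) \in \n_{\rho,c}^\Q$, i.e.\ when $i(\rho_\sigma)(\asi{(f(v))}) = f(v)$ for all $\sigma$. Using $\asi{(f(v))} = (\asi{f})(\asi{v})$, this reads $i(\rho_\sigma)\,(\asi{f})\,(\asi{v}) = f(v)$. Now substitute $\asi{v} = i(\rho_\sigma^{-1})(v)$ (which is the same defining relation rewritten, since $i$ is a group morphism so $i(\rho_\sigma)^{-1} = i(\rho_\sigma^{-1})$): the left side becomes $i(\rho_\sigma)\,(\asi{f})\,i(\rho_\sigma^{-1})(v)$. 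So $f(v) \in \n_{\rho,c}^\Q$ for all $v \in \n_{\rho,c}^\Q$ if and only if $i(\rho_\sigma)\,(\asi{f})\,i(\rho_\sigma^{-1})$ and $f$ agree on the $\Q$-subspace $\n_{\rho,c}^\Q$. Since $\n_{\rho,c}^\Q$ is a rational form, it spans $\n_{\G,c}^{\Qbar}$ over $\Qbar$, and both maps are $\Qbar$-linear (here one must be a little careful: $\asi{f}$ is $\Qbar$-linear even though the Galois action itself is only semilinear, because conjugating an automorphism by semilinear maps on both sides gives back a linear map — this is exactly why the action on $\Aut$ is defined the way it is), so agreement on $\n_{\rho,c}^\Q$ forces equality as maps on all of $\n_{\G,c}^{\Qbar}$. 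Thus $f(\n_{\rho,c}^\Q) \subseteq \n_{\rho,c}^\Q$ iff $i(\rho_\sigma)\,(\asi{f})\,i(\rho_\sigma^{-1}) = f$ for all $\sigma$, which rearranges to $i(\rho_\sigma^{-1})\,f\,i(\rho_\sigma) = \asi{f}$.

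Finally I must promote the inclusion to an equality $f(\n_{\rho,c}^\Q) = \n_{\rho,c}^\Q$. The cleanest way is to observe the relation $i(\rho_\sigma^{-1})\,f\,i(\rho_\sigma) = \asi{f}$ is symmetric enough that applying the same argument to $f^{-1}$ works: conjugating the identity by $i(\rho_\sigma)$-sandwich and inverting gives $i(\rho_\sigma^{-1})\,f^{-1}\,i(\rho_\sigma) = (\asi{f})^{-1} = \asi{(f^{-1})}$ (using that $\sigma \mapsto \asi{(\cdot)}$ is a group action, hence respects inverses), so by the inclusion direction already proved, $f^{-1}(\n_{\rho,c}^\Q) \subseteq \n_{\rho,c}^\Q$, and combining the two inclusions yields equality. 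Conversely, if $f(\n_{\rho,c}^\Q) = \n_{\rho,c}^\Q$ then in particular $f(\n_{\rho,c}^\Q) \subseteq \n_{\rho,c}^\Q$, so the computation above gives the stated identity directly.

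The only genuinely delicate point — the part I would be most careful writing up — is the linearity/spanning step: justifying that $\asi{f}$ is $\Qbar$-linear and that two $\Qbar$-linear endomorphisms of $\n_{\G,c}^{\Qbar}$ agreeing on the $\Q$-form $\n_{\rho,c}^\Q$ must be equal. The latter is immediate once one knows $\n_{\rho,c}^\Q \otimes_\Q \Qbar \cong \n_{\G,c}^{\Qbar}$, which is the content of $\n_{\rho,c}^\Q$ being a rational form; everything else is a mechanical manipulation of the two compatible group actions of $\Gal(\Qbar/\Q)$.
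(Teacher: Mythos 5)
Your proof is correct and follows essentially the same route as the paper: unwind the defining condition of $\n_{\rho,c}^\Q$, use semilinearity of the Galois action on automorphisms to move $\asi{(\cdot)}$ past $f$, substitute $\asi{v} = i(\rho_\sigma^{-1})(v)$ on the invariant subspace, and then pass from agreement of two $\Qbar$-linear maps on the form to equality on all of $\n_{\G,c}^{\Qbar}$ because a $\Q$-basis of the form is a $\Qbar$-basis of the ambient algebra. The one minor difference is cosmetic: to upgrade $f(\n_{\rho,c}^\Q)\subseteq\n_{\rho,c}^\Q$ to equality, the paper invokes finite-dimensionality (an injective $\Q$-endomorphism of a finite-dimensional $\Q$-space is onto), whereas you derive the identity for $f^{-1}$ from the one for $f$ and apply the inclusion direction twice; both are fine and roughly the same amount of work.
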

	\begin{proof}
		Let $f \in \Aut(\n_{\G, c}^{\Qbar})$. Note that the condition $f(\n_{\rho, c}^\Q) = \n_{\rho, c}^\Q$ is equivalent to $f(\n_{\rho, c}^\Q) \subset \n_{\rho, c}^\Q$. This follows from the fact that $f$ is also a $\Q$-linear bijection and the fact that $\n_{\rho, c}^\Q$ is finite dimensional. We thus have that $f(\n_{\rho, c}^\Q) = \n_{\rho, c}^\Q$ if and only if $\forall v \in \n_{\rho, c}^\Q, \, \sigma \in \Gal(\Qbar/\Q): i(\rho_\sigma)\left( \asi{\left(f(v)\right)} \right) = f(v)$. This last equality can be rewritten as
		\begin{align*}
			i(\rho_\sigma)\left( \asi{\left(f(v)\right)} \right) = f(v) &\Leftrightarrow (f^{-1} \, i(\rho_\sigma) \, \asi{f}) (\asi{v}) = v\\
			&\Leftrightarrow (f^{-1} \, i(\rho_\sigma) \, \asi{f} \, i(\rho_\sigma^{-1}) \, i(\rho_\sigma) )(\asi{v}) = v\\
			&\Leftrightarrow (f^{-1} \, i(\rho_\sigma) \, \asi{f} \, i(\rho_\sigma^{-1}))(v) = v\\
		\end{align*}
	Thus, we find that $f(\n_{\rho, c}^\Q) = \n_{\rho, c}^\Q$ if and only if the automorphism $f^{-1} \, i(\rho_\sigma) \, \asi{f} \, i(\rho_\sigma^{-1}) \in \Aut(\n_{\G, c}^{\Qbar})$ is the identity on the rational form $\n_{\rho, c}^\Q$ for any $\sigma \in \Gal(\Qbar/\Q)$. But since $\n_{\rho, c}^\Q$ is a form, it has a basis which is also a basis for $\n_{\G, c}^{\Qbar}$. By consequence we find that $f(\n_{\rho, c}^\Q) = \n_{\rho, c}^\Q$ if and only if $i(\rho_\sigma^{-1}) \, f \,i(\rho_\sigma) = \asi{f}$ for all $\sigma \in \Gal(\Qbar/\Q)$.
	\end{proof}

	\section{Reduction to algebraic integers}
	
	Before proving Theorem \ref{thm:AnosovOrbitCondInjective}, we first need a technical result which relates the existence of an Anosov automorphism to having certain algebraic units for every vertex of our graph. A first step is to determine the eigenvalues of certain automorphisms of $\n_{\G,c}^K$ depending on the structure of the graph. Later, we will apply this to relate Anosov automorphisms to the existence of certain algebraic units.
	
	\subsection{The eigenvalues of vertex diagonal automorphisms of $\n_{\G, c}^K$}
	\label{sec:eigenvaluesVertexDiagAutos}
	Let $\G = (S, E)$ be a graph, $K \subset \C$ a field and $\n_{\G, c}^K$ the associated $c$-step nilpotent Lie algebra over $K$. We say an automorphism $f \in \Aut(\n_{\G, c}^K)$ is \textit{vertex diagonal} if there exists a map $\Psi:S \to K$ such that $f(\alpha) = \Psi(\alpha)\alpha$ for any vertex $\alpha \in S$. In fact, for the Lie algebras $\n_{\G, c}^K$ it holds that for any map $\Psi:S \to K \setminus \{0\}$ there exists a unique vertex diagonal automorphism $f_\Psi \in \Aut(\n_{\G, c}^K)$ with $f_\Psi(\alpha) = \Psi(\alpha) \alpha$ for all $\alpha \in S$. We say that $f_\Psi$ is the vertex diagonal automorphism determined by $\Psi$.

	Given a map $\Psi: S \to K$ we will give in this section a description of all eigenvalues of the vertex-diagonal automorphism $f_{\Psi} \in \Aut(\n_{\G, c}^K)$. This description will depend on whether certain subsets of the graph are connected. Recall that $\{S_1, S_2,\ldots, S_n\}$ is a \textit{partition} of $S$ if all $S_i$ are non-empty, $S_i \cap S_j = \emptyset$ for any $i \neq j$ and $S = \bigcup_{i = 1}^n S_i$.
	
	\begin{definition}
		\label{def:connectedGraph}
		A graph $\G = (S, E)$ is called \textit{connected} if for any partition $\{S_1, S_2\}$ of $S$, there exist vertices $\alpha_1 \in S_1$ and $\alpha_2 \in S_2$ such that $\{\alpha_1 , \alpha_2\} \in E$. If a graph is not connected we simply say it is \textit{disconnected}. A subset of vertices $A \subset S$ will be called \textit{(dis)connected} if the subgraph spanned by $A$ is (dis)connected. A subset of coherent components $B \subset \Lambda$ will be called \textit{(dis)connected} if the union $\bigcup_{\lambda \in B} \lambda$ is (dis)connected.
	\end{definition}
	
	In what follows, we will use the notion of \textit{Lyndon elements} which is a generalization of the so called \textit{Lyndon words} to the partially commutative setting.  We refer to \cite{wade15-1} for a more detailed overview. This paper summarizes results from Droms, Duchamp, Krob,
	and Lalonde \cite{drom83-1} \cite{lalo91-1} \cite{dk92-1} \cite{kl93-1}. The reader should be careful since there are two different conventions when using graphs to define an algebraic object. We are using the convention that two non-adjacent vertices commute in the associated algebraic structure, as is common when considering Lie algebras, see \cite{dm05-1}. Note that under this convention, the vertices and edges of the graph form a basis of the Lie algebra in the $2$-step nilpotent case. However, the opposite convention is also often used in the literature when considering groups, as for example in \cite{wade15-1}.

	We denote by $W(S)$ the set of all words with letters in $S$, including the empty word which we write with $\emptyset$. For two words $w_1, w_2$, we write $w_1 \leftrightarrow w_2$ if there exist words $u_1, u_2$ and vertices $\alpha, \beta$ with $\{\alpha, \beta\} \notin E$ such that $w_1 = u_1 \alpha \beta u_2$ and $w_2 = u_1 \beta \alpha u_2$. We then define an equivalence relation $\sim$ on $W(S)$ by
	\[w_1 \sim w_2 \iff \exists k \in \N, u_1, \ldots ,u_k \in W(S): w_1 \leftrightarrow u_1 \leftrightarrow \ldots \leftrightarrow u_k \leftrightarrow w_2. \]
	The equivalence class of a word $w$ will be denoted by $\overline{w}$ and the set of all equivalence classes is denoted by $M := W(S) / \sim$. Given an order on $S$, we give $W(S)$ the lexicographical order. We then denote for any $m \in M$ by $\std(m)$ the maximal element of $m$ with respect to this lexicographical order on $W(S)$. This maximum exists because the equivalence class $m$ is a finite set.
	
	We define the \textit{length of a word} $w \in W(S)$ as the number of letters making up $w$ and denote this number with $|w|$. The \textit{weight of a word} $w \in W(S)$ is defined as the map $e_w:S \to \Z^{\geq 0}$ which assigns to $\alpha \in S$ the number of times it occurs in $w$. Note that these notions descend nicely to the equivalence classes in $M$ where the length of an element $m = \overline{w} \in M$ is defined as $|m| := |w|$ and the weight of $m$ is defined as $e_m := e_w$.
	
	To introduce the Lyndon words and Lyndon elements we also need the notions of conjugacy classes of words and primitive words. The \textit{conjugacy class} of a word $w \in W(S)$ is the set of all words $v \in W(S)$ such that there exist words $u_1, u_2 \in W(S)$ with $w = u_1 u_2$ and $v = u_2 u_1$. We say a word $w$ is \textit{primitive} if there exist no non-empty words $u_1, u_2 \in W(S)$ such that $w = u_1 u_2 = u_2 u_1$.
	
	\begin{definition}
		We say a word $w \in W(S)$ is a \textit{Lyndon word} if it is not the empty word, it is primitive and it is minimal in its conjugacy class. An element $m \in M$ is then called a \textit{Lyndon element} if $\std(m)$ is a Lyndon word. The set of Lyndon elements of $M$ is denoted by $\LE(M)$.
	\end{definition}
	
	Note that this is not the original definition of a Lyndon element, but we use the characterization from \cite[Theorem 5.12]{wade15-1}. In what follows we will determine the possible weights of Lyndon elements, thus giving useful information about the eigenvalues of vertex diagonal automorphisms of $\n_{\G,c}^L$. The following lemma will be crucial for finding Lyndon elements from connected subgraphs.
	
	\begin{lemma}
		\label{lem:existenceLyndonWord}
		Let $\G = (S, E)$ be a graph with an order on $S$ and $\alpha_1, \ldots, \alpha_k$ distinct elements of $S$ with $k \geq 2$. If $\{\alpha_1, \ldots, \alpha_k \}$ is connected, then there exists a permutation $\sigma \in S_k$ such that
		\[ \overline{\alpha_{\sigma(1)}^{e_1} \alpha_{\sigma(2)}^{e_2} \ldots \alpha_{ \sigma(k)}^{e_k}} \]
		is a Lyndon element with respect to the ordering on $S$ for any positive integers $e_1, \ldots, e_k > 0$ . 
	\end{lemma}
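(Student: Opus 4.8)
The plan is to construct the permutation $\sigma$ from a \emph{connected enumeration} of the $k$ vertices rooted at the smallest one, and then to show that for every choice of positive exponents the resulting word is automatically a Lyndon word. First I would use the hypothesis that $\{\alpha_1,\ldots,\alpha_k\}$ is connected to produce an enumeration $\beta_1,\beta_2,\ldots,\beta_k$ of $\{\alpha_1,\ldots,\alpha_k\}$ with $\beta_1=\min\{\alpha_1,\ldots,\alpha_k\}$ in the fixed order on $S$, and with the property that for every $j\geq 2$ the vertex $\beta_j$ is $\G$-adjacent to some $\beta_i$ with $i<j$: start with $\beta_1$, and at each stage adjoin a vertex of $\{\alpha_1,\ldots,\alpha_k\}$ not yet listed that has a neighbour among the ones already listed — this is possible exactly because $\{\alpha_1,\ldots,\alpha_k\}$ is connected and the set of listed vertices is a non-empty proper subset of it. Define $\sigma\in S_k$ by $\alpha_{\sigma(j)}=\beta_j$; note that $\sigma$ is chosen independently of the exponents.

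Now fix positive integers $e_1,\ldots,e_k$, set $w=\beta_1^{e_1}\beta_2^{e_2}\cdots\beta_k^{e_k}$, and let $m=\overline{w}$. The crucial observation is that an elementary move $\leftrightarrow$ transposes two letters only when their vertices are \emph{non}-adjacent in $\G$; hence $\sim$ preserves, inside a word, the relative order of any two occurrences coming from a pair of $\G$-adjacent vertices. Since in $w$ every occurrence of $\beta_i$ precedes every occurrence of $\beta_j$ whenever $i<j$, we conclude: $(\ast)$ whenever $\beta_i$ and $\beta_j$ are $\G$-adjacent with $i<j$, all occurrences of $\beta_i$ precede all occurrences of $\beta_j$ in every word of $m$. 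A strong induction on $j\in\{2,\ldots,k\}$ now shows that in every word of $m$ all $e_1$ occurrences of $\beta_1$ precede every occurrence of $\beta_j$: pick $i<j$ with $\beta_i$ $\G$-adjacent to $\beta_j$; if $i=1$ this is $(\ast)$, and if $i\geq 2$ it follows by chaining the inductive hypothesis for $i$ with $(\ast)$. Consequently every word of $m$ has the shape $\beta_1^{e_1}u$ with $u$ a word over $\{\beta_2,\ldots,\beta_k\}$; in particular $\std(m)$, being itself a word of $m$, equals $\beta_1^{e_1}v$ where $v$ is a non-empty word (it contains $\beta_2,\ldots,\beta_k$, and $k\geq 2$) not involving the letter $\beta_1$.

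It then remains to check that $u_0:=\beta_1^{e_1}v$ is a Lyndon word. It is non-empty, and since $\beta_1=\min\{\beta_1,\ldots,\beta_k\}$ is strictly smaller than every letter occurring in $v$ while $v\neq\emptyset$, a direct comparison shows $u_0$ is strictly smaller than each of its proper cyclic rotations: any rotation whose first letter lies in $v$ is already larger in the first position, and a rotation by $r$ with $1\leq r\leq e_1-1$ agrees with $u_0$ along an initial run of $\beta_1$'s and then has a letter of $v$ at a position where $u_0$ still has $\beta_1$. Being strictly smaller than all of its proper rotations, $u_0$ is primitive and is the unique minimum of its conjugacy class, hence a Lyndon word; therefore $m$ is a Lyndon element and $\sigma$ has the required property. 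The one place where the connectedness hypothesis is genuinely used is the normal form $\std(m)=\beta_1^{e_1}v$: without the connected‑enumeration property a larger letter can be commuted in front of the whole block of $\beta_1$'s (for instance, for the path $\alpha_1-\alpha_2-\alpha_3$ with $\alpha_1<\alpha_2<\alpha_3$ the order $\alpha_1,\alpha_3,\alpha_2$ fails, since $\alpha_3$ is non-adjacent to $\alpha_1$ and surfaces in front), and it is exactly the chain of adjacencies back to $\beta_1$ that blocks this.
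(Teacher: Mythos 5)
Your proposal is correct and follows essentially the same route as the paper: build $\sigma$ from a connected enumeration starting at the minimal vertex, observe that every word in the equivalence class then has the normal form $\beta_1^{e_1}u$ with $u$ over the remaining (strictly larger) letters, and deduce that $\std(m)$ is primitive and minimal in its conjugacy class, hence a Lyndon word. The only difference is that you spell out in detail the step the paper leaves to the reader (that commutation moves never swap occurrences of graph-adjacent letters, plus the chaining induction and the rotation comparison), which is a faithful elaboration rather than a different argument.
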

	\begin{proof}
		First we construct the permutation $\sigma$. Define $\sigma(1)$ by $\alpha_{\sigma(1)} = \min\{\alpha_1, \ldots, \alpha_k\}$. Then define the other images of $\sigma$ inductively by choosing for $\sigma(i)$ an element from $\{ 1, \ldots, k \} \setminus \{ \sigma(1), \ldots, \sigma(i-1) \}$ such that $\{\alpha_{\sigma(1)}, \ldots, \alpha_{\sigma(i)}\}$ is connected. As one can check, this is always possible by using the assumption that $\{\alpha_1, \ldots, \alpha_k\}$ is connected.
		
		Next, we prove that $m = \overline{\alpha_{\sigma(1)}^{e_1} \alpha_{\sigma(2)}^{e_2} \ldots \alpha_{ \sigma(k)}^{e_k}}$ is a Lyndon element. First note that by the way we constructed $\sigma$, it follows that $\std(m) = \alpha_{\sigma(1)}^{e_1} w$ for some word $w$ in the letters $\alpha_{\sigma(2)}, \ldots , \alpha_{\sigma(k)}$. Since $\alpha_{\sigma(1)} < \alpha_{\sigma(i)}$ for all $2 \leq i \leq k$, it follows $\std(m)$ is minimal in its conjugacy class. To see $\std(m)$ is primitive, we use the assumption that $k \geq 2$ and thus that $w$ is not the empty word. This proves that $\std(m)$ is a Lyndon word and by consequence that $m$ is a Lyndon element.
	\end{proof}
	
	For a function $f:X \to \Z^{\geq 0}$ we let $\supp(f)$ denote the \textit{support of $f$} which is defined by
	\[\supp(f) = \{ x \in X \mid f(x) \neq 0 \}.\]
	
	\begin{lemma}
		\label{lem:nonExistenceLyndonWord}
		Let $\G = (S, E)$ be a graph with an order on $S$ and $w \in W(S)$. If $\supp(e_w)$ is disconnected, then the element $\overline{w}$ is not a Lyndon element.
	\end{lemma}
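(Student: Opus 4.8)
The statement to prove is Lemma \ref{lem:nonExistenceLyndonWord}: if $\supp(e_w)$ is disconnected, then $\overline{w}$ is not a Lyndon element. The plan is to argue by contraposition on the structure of the standard representative $\std(\overline w)$. Suppose $\supp(e_w) = A_1 \sqcup A_2$ with $A_1, A_2$ non-empty and no edges between $A_1$ and $A_2$ in $\G$. Every letter of $A_1$ commutes (in the partially commutative sense) with every letter of $A_2$, so using the moves $\leftrightarrow$ we can sort any word $w$ in these letters into a word of the form $u_1 u_2$ where $u_1$ uses only letters from $A_1$ and $u_2$ only letters from $A_2$; in particular $w \sim u_1 u_2$ for such $u_1, u_2$, both non-empty since both $A_i$ are non-empty.

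\textbf{Key steps.} First I would record the elementary fact that if $\alpha \in A_1$, $\beta \in A_2$ then $\{\alpha,\beta\} \notin E$, hence in any word we may transpose adjacent occurrences of such letters; iterating, $\overline{w}$ contains a representative of the "separated" form $u_1 u_2$ with $\supp(e_{u_1}) = A_1$, $\supp(e_{u_2}) = A_2$. Second, I would look at $\std(\overline w)$, the lexicographically maximal word in the class. Since it is maximal and it is equivalent to $u_1 u_2$, and since letters of $A_1$ and $A_2$ can be freely interchanged when adjacent, one shows that in $\std(\overline w)$ all letters lying in the block containing the maximal letter of $\supp(e_w)$ must be pushed as far left as the commutation relations permit — concretely, writing $m = \max \supp(e_w)$ and letting $A_i$ be the part containing $m$, I claim $\std(\overline w)$ begins with all the $A_i$-letters grouped to the left, i.e. $\std(\overline w) = v_i v_j$ where $v_i$ is a word in the $A_i$-letters and $v_j$ in the $A_j$-letters ($\{i,j\}=\{1,2\}$), because any $A_j$-letter occurring before an $A_i$-letter could be swapped to produce a lexicographically larger word (the leftmost position where they differ would then carry a letter $\leq m$ versus a letter that can be made larger), contradicting maximality of $\std$. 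Third, with $\std(\overline w) = v_i v_j$ and both $v_i, v_j$ non-empty, I would show $\std(\overline w)$ is \emph{not} a Lyndon word: it is not minimal in its conjugacy class, since the cyclic conjugate $v_j v_i$ is obtained by rotation and — because $v_j$ starts with a letter strictly smaller than $m = \max\supp(e_w)$ while $v_i$ starts with $m$ — we have $v_j v_i < v_i v_j = \std(\overline w)$ lexicographically, so $\std(\overline w)$ is strictly larger than another element of its conjugacy class, hence not minimal. Therefore $\overline w$ is not a Lyndon element.

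\textbf{Main obstacle.} The delicate point is step two: pinning down exactly which rearrangements $\std(\overline w)$ admits and proving rigorously that the maximal representative really has the block form $v_i v_j$ with the $m$-containing block first. One must be careful that, although $A_1$- and $A_2$-letters commute across the partition, the letters \emph{within} each $A_i$ need not all commute, so one cannot freely sort inside a block; the argument must only use cross-partition swaps. The cleanest way is: among all representatives of $\overline w$, the standard one maximizes lex order; given any representative, as long as some $A_j$-letter precedes some $A_i$-letter ($A_i$ the block of $m$), take the leftmost such inversion and swap — this is a legal $\leftrightarrow$ move and strictly increases the word in lex order provided we have arranged (by first swapping within the eventual prefix) that the $A_i$-letter is at least as large; iterating terminates at the block form. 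A careful treatment of this exchange argument, handling the bookkeeping of "leftmost inversion" and confirming each swap is lex-increasing, is the technical heart of the proof; once $\std(\overline w) = v_i v_j$ is established, the non-minimality in the conjugacy class is immediate from comparing first letters.
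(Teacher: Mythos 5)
Your step two contains a genuine gap: it is false in general that $\std(\overline w)$ has the block form $v_i v_j$ with the block containing $\max\supp(e_w)$ first. Take $S = \{a,b,c\}$ with $a < b < c$, the single edge $\{a,c\}$, and $w = abc$. Then $\supp(e_w) = \{a,c\} \sqcup \{b\}$ is disconnected, $\overline{w} = \{abc, \, bac, \, acb\}$, and $\std(\overline w) = bac$, which interleaves the blocks ($b \in A_2$, then $a,c \in A_1$). The exchange argument you sketch to fix this fails precisely where you worry it might: the letter $a$ belongs to the $m$-block but is lexicographically smaller than $b$, so cross-partition swaps can move block-$A_2$ letters to the front and make the word larger, not smaller. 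So you cannot guarantee the block form, and the subsequent comparison $v_j v_i < v_i v_j$ never gets off the ground. (Note that in this example $bac$ is indeed not a Lyndon word — $acb$ is a smaller cyclic rotation — so the lemma's conclusion holds, but your route does not reach it.)

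The paper sidesteps this entirely: from the disconnected partition it produces two representatives $u_1 u_2$ and $u_2 u_1$ of $\overline w$ with different initial letters, and then invokes \cite[Proposition 5.11]{wade15-1}, which (in effect) says a Lyndon element cannot have representatives beginning with distinct letters. That criterion makes no reference to the structure of $\std(\overline w)$, so the lex-ordering subtleties you struggle with never arise. If you want to avoid citing that proposition and argue directly from the definition, you would need a different structural handle on $\std(\overline w)$; simply claiming it separates into blocks is not available to you.
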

	\begin{proof}
		Since $\supp(e_w)$ spans a disconnected subgraph of $\G$, we can find a non-trivial partition $\supp(e_w) = S_1 \sqcup S_2$ such that there is no vertex in $S_1$ which is adjacent to a vertex in $S_2$. It follows that there exist non-empty words $u_1 \in W(S_1), \, u_2 \in W(S_2)$ such that $\overline{w} = \overline{u_1 u_2} = \overline{u_2 u_1}$. Since $S_1$ is disjoint from $S_2$, the words $u_1 u_2$ and $u_2 u_1$ do not have the same initial letter. It follows from \cite[Proposition 5.11]{wade15-1} that $\overline{w}$ is not a Lyndon element.
	\end{proof}

	\begin{prop}
		\label{prop:weightsOfLyndonEl}
		Let $\G = (S, E)$ be a graph with an order on $S$. The set of weights of all Lyndon elements is given by
		\[ \{ e_\alpha \mid \alpha \in S \} \cup \left\{ e:S \to \Z^{\geq 0} \; \middle| \; 2 \leq |\supp(e)|, \; \supp(e) \text{ is connected} \right\}. \]
	\end{prop}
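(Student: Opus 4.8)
The plan is to establish the two inclusions separately, using Lemma~\ref{lem:existenceLyndonWord} for the containment ``$\supseteq$'' and Lemma~\ref{lem:nonExistenceLyndonWord} together with a short primitivity argument for ``$\subseteq$''.

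For ``$\supseteq$'', I would first note that each one-letter word $\alpha$ is primitive and trivially minimal in its conjugacy class, so $\overline{\alpha}$ is a Lyndon element of weight $e_\alpha$; this realizes every element of the first set. Then let $e\colon S\to\Z^{\geq 0}$ satisfy $k:=|\supp(e)|\geq 2$ with $\supp(e)=\{\alpha_1,\dots,\alpha_k\}$ connected. Applying Lemma~\ref{lem:existenceLyndonWord} to $\alpha_1,\dots,\alpha_k$ yields a permutation $\sigma\in S_k$ such that $\overline{\alpha_{\sigma(1)}^{f_1}\cdots\alpha_{\sigma(k)}^{f_k}}$ is a Lyndon element for every choice of positive integers $f_1,\dots,f_k$. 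Choosing $f_i:=e(\alpha_{\sigma(i)})>0$ then produces a Lyndon element whose weight is exactly $e$, which handles the second set.

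For ``$\subseteq$'', let $m\in\LE(M)$, put $e:=e_m$, and set $w:=\std(m)$, a nonempty Lyndon word with $e_w=e$. If $|\supp(e)|=1$ then $w=\alpha^n$ for some $\alpha\in S$ and $n\geq 1$; since $w$ is primitive we must have $n=1$, so $e=e_\alpha$ lies in the first set. If $|\supp(e)|\geq 2$, then $\supp(e)=\supp(e_w)$ cannot be disconnected, for otherwise Lemma~\ref{lem:nonExistenceLyndonWord} would contradict the assumption that $m=\overline{w}$ is a Lyndon element; hence $\supp(e)$ is connected and $e$ lies in the second set. The case $|\supp(e)|=0$ does not occur, since the empty word is not a Lyndon word.

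The genuinely nontrivial content sits in the two cited lemmas — arranging a connected support into an honest Lyndon word, and showing a disconnected support never yields one — so beyond invoking them the only remaining points are the elementary fact that $\alpha^n$ is primitive only for $n=1$ and the bookkeeping needed to match the index conventions of Lemma~\ref{lem:existenceLyndonWord} so that the realized weight is precisely the prescribed $e$. I do not anticipate a serious obstacle here.
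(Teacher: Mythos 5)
Your proposal is correct and follows exactly the same route as the paper's own proof, which simply cites Lemma~\ref{lem:existenceLyndonWord} and Lemma~\ref{lem:nonExistenceLyndonWord} and notes that $\alpha^n$ fails to be primitive for $n\geq 2$; you have merely written out the two inclusions that the paper leaves implicit.
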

	\begin{proof}
		This follows from combining Lemma \ref{lem:existenceLyndonWord} and Lemma \ref{lem:nonExistenceLyndonWord}. Note that the word corresponding to the weight $k e_\alpha$ with $k \geq 2$ is never a Lyndon element, as it is not primitive.
	\end{proof}
	
	\noindent The \textit{bracket words of length $k$} in $S$ are defined inductively as follows:
	\begin{enumerate}
		\item the bracket words of length one are the elements of $S$,
		\item the bracket words of length $k > 1$ are the expressions $[b_1, b_2]$ where $b_1$, $b_2$ are bracket words of lengths $k_1, k_2$ respectively with $k_1 < k$, $k_2 < k$ and $k = k_1 + k_2$.
	\end{enumerate}
	Analogous as for words, we define the \textit{weight $e_b$ of a bracket word $b$} by the function $e_b:S \to \Z^{\geq 0}$ which assigns to each vertex $\alpha$ the number of times it occurs in the bracket word $b$.

	In \cite{wade15-1} a bracketing procedure is described which assigns to every Lyndon element a bracket word with the same weight. By extending this assignment linearly and evaluating the bracket words in $\g_{\G}^K$, we get a map between vector spaces $\phi:K[\LE(M)] \to \g_{\G}^K$. The following was (re)proven in \cite[Corollary 5.24.]{wade15-1}.
	
	\begin{theorem}
		The linear map $\phi:K[\LE(M)] \to \g_{\G}^K$ is bijective.
	\end{theorem}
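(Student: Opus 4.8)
The plan is to derive bijectivity of $\phi$ from the Poincar\'e--Birkhoff--Witt theorem for $\g_\G^K$ together with the combinatorics of Lyndon elements in the trace monoid $M$. First I would identify the universal enveloping algebra $U(\g_\G^K)$ with the \emph{partially commutative associative algebra} $A_\G^K := K\langle S\rangle/(\alpha\beta - \beta\alpha \mid \{\alpha,\beta\}\notin E)$; this holds because $U$ of a quotient of the free Lie algebra by a Lie ideal is the tensor algebra modulo the two-sided ideal generated by that ideal, and the latter is here generated by the commutators of non-adjacent vertices. The algebra $A_\G^K$ has the set $M = W(S)/\sim$ as a $K$-basis, each $m \in M$ being the common image in $A_\G^K$ of all the words in its class, and both $\g_\G^K \subset U(\g_\G^K) = A_\G^K$ and $\phi$ are graded by the weight function $S \to \Z^{\geq 0}$, so it suffices to treat each (finite-dimensional) weight component separately. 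Fixing the order on $S$, the lexicographic order on $W(S)$, and the induced order $m < m' \iff \std(m) < \std(m')$ on $M$, one checks by induction on $|m|$, using the recursive bracketing procedure of \cite{wade15-1}, that for a Lyndon element $m$ the expansion of $\phi(m)$ in the basis $M$ of $A_\G^K$ has the triangular form $\phi(m) = \std(m) + \sum_{v > \std(m)} c_v^{(m)} v$.

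Injectivity of $\phi$ is then a standard leading-term argument: in a nontrivial combination $\sum_i c_i\phi(m_i)$ with the $m_i \in \LE(M)$ distinct, pick the index $i_0$ with $\std(m_{i_0})$ minimal (it is unique since $m \mapsto \std(m)$ is injective on $\LE(M)$); no other $\phi(m_i)$ contributes to the monomial $\std(m_{i_0})$, so its coefficient in the sum is $c_{i_0} \neq 0$. For surjectivity I would invoke the partially commutative Lyndon factorization due to Lalonde \cite{lalo91-1}: every $m \in M$ is uniquely a product $m = m_1 m_2 \cdots m_r$ in $M$ with $m_1 \geq m_2 \geq \cdots \geq m_r$ and each $m_j \in \LE(M)$. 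A routine argument extending the triangularity above multiplicatively shows that the ``Lyndon PBW monomials'' $\phi(m_1)\phi(m_2)\cdots\phi(m_r)$ (taken with $m_1 \geq \cdots \geq m_r$) are triangular with respect to the basis $M$ of $A_\G^K = U(\g_\G^K)$ and hence form a $K$-basis of $U(\g_\G^K)$. On the other hand, PBW says that ordered monomials in any ordered $K$-basis of $\g_\G^K$ also form a $K$-basis of $U(\g_\G^K)$; comparing the two descriptions in each weight forces $\{\phi(m) \mid m \in \LE(M)\}$ to be a basis of $\g_\G^K$, so $\phi$ is bijective. (Alternatively, surjectivity follows from injectivity by comparing dimensions in each weight, using the partially commutative Witt formula of Duchamp--Krob for $\dim\g_\G^K$.)

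The genuinely non-trivial ingredient is the combinatorics --- the unique non-increasing Lyndon factorization in the trace monoid $M$ (equivalently, the partially commutative Witt formula) --- which I would import from the references rather than reprove; everything else is bookkeeping around PBW. I would also flag two points of care: \cite{wade15-1} adopts the opposite convention (adjacent vertices commute), so its combinatorial results must be applied to the complement of $\G$; and the PBW theorem is available here precisely because $K \subset \C$, so $K$ has characteristic zero and $\g_\G^K$ embeds in $U(\g_\G^K)$.
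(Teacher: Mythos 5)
The paper does not actually prove this statement: it is imported wholesale as \cite[Corollary 5.24]{wade15-1}, which in turn summarizes the work of Duchamp--Krob and Lalonde \cite{lalo91-1,dk92-1,kl93-1}. Your proposal reconstructs what is essentially the standard proof from that literature: identify $U(\g_\G^K)$ with the trace-monoid algebra having $M$ as basis, establish lexicographic triangularity of the bracketed Lyndon elements, use the unique non-increasing Lyndon factorization of traces to see that decreasing Lyndon monomials form a triangular (hence basis) family in $U(\g_\G^K)$, and finish with PBW plus a weight-by-weight count of generators (equivalently the partially commutative Witt formula). This is sound and fully compatible with the sources the paper cites; the only difference in route is that you sketch the argument where the paper simply quotes it, and your explicit flag about the opposite convention in \cite{wade15-1} (there adjacent vertices commute, so its statements apply to the complement graph) matches the paper's own warning. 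Two small corrections and one caution. PBW holds over any field, so characteristic zero is not what makes $\g_\G^K \subset U(\g_\G^K)$ available. The triangular expansion $\phi(m) = \std(m) + \sum_{v} c_v v$ must be stated with the conventions of this paper in mind: here $\std(m)$ is the lexicographically \emph{maximal} representative of the trace $m$ while Lyndon words are \emph{minimal} in their conjugacy class, so the direction of the ``higher terms'' and the order used in the decreasing factorization have to be fixed consistently; this triangularity, together with Lalonde's unique factorization, is precisely the technical core of the cited results, so your ``one checks by induction'' is where the real work sits rather than mere bookkeeping. Finally, the step ``comparing the two descriptions in each weight'' should be made explicit as the usual induction on total degree: both PBW-type bases give the same Hilbert series of $U(\g_\G^K)$, which determines the number of generators in each multidegree, yielding $\dim(\g_\G^K)_d = \#\{m \in \LE(M) \mid e_m = d\}$ and hence surjectivity from injectivity --- your parenthetical Witt-formula alternative is the same computation.
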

	
	Note that $\g_{\G}^K$ has a vector space direct sum decomposition
	\[ \g_{\G}^K = \bigoplus_{i = 1}^\infty V^i \quad \quad \quad \text{where} \quad V^1 = V = \spn_K(S) \quad \text{and} \quad V^{i+1} = [V, V^i].\]
	It is clear that under the map $\phi:K[\LE(M)] \to \g_{\G}^K$, Lyndon elements of length $i$ are mapped into $V^i$. Since for any positive integer $k$, we have that $\gamma_{k}(\g_{\G}^K) = \bigoplus_{i = k}^\infty V^i$, it follows that if we write $\LE_c(M)$ for the Lyndon elements of length at most $c$, we have an induced bijection $\phi:K[\LE_c(M)] \to \n_{\G,c}^K$. Equivalently, $\phi(\LE_c(M))$ is a basis for $\n_{\G, c}^K$. Moreover, it is easy to verify that it is a basis of eigenvectors for any vertex diagonal automorphism. In particular, if $m \in \LE_c(S)$ is a Lyndon element and $f$ a vertex diagonal automorphism determined by $\Psi:S \to K$, the eigenvalue of $\phi(m)$ under $f$ is equal to $\prod_{\alpha \in S} \Psi(\alpha)^{e_m(\alpha)}$. Together with Proposition \ref{prop:weightsOfLyndonEl} this immediately proves the main result of this section.
	
	\begin{prop}
		\label{prop:eigenvaluesVertexDiag}
		Let $\n_{\G,c}^K$ be the $c$-step nilpotent Lie algebra associated to the graph $\G = (S, E)$ and let $f_\Psi$ be a vertex-diagonal automorphism of $\n_{\G,c}^K$ determined by the map $\Psi:S \to K$. Then the set of eigenvalues of $f_\Psi$ is given by
		\begin{equation*}
			\Psi(S) \cup
			\left\{ \prod_{\alpha \in S} \Psi(\alpha)^{e(\alpha)} \;\middle|\; 
			\arraycolsep=1.4pt\def\arraystretch{1.3} \begin{array}{c}
				e:S \to \Z^{\geq 0}, \; \supp(e) \text{ is connected}\\
				|\supp(e)| \geq 2, \; \sum_{\alpha\in S} e(\alpha) \leq c 
			\end{array} \right\}.
		\end{equation*}
	\end{prop}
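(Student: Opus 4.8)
The plan is to read the eigenvalues directly off the eigenbasis $\phi(\LE_c(M))$ constructed just above. First I would recall that $\phi:K[\LE_c(M)] \to \n_{\G,c}^K$ is a linear bijection and that each basis vector $\phi(m)$, for $m \in \LE_c(M)$, is an eigenvector of the vertex-diagonal automorphism $f_\Psi$ with eigenvalue $\prod_{\alpha \in S} \Psi(\alpha)^{e_m(\alpha)}$, where $e_m$ denotes the weight of the Lyndon element $m$. Since $f_\Psi$ admits a basis of eigenvectors, its set of eigenvalues is precisely $\{ \prod_{\alpha \in S} \Psi(\alpha)^{e_m(\alpha)} \mid m \in \LE_c(M) \}$, so the whole statement reduces to identifying which weight functions $e:S \to \Z^{\geq 0}$ arise as $e_m$ for a Lyndon element $m$ of length at most $c$.

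That identification is exactly what Proposition \ref{prop:weightsOfLyndonEl} supplies once one restricts to length $\leq c$: the admissible weights are the $e_\alpha$ with $\alpha \in S$, together with those $e$ whose support is connected, has at least two elements, and satisfies $\sum_{\alpha \in S} e(\alpha) \leq c$. Here I would use that $|m| = \sum_{\alpha \in S} e_m(\alpha)$, so that the bound $|m| \leq c$ translates into the stated degree condition. The weights $e_\alpha$ have length one and hence always occur (using $c \geq 1$), and the products they contribute are exactly the elements of $\Psi(S)$; the remaining admissible weights contribute exactly the second set appearing in the statement. Assembling the two pieces yields the claimed description of the eigenvalue set.

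The argument is essentially a bookkeeping assembly of results already in place, so I do not anticipate a real obstacle; the one point that needs a word of care is that Proposition \ref{prop:weightsOfLyndonEl} also excludes weights of the form $k\,e_\alpha$ with $k \geq 2$ (the associated word fails to be primitive), which is why no ``repeated single vertex'' terms $\Psi(\alpha)^k$ for $k \geq 2$ appear in the list — this is already recorded there and only needs to be invoked. It is also worth stating explicitly that the equality of sets is genuine in both directions: every listed product is realized by some $\phi(m)$, and conversely every eigenvalue of $f_\Psi$ equals such a product because the eigenvectors $\phi(m)$ span $\n_{\G,c}^K$.
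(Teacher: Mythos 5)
Your proposal matches the paper's own proof: both read the eigenvalues off the Lyndon-element basis $\phi(\LE_c(M))$, use that each $\phi(m)$ is an eigenvector of $f_\Psi$ with eigenvalue $\prod_{\alpha\in S}\Psi(\alpha)^{e_m(\alpha)}$, and then invoke Proposition~\ref{prop:weightsOfLyndonEl} (with the length bound $|m|\le c$) to characterize the admissible weights. The argument is correct and essentially identical to the one in the text.
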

		
	\subsection{Reduction to algebraic units}
	\label{sec:reductionToAlgInt}
	
	As explained under Theorem \ref{thm:injectiveVersionClassificationRationalForms}, we work here with the algebraic closure $\overline{\Q}$ of $\Q$. Let $\G = (S, E)$ be a simple undirected graph and write $\overline{\Q}^S$ for the set of maps from $S$ to $\overline{\Q}$. Note that $\prod_{\lambda \in \Lambda} \Perm(\lambda)$ has a right action on $\overline{\Q}^S$ by precomposition, namely $ \Psi \cdot \theta := \Psi \circ \theta$ for all $\theta \in \prod_{\lambda \in \Lambda} \Perm(\lambda)$ and $\Psi \in \overline{\Q}^S$. Let us write $\mathcal{H}_\G^{\overline{\Q}}$ for the orbit space, so
	\[ \mathcal{H}_\G^{\overline{\Q}} :=  {\overline{\Q}}^S \bigg \slash \prod_{\lambda \in \Lambda} \Perm(\lambda).\]
	
	Note that the groups $\Gal({\overline{\Q}}/\Q)$ and $\Aut(\overline{\G})$ have a well-defined left, respectively right action on $\mathcal{H}_{\G}^{\overline{\Q}}$ by
	\begin{align*}
		\sigma \cdot [\Psi] := [\sigma \circ \Psi], \quad \quad \quad [\Psi]\cdot\varphi := \left[\Psi \circ r(\varphi)\right] 
	\end{align*}
	for all $\sigma \in \Gal({\overline{\Q}}/\Q)$, $\varphi \in \Aut(\overline{\G})$ and $\Psi \in {\overline{\Q}}^S$. Recall that $r$ denotes the morphism $r:\Aut(\overline{\G}) \to \Aut(\G)$ from section \ref{sec:groupOfGradedAutos}, obtained after ordering the vertices in each coherent component. To see that the action by $\Aut(\overline{\G})$ is well defined, one uses the fact that $\prod_{\lambda \in \Lambda} \Perm(\lambda)$ is a normal subgroup of $\Aut(\G)$.
	
	\begin{theorem}
		\label{thm:AnosovEigenvalueCond}
		Let $\G = (S, E)$ be a simple undirected graph and $\rho:\Gal({\Qbar}/\Q) \to \Aut(\overline{\G})$ a continuous morphism. The rational form $\n^\Q_{\rho,c}$ of $\n^\C_{\G,c}$ is Anosov if and only if there exists a map $\Psi:S \to \Qbar$ such that 
		\begin{enumerate}[label = (\roman*)]
			\item \label{item:thmeigenvalcond1} for any vertex $\alpha \in S$, $\Psi(\alpha)$ is an algebraic unit,
			\item \label{item:thmeigenvalcond2} for any $1 \leq n \leq c$ and any (not necessarily distinct) vertices $\alpha_1, \ldots, \alpha_n \in S$ such that $\{\alpha_1, \ldots, \alpha_n\}$ spans a connected subgraph of $\G$, we have $|\Psi(\alpha_1) \cdot \ldots \cdot \Psi(\alpha_n)| \neq 1$,
			\item \label{item:thmeigenvalcond3} and for any $\sigma \in \Gal({\Qbar}/\Q)$ it holds that $\sigma \cdot [\Psi]  = [\Psi] \cdot \rho_\sigma$ in $\mathcal{H}_\G^{\Qbar}$.
		\end{enumerate}
	\end{theorem}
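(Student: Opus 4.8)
The plan is to prove both implications by unwinding the definition of an Anosov rational form in terms of the automorphism group structure established in Section~\ref{sec:prelimOnLieAlgGraphs}, and then translating everything into the language of the eigenvalue data $\Psi$. First I would note that $\n_{\rho,c}^\Q$ is Anosov if and only if the isomorphic form $\n_{\overline\rho,c}^\Q \subset \n_{\G,c}^{\Qbar}$ admits a hyperbolic, integer-like automorphism; since the eigenvalues of such an automorphism are algebraic units, they live in $\Qbar$, which is why working over $\Qbar$ is the right setting. By Lemma~\ref{lem:autosOfRatForm}, an automorphism $f \in \Aut(\n_{\G,c}^{\Qbar})$ preserves $\n_{\overline\rho,c}^\Q$ exactly when $i(\rho_\sigma^{-1})\,f\,i(\rho_\sigma) = \asi{f}$ for all $\sigma \in \Gal(\Qbar/\Q)$.

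For the ``if'' direction, given $\Psi$ satisfying (i)--(iii), I would take $f = f_\Psi$, the vertex diagonal automorphism determined by $\Psi$. Condition (iii), $\sigma\cdot[\Psi] = [\Psi]\cdot\rho_\sigma$ in $\mathcal{H}_\G^{\Qbar}$, says precisely that $\sigma\circ\Psi$ and $\Psi\circ r(\rho_\sigma)$ differ by an element of $\prod_{\lambda}\Perm(\lambda)$; the key computation is to check that this is exactly the condition $i(\rho_\sigma^{-1})\,f_\Psi\,i(\rho_\sigma) = \asi{f_\Psi}$ from Lemma~\ref{lem:autosOfRatForm} — here one uses that $i(\rho_\sigma)$ acts on vertices by the permutation $r(\rho_\sigma)$, that $\asi{f_\Psi}$ is the vertex diagonal automorphism determined by $\sigma\circ\Psi$, and that two vertex diagonal automorphisms determined by maps $S\to\Qbar$ differing by an element of $\prod_\lambda\Perm(\lambda)$ may well fail to be equal, so some care with the semidirect product structure $G\cong M\rtimes((\prod_\lambda\GL(V_\lambda))\rtimes\overline P(\Aut(\overline\G)))$ is needed to see that the correct intertwining nonetheless holds. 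Then $f_\Psi$ is integer-like: by Proposition~\ref{prop:eigenvaluesVertexDiag} its eigenvalues are the values $\Psi(\alpha)$ together with products $\prod_\alpha\Psi(\alpha)^{e(\alpha)}$ over connected supports, all of which are algebraic units by (i), and the characteristic polynomial has rational — hence, being a product of algebraic integers closed under Galois, integer — coefficients. Hyperbolicity is exactly condition (ii) applied to Proposition~\ref{prop:eigenvaluesVertexDiag}: no eigenvalue has absolute value $1$.

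For the ``only if'' direction, suppose $\n_{\overline\rho,c}^\Q$ has an Anosov automorphism $g$. The point is that $g$ need not be vertex diagonal, so the task is to replace it by one that is. Here I would use the semidirect decomposition of $G$: via $\pi = q\circ p$, the image $\pi(g) \in \Aut(\overline\G)$, and after composing $g$ with the section $i$ of an appropriate element one reduces to the case where $g$ projects trivially to $\Aut(\overline\G)$, i.e.\ $p(g) \in M\rtimes\prod_\lambda\GL(V_\lambda)$. One then wants to extract from $g$ a vertex diagonal automorphism with the same ``eigenvalue content'' — the natural move is to pass to the reductive part, killing the unipotent radical $M$, so that $p(g)$ becomes block diagonal in $\prod_\lambda\GL(V_\lambda)$, and then diagonalize each block (possibly after a further finite extension, but the eigenvalues are already algebraic units) to get $\Psi$. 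Properties (i) and (ii) then transfer because $g$ being integer-like forces the $\Psi(\alpha)$ to be algebraic units and $g$ being hyperbolic forces the connected products to avoid the unit circle, again via Proposition~\ref{prop:eigenvaluesVertexDiag}. Property (iii) is the translation, back through Lemma~\ref{lem:autosOfRatForm}, of the fact that the modified automorphism still preserves the rational form — one must check that the reduction steps (composing with $i(\varphi)$, projecting to the reductive quotient) preserve the defining intertwining relation up to the $\prod_\lambda\Perm(\lambda)$ ambiguity that $\mathcal{H}_\G^{\Qbar}$ is designed to absorb.

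\textbf{Main obstacle.} I expect the crux to be the ``only if'' direction, specifically justifying that one may replace an arbitrary Anosov automorphism by a vertex diagonal one without losing hyperbolicity or integer-likeness: projecting to the reductive part and diagonalizing is clean for the eigenvalue conditions, but one has to verify carefully that this new automorphism \emph{still preserves the rational form} (equivalently, satisfies the Lemma~\ref{lem:autosOfRatForm} intertwining relation), and that the unavoidable coordinate choices — ordering vertices within coherent components, choosing diagonalizing bases, the section $r$ — only move $\Psi$ within its $\prod_\lambda\Perm(\lambda)$-orbit, which is exactly why the statement is phrased in $\mathcal{H}_\G^{\Qbar}$ rather than in $\Qbar^S$. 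The ``if'' direction is comparatively routine once the bookkeeping identifying $\asi{f_\Psi}$ with $f_{\sigma\circ\Psi}$ and the action formulas on $\mathcal{H}_\G^{\Qbar}$ is set up correctly.
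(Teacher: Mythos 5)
Your high-level skeleton (translate the rational-form condition via Lemma~\ref{lem:autosOfRatForm}, then relate Anosov automorphisms to vertex-diagonal ones and read off eigenvalue conditions from Proposition~\ref{prop:eigenvaluesVertexDiag}) matches the paper, but the proposal has a genuine gap in the ``if'' direction and a vaguer one in the ``only if'' direction, and you have mislocated the crux.

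In the ``if'' direction, taking $f = f_\Psi$ directly does not work, and no amount of bookkeeping with the semidirect product will make it work. Condition~(iii) only says $\sigma\circ\Psi = \Psi\circ r(\rho_\sigma)\circ\theta$ for \emph{some} $\theta\in\prod_\lambda\Perm(\lambda)$, whereas the Lemma~\ref{lem:autosOfRatForm} criterion for $f_\Psi$ would require $i(\rho_\sigma^{-1})f_\Psi i(\rho_\sigma) = \asi{f_\Psi}$, i.e.\ equality of the vertex-diagonal maps with eigenvalue data $\Psi\circ r(\rho_\sigma)$ and $\sigma\circ\Psi$ respectively --- equality \emph{on the nose}, not up to $\theta$. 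When $\theta$ is nontrivial these are genuinely different automorphisms, so $f_\Psi$ simply does not preserve $\n^\Q_{\overline\rho,c}$. The paper resolves this not by bookkeeping but by constructing a \emph{different} automorphism: it forms the polynomial $g_j(X)=\prod_{\alpha\in\lambda_j}(X-\Psi(\alpha))$ on each orbit representative, checks its coefficients are $\Stab_\rho(\lambda_j)$-invariant, takes $B_j$ to be the companion matrix of $g_j$ acting on $V_{\lambda_j}$ (a block that is conjugate to the diagonal block of $f_\Psi$ but, unlike it, is fixed by the stabilizer), and then spreads $B_j$ over the whole orbit via Galois translates $i(\rho_\sigma)\asi{B_j}i(\rho_\sigma)^{-1}$. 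The resulting $f$ is conjugate to $f_\Psi$ inside $\prod_\lambda\GL(V_\lambda)$ (hence has the same eigenvalue multiset, giving hyperbolicity via Proposition~\ref{prop:eigenvaluesVertexDiag}), but additionally satisfies the intertwining relation and so actually preserves the rational form. That companion-matrix trick is the key new idea in this direction; it is missing from your proposal.

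In the ``only if'' direction, your description is close in spirit but the mechanism you sketch is off. You propose to ``compose $g$ with the section $i$ of an appropriate element'' to kill the $\Aut(\overline\G)$-part and then ``pass to the reductive quotient'' and diagonalize. Composing with $i(\varphi)$ changes the automorphism and need not preserve the rational form or Anosov-ness, and projecting $G\to G/M$ is a group-theoretic quotient, not an automorphism of the Lie algebra. The paper instead: replaces $f$ by its semisimple part and a power so that it lies in a maximal torus; uses conjugacy of maximal tori over the algebraically closed field $\Qbar$ to write $f=h\tilde fh^{-1}$ with $\tilde f$ vertex-diagonal and, crucially, arranges $\pi(h)=1$ by normalizing via $i(\Aut(\overline\G))$; and then does \emph{not} claim $\tilde f$ preserves the rational form. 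Instead it plugs $f=h\tilde fh^{-1}$ into the Lemma~\ref{lem:autosOfRatForm} relation for $f$, pushes this through $p$, decomposes $p(h^{-1}i(\rho_\sigma)\asi{h})$ in the semidirect product, and equates the $\prod_\lambda\GL(V_\lambda)$-components to conclude that $\tilde f|_{V_\lambda}$ and $\asi{(\tilde f|_{V_{\rho_\sigma^{-1}(\lambda)}})}$ are conjugate, which is exactly the statement $[\Psi]=[\sigma\circ\Psi\circ r(\rho_\sigma^{-1})]$. Your proposal waves at this last step as ``the translation back through Lemma~\ref{lem:autosOfRatForm}'' but this unwinding is a nontrivial computation and is where condition~(iii) actually comes from.
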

	Note that the conditions in this theorem do not depend on the choice of representative $\Psi$ in its equivalence class $[\Psi] \in \mathcal{H}^{\Qbar}_\G$.
	\begin{proof}
		
		First assume that $\n_{\rho, c}^\Q \subset \n_{\G, c}^{\Qbar}$ is Anosov with Anosov automorphism $f:\n_{\rho,c}^\Q \to \n_{\rho,c}^\Q$. Recall that $p, \pi$ and $i$ are the morphisms as defined in section \ref{sec:groupOfGradedAutos} by (\ref{eq:morphismp}), (\ref{eq:morphismpi}) and (\ref{eq:morphismi}), respectively, and that
		\[G = p\left(\Aut\left(\n_{\G, c}^{\Qbar}\right)\right) = M \cdot \left(\prod_{\lambda \in \Lambda } \GL(V_\lambda)\right) \cdot \overline{P}(\Aut(\overline{\G})).\]
		Note that since $\n_{\rho, c}^\Q$ is a rational form of $\n^{\Qbar}_{\G, c}$, we can naturally extend $f$ to an automorphism of $\n^{\Qbar}_{\G, c}$. The semisimple part of $f$ is again an Anosov automorphism, as $\Aut(\n_{\rho, c}^\Q)$ is a linear algebraic group. Moreover, any positive power $f^k$ for $k > 0$ of $f$ is an Anosov automorphism as well, so without loss of generality we can assume that $f$ is semi-simple and lies in the Zariski-connected component of $\Aut\left(\n_{\G,c}^{\Qbar}\right)$. Hence, under these assumptions, $f$ lies in some maximal torus of $\Aut\left(\n^{\Qbar}_{\G,c}\right)$. We know as well that the subgroup of vertex diagonal automorphisms $D_{\G,c}$ is a maximal torus of $\Aut\left(\n^{\Qbar}_{\G,c}\right)$. Since $\Aut\left(\n^{\Qbar}_{\G,c}\right)$ is a linear algebraic group over an algebraically closed field, all its maximal tori are conjugate and thus there exists an $h \in \Aut\left(\n^{\Qbar}_{\G,c}\right)$ and an $\tilde{f} \in D_{\G,c}$ such that $h \tilde{f} h^{-1} = f$. Moreover, since $i\left(\Aut\left(\overline{\G}\right)\right)$ normalizes $D_{\G,c}$, we can assume that $\pi(h) = 1$.
		
		Let us define $\Psi:S \to \Qbar$ by assigning to a vertex $\alpha \in S$ its corresponding eigenvalue under $\tilde{f}$. Since $\tilde{f}$ is an Anosov automorphism, it follows that $\Psi(\alpha)$ is a hyperbolic algebraic unit for all $\alpha \in S$. As a consequence we also have $|\Psi(\alpha)^k| \neq 0$ for any $\alpha \in S$ and $k \in \Z^{> 0}$. By Proposition \ref{prop:eigenvaluesVertexDiag}, we know that for any vertices $\alpha_1,\ldots,\alpha_n \in S$ with $|\{\alpha_1,\ldots,\alpha_n\}| \geq 2$ and such that $\{\alpha_1,\ldots,\alpha_n\}$ spans a connected subgraph of $\G$, the product $\Psi(\alpha_1)\cdot\ldots \cdot \Psi(\alpha_n)$ is an eigenvalue of $\tilde{f}$ and thus $|\Psi(\alpha_1)\cdot\ldots \cdot \Psi(\alpha_n)| \neq 1$. All together this proves $\Psi$ satisfies conditions \ref{item:thmeigenvalcond1} and \ref{item:thmeigenvalcond2}.
		
		Let us now show $\Psi$ also satisfies condition \ref{item:thmeigenvalcond3}. Since $f\left(\n_{\rho, c}^\Q\right) = \n_{\rho, c}^\Q$, it follows from Lemma \ref{lem:autosOfRatForm} that $f  \, i(\rho_\sigma) = i(\rho_\sigma)  \asi{f}$ for all $\sigma \in  \Gal(\Qbar/\Q)$. Substituting $h\tilde{f}h^{-1}$ for $f$, we find the equality 
		\begin{equation}
			\label{eq:proofThmEigenvalues1}
			\tilde{f} a_\sigma = a_\sigma \asi{\tilde{f}} \quad \quad \text{where} \quad  a_\sigma := h^{-1} i(\rho_\sigma) \asi{h}.
		\end{equation}
		Note that $\pi(a_\sigma) = \pi(h^{-1}) \pi(i(\rho_\sigma)) \pi(h) = \pi(i( \rho_\sigma)) = \rho_\sigma$, and hence $p(a_\sigma) = m_\sigma A_\sigma \overline{P}(\rho_\sigma)$ for some unique $m_\sigma \in M, A_\sigma \in \prod_{\lambda \in \Lambda} \GL(V_\lambda)$. Applying the morphism $p:\Aut\left(\n_{\G, c}^{\Qbar}\right) \to G$ to equation (\ref{eq:proofThmEigenvalues1}), we get that
		\[ p(\tilde{f}) m_\sigma A_\sigma \overline{P}(\rho_\sigma) =  m_\sigma A_\sigma \overline{P}(\rho_\sigma ) \asi{p(\tilde{f})}.\]
		Note that $p(\tilde{f})$ and $\asi{p(\tilde{f})}$ are elements in $\prod_{\lambda \in \Lambda} \GL(V_\lambda)$, since $\tilde{f}$ is diagonal on $S$. Rearranging the equation above to
		\[ \Big( p(\tilde{f}) m_\sigma p(\tilde{f})^{-1} \Big) \Big( p(\tilde{f}) A_\sigma \Big) \Big(\overline{P}( \rho_\sigma ) \Big) =  \Big(m_\sigma \Big) \Big( A_\sigma \overline{P}(\rho_\sigma) \asi{p(\tilde{f})} \overline{P}( \rho_\sigma)^{-1} \Big) \Big( \overline{P}(\rho_\sigma) \Big) \]
		we can find the equality on $\prod_{\lambda \in \Lambda} \GL(V_\lambda)$ to be
		\[ A_\sigma^{-1} \, p(\tilde{f}) \,  A_\sigma = \overline{P}( \rho_\sigma )\, \asi{p(\tilde{f})} \, \overline{P}( \rho_\sigma)^{-1} \]
		Since both sides are elements of $\prod_{\lambda \in \Lambda} \GL(V_\lambda)$, we can look at their projection onto $\GL(V_\lambda)$ for any $\lambda \in \Lambda$. We then find that $\tilde{f}|_{V_\lambda}$ and  $\displaystyle \asi{\left(\tilde{f}|_{V_{ \rho_\sigma^{-1} (\lambda)}}\right)}$ are conjugate and thus their eigenvalues, counted with multiplicities, coincide. This shows exactly that $[\Psi] = [\sigma \circ \Psi \circ r(\rho_\sigma^{-1})]$ for any $\sigma \in \Gal(\Qbar/\Q)$. This proves that $\Psi$ satisfies all the required properties.
		
		Conversely, assume that a map $\Psi: S \to \Qbar$ satisfying conditions \ref{item:thmeigenvalcond1}, \ref{item:thmeigenvalcond2} and \ref{item:thmeigenvalcond3} exists. Let $m$ be the number of orbits for the $\rho$-action of $\Gal(\Qbar/\Q)$ on $\Lambda$ and choose coherent components $\lambda_1, \ldots, \lambda_m \in \Lambda$ such that $\Lambda = \bigsqcup_{j = 1}^m \Orb_{\rho}(\lambda_i)$. For any $j \in \E{m}$, define the polynomial $g_j(X) \in \Qbar[X]$ by
		\[g_j(X) = \prod_{\alpha \in \lambda_j} (X - \Psi(\alpha)).\]
		Let us fix an action of $\Gal(\Qbar/\Q)$ on $\Qbar[X]$ by acting on the coefficients of the polynomials. As one can check this is an action by ring automorphisms. Take an arbitrary $\sigma \in \Stab_\rho(\lambda_j)$. By the assumption, we have that $[\sigma \circ \Psi] = [\Psi \circ r(\rho_\sigma)]$. By consequence, there exists a $\theta \in \prod_{\lambda \in \Lambda} \Perm(\lambda)$ such that $\sigma \circ \Psi = \Psi \circ r(\rho_\sigma) \circ \theta$. We now have that
		\begin{align}
			\label{eq:proofCondEigen2}
			\asi{g_j(X)} &= \prod_{\alpha \in \lambda_j} \asi{\left(X - \Psi(\alpha)\right)}\nonumber\\
			&= \prod_{\alpha \in \lambda_j} \left(X - (\sigma \circ \Psi)(\alpha)\right)\nonumber\\
			&= \prod_{\alpha \in \lambda_j} \left(X - (\Psi \circ r(\rho_\sigma) \circ \theta)(\alpha)\right)\nonumber\\
			&= \prod_{\alpha \in \lambda_j} \left(X - (\Psi \circ r(\rho_\sigma))(\alpha)\right)\nonumber\\
			&= \prod_{\alpha \in \rho_\sigma(\lambda_i)} \left(X - \Psi(\alpha)\right)\nonumber\\
			&= \prod_{\alpha \in \lambda_j} \left(X - \Psi(\alpha)\right) = g_j(X).
		\end{align}
		So the coefficients of $g_j(X)$ lie in the (finite) field extension ${\Qbar}^{\Stab_\rho(\lambda_j)}/\Q$. 
		
		Next, for any $j \in \{1, \ldots, m\}$, let $B_j \in \GL(V_{\lambda_j})$ be the linear map given by the companion matrix of $g_j(X)$ in a basis of vertices of $V_{\lambda_j}$, where the order of the basis does not matter. Clearly we have that $\asi{B_j} = B_j$ for any $\sigma \in \Stab_{\rho}(\lambda_j)$. Now define the linear map $A:V \to V$ by setting for any $\mu \in \Orb_\rho(\lambda_j)$ and $v \in V_\mu$:
		\[ A(v) = i(\rho_\sigma) \, \asi{B_j} \, i( \rho_\sigma)^{-1} v \]
		where $\sigma \in \Gal({\Qbar}/\Q)$ is chosen such that $\sigma(\lambda_j) = \mu$. Let us first show this is well-defined and independent of the choice of $\sigma \in \Gal({\Qbar}/\Q)$. Say $\nu \in \Gal({\Qbar}/\Q)$ is another element which also satisfies $\nu(\lambda_j) = \mu$, then $ \sigma^{-1} \nu \in \text{stab}_\rho(\lambda_j)$ and we get
		\begin{align*}
			i(\rho_\nu) \, \prescript{\nu}{}{B_j} \, i(\rho_\nu)^{-1} v &= i(\rho_\sigma) i(\rho_{ \sigma^{-1}\nu}) \, \asi{\left( \prescript{\sigma^{-1} \nu}{}{B_j} \right)} i(\rho_{ \sigma^{-1}\nu})^{-1} i(\rho_\sigma)^{-1} v\\
			&= i(\rho_\sigma)  \, \asi{B_j} i(\rho_\sigma)^{-1} v.
		\end{align*}
		since $i(\rho_{\sigma^{-1}\nu})|_{V_{\lambda_j}} = \Id_{V_{\lambda_j}}$. We thus have a well-defined linear map $A:V \to V$ and as one can check $A \in \prod_{\lambda \in \Lambda} \GL(V_{\lambda}) \subset p(T_{\G,c})$. This gives a unique automorphism $f \in T_{\G,c} \subset \Aut(\n_\G^{\Qbar})$ with $p(f) = A$. 
		
		We claim that $f$ induces an Anosov automorphism on $\n^{\Q}_{\rho, c}$. To check that $f(\n^\Q_{\rho, c}) = \n^\Q_{\rho, c}$, we need to check that $i(\rho_\sigma)^{-1} \,f \, i(\rho_\sigma) =   \asi{f}$ for any $\sigma \in \Gal({\Qbar}/\Q)$. Since both $f$ and $i(\rho_\sigma)$ lie in $T_{\G,c} \subset \Aut(\n_{\G, \rho}^{\Qbar})$, it suffices to check this on $V$. Take any $j \in \{1,\ldots, m\}$, $\mu \in \Orb_{\rho}(\lambda_j)$ and $v \in V_\mu$. Let $\nu \in \Gal({\Qbar}/\Q)$ be an element such that $\rho_\nu(\lambda_j) = \rho_\sigma(\mu)$ or equivalently $\rho_{\sigma^{-1} \nu}(\lambda_j) = \mu$. Then we have
		\begin{align*}
			i(\rho_\sigma)^{-1} \,f \, i(\rho_\sigma) v &= i(\rho_\sigma)^{-1} i(\rho_\nu) \, \ata{B_j} \, i(\rho_\nu)^{-1} i(\rho_\sigma) v\\
			&= i(\rho_{\sigma^{-1} \nu}) \, \asi{\left( \prescript{\sigma^{-1} \nu}{}{B_j} \right)} \, i(\rho_{\nu^{-1} \sigma}) v\\
			&= \asi{\left( i(\rho_{\sigma^{-1} \nu}) \, \prescript{\sigma^{-1} \nu}{}{B_j} \, i(\rho_{\sigma^{-1} \nu})^{-1} \right)} v\\
			&= \asi{f} v.
		\end{align*}
		The set of eigenvalues of $f$ on $V$ is equal to the image of $\Psi$ which are algebraic units by assumption. The other eigenvalues of $f$ are products of the eigenvalues on $V$ and are thus algebraic units as well. This shows that $f$ is integer-like, since its characteristic polynomial has coefficients in $\Q$ from the fact that $f(\n^{\Q}_{\rho, c}) = \n^{\Q}_{\rho, c}$ as proven above. To see that $f$ is hyperbolic, note that by the way we constructed $f$, it follows that $p(f)$ is conjugated to $p(f_{\Psi})$ by an element of $\prod_{\lambda \in \Lambda} \GL(V_\lambda)$. By consequence $f$ is conjugated to the vertex diagonal automorphism $f_{\Psi}$ in $\Aut(\n_{\G, c}^{\overline{\Q}})$. We thus only need to check that $f_{\Psi}$ is hyperbolic. This follows straightforward from the assumption on $\Psi$ and Proposition \ref{prop:eigenvaluesVertexDiag}.
	\end{proof}
	
	\section{Proof of Theorem \ref{thm:AnosovOrbitCondInjective}}
	\label{sec:Anosov}

	In this section, we give a condition for a rational form $\n^\Q_{\rho, c}$ to be Anosov which is easier to check than the condition in Theorem \ref{thm:AnosovEigenvalueCond} and which solely depends on how the orbits of the action on $\overline{\G}$ induced by $\rho$ look like and on which coherent components are fixed under the action of the complex conjugation automorphism. Before we prove this characterization, we first prove three lemmas. At the end of the section we list some corollaries, one of which is a correction to a result of \cite{main06-1}.
	
	A number field is said to be \textit{totally imaginary} if it has no real embeddings (see section \ref{sec:numberFieldsAndAlgUnitsWithHypProp}). In equation (\ref{eq:definitionz_rho}) of the introduction we defined a function $z_\rho$ for any morphism $\rho:\Gal(L/\Q) \to \Aut(\G)$. The following lemma motivates this definition. Note that if $\overline{\rho}:\Gal(\Qbar/\Q) \to \Aut(\overline{\G}):\sigma \mapsto \rho(\sigma|_L)$ denotes the extended morphism, then the associated function $z_{\overline{\rho}}$ is equal to $z_\rho$. As mentioned in the introduction, $z_\rho$ is constant on $\rho$-orbits. As usual, we let $\tau$ denote the complex conjugation automorphism and for any subgroup $H \leq \Gal(\Qbar/\Q)$ we write $\Qbar^H$ for the field consisting of all elements of $\Qbar$ fixed by all automorphisms of $H$.
	
	\begin{lemma}
		\label{lem:totallyImaginaryStabField}
		Let $\G = (S, E)$ be a graph with set of coherent components $\Lambda$ and $\rho:\Gal(\Qbar/\Q) \to \Aut(\overline{\G})$ a continuous morphism. For any $\lambda \in \Lambda$, the field $\Qbar^{H}$ with $H = \Stab_\rho(\lambda)$ is totally imaginary if and only if $z_\rho(\lambda) = 1/2$.
	\end{lemma}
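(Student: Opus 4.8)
The plan is to translate both conditions into statements about the complex conjugation automorphism $\tau$ and its interaction with the stabilizer $H = \Stab_\rho(\lambda)$, using basic Galois theory. First I recall the standard fact: for a finite Galois extension $\Qbar^H/\Q$, the real embeddings of $\Qbar^H$ into $\C$ correspond to the cosets $g H$ with $g \in \Gal(\Qbar/\Q)$ such that $g^{-1} \tau g \in H$ (equivalently, the complex conjugations in $\Gal(\Qbar/\Q)$ that fix $\Qbar^H$ pointwise, up to the identification of embeddings with left cosets). Hence $\Qbar^H$ is totally imaginary if and only if there is \emph{no} $g \in \Gal(\Qbar/\Q)$ with $g^{-1}\tau g \in H$; equivalently, $H$ contains no conjugate of $\tau$.

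Next I would rewrite the right-hand side. Since $\rho$ is a group morphism, for $\sigma \in \Gal(\Qbar/\Q)$ we have $\sigma^{-1}\tau\sigma \in H = \Stab_\rho(\lambda)$ precisely when $\rho_{\sigma^{-1}\tau\sigma}(\lambda) = \lambda$, i.e. $\rho_\sigma^{-1}\rho_\tau\rho_\sigma(\lambda) = \lambda$, i.e. $\rho_\tau(\rho_\sigma(\lambda)) = \rho_\sigma(\lambda)$. So $H$ contains a conjugate of $\tau$ if and only if there exists $\sigma$ with $\rho_{\tau\sigma}(\lambda) = \rho_\sigma(\lambda)$, which is exactly the condition defining $z_\rho(\lambda) = 1$ in \eqref{eq:definitionz_rho}. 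Combining with the previous paragraph: $\Qbar^H$ has a real embedding $\iff$ $z_\rho(\lambda) = 1$, and therefore $\Qbar^H$ is totally imaginary $\iff$ $z_\rho(\lambda) = 1/2$. (Here I also use that $z_\rho$ only takes the values $1$ and $1/2$, so the negation of ``$z_\rho(\lambda)=1$'' is ``$z_\rho(\lambda)=1/2$''.)

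The main point requiring a little care is the first step: justifying the correspondence between real embeddings of $\Qbar^H$ and conjugates of $\tau$ lying in $H$. I would argue directly rather than quoting a black box: an embedding $\psi: \Qbar^H \to \C$ extends to some $g \in \Gal(\Qbar/\Q)$ (restricted to $\Qbar^H$), and since $\Qbar^H$ is Galois over $\Q$ it is stable under complex conjugation, so $\psi$ is real $\iff$ $\tau$ fixes $\psi(\Qbar^H) = g(\Qbar^H)$ pointwise $\iff$ $g^{-1}\tau g$ fixes $\Qbar^H$ pointwise $\iff$ $g^{-1}\tau g \in H$ (using $\Qbar^{g^{-1}\tau g \text{-fixed}} \supseteq \Qbar^H$ $\iff$ $\langle g^{-1}\tau g\rangle \subseteq H$ by the Galois correspondence, noting $g^{-1}\tau g$ is again complex conjugation for the embedding $g$, hence of order $\leq 2$). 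A minor edge case to mention: if $\Qbar \subset \R$ is impossible, but the convention fixes $\tau = \mathrm{id}$ when $L \subset \R$; working over $\Qbar$ this degeneracy does not arise since $\Qbar \not\subset \R$, so $\tau$ is a genuine order-two element and the argument goes through uniformly. This should be a short, clean proof once the real-embedding criterion is stated.
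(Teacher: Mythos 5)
Your argument is correct and follows essentially the same route as the paper: both proofs reduce ``totally imaginary'' to the statement that no conjugate of $\tau$ lies in $H=\Stab_\rho(\lambda)$ (the paper phrases this as $\tau\notin\Gal(\Qbar/\sigma(\Qbar^H))=\sigma H\sigma^{-1}$ for all $\sigma$), and then translate $\sigma^{-1}\tau\sigma\in H$ into $\rho_{\tau\sigma}(\lambda)=\rho_\sigma(\lambda)$, which is exactly the definition of $z_\rho(\lambda)=1$. One inaccuracy to fix: your parenthetical claim that ``$\Qbar^H$ is Galois over $\Q$'' is false in general, since $\Stab_\rho(\lambda)$ need not be normal in $\Gal(\Qbar/\Q)$ (e.g.\ in the paper's final example the stabilizer corresponds to a non-normal order-two subgroup of a dihedral group); this is precisely why the paper works with the conjugate fields $\sigma(\Qbar^H)=\Qbar^{\sigma H\sigma^{-1}}$ rather than with $\Qbar^H$ alone. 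Fortunately the claim is not load-bearing: the step it was meant to justify, namely that $g|_{\Qbar^H}$ is real if and only if $\tau$ fixes $g(\Qbar^H)$ pointwise, is just the definition of a real embedding, and the rest of your chain only uses $\Gal(\Qbar/\Qbar^H)=H$ for the closed (indeed open, finite-index) subgroup $H$, which is valid. Deleting the Galois assertion leaves a clean and complete proof.
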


	\begin{proof}
		Note that $\Qbar^H$ is totally imaginary if and only if for any $\sigma \in \Gal(\Qbar/\Q)$ we have
		\[\tau \notin \Gal\left(\Qbar\middle/\sigma\left(\Qbar^H\right)\right).\]
		We have the series of equivalences
		\begin{align*}
			\tau \in \Gal\left(\Qbar\middle/\sigma\left({\Qbar}^H \right)\right) &\Leftrightarrow \tau \in \Gal\left(\Qbar\middle/{\Qbar}^{\sigma H \sigma^{-1}}\right)\\
			&\Leftrightarrow \tau \in \sigma H \sigma^{-1}\\
			&\Leftrightarrow \tau \in \Stab_\rho(\rho_\sigma(\lambda))\\
			&\Leftrightarrow \rho_{\tau \sigma}(\lambda) = \rho_\sigma(\lambda).
		\end{align*}
		Thus, $\Qbar^H$ is totally imaginary if and only if for any $\sigma \in \Gal(\Qbar/\Q)$ we have $\rho_{\tau\sigma}(\lambda) \neq \rho_\sigma(\lambda)$, which is by definition equivalent with $z_\rho(\lambda) = 1/2$.
	\end{proof}
	
	The next lemma is a result on the existence of connected subsets of coherent components satisfying a property with respect to a given involution of the quotient graph.
	
	\begin{lemma}
		\label{lem:existenceSetB}
		Let $\G = (S,E)$ be a graph, $\Lambda$ its set of coherent components and $\iota \in \Aut(\overline{\G})$ an involution, i.e. $\iota^2 = \Id$. Take any connected subset $A \subset \Lambda$ such that for all $\lambda \in \Lambda$ with $\iota(\lambda) \neq \lambda$ it holds that $A \neq \{\lambda, \iota(\lambda)\}$. Then there exists a connected subset $B \subset \Lambda$ such that $A \cup \iota(A) = B \cup \iota(B)$ and for any $\lambda \in B$ with $\lambda \neq \iota(\lambda)$ we have $\iota(\lambda) \notin B$.
	\end{lemma}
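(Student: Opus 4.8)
The plan is to build the set $B$ from $A$ by a greedy/inductive argument, removing one half of each "conjugate pair" that $A$ contains while preserving connectedness of the union $A \cup \iota(A)$. First I would set $C = A \cup \iota(A)$; note that $C$ is $\iota$-invariant and, being a union of a connected set with its image under the graph automorphism $\iota$ (which maps the connected set $\iota(A)$ to the connected set $A$, sharing at least the vertices... wait, no — $A$ and $\iota(A)$ need not overlap), $C$ is a union of at most two connected pieces that are swapped by $\iota$; in particular every connected component of $C$ meets $A$. The goal is to choose, inside $C$, a connected subset $B$ hitting every $\iota$-orbit in $C$ exactly the "right" number of times: once for a fixed component $\lambda = \iota(\lambda)$, and exactly one of $\{\lambda, \iota(\lambda)\}$ for a genuine pair. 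Then automatically $B \cup \iota(B) = C = A \cup \iota(A)$ and $B$ contains no genuine pair.

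The key steps, in order: (1) Reduce to the case that $C$ is connected. If $C$ has two components $C_1, C_2$ swapped by $\iota$, then $\iota$ maps $C_1$ bijectively onto $C_2$, so taking $B = C_1$ already works: $B$ is connected, $B \cup \iota(B) = C_1 \cup C_2 = C$, and since $C_1 \cap C_2 = \emptyset$ no $\lambda \in B$ has $\iota(\lambda) \in B$. (Here one also checks $\iota$ has no fixed point in a swapped $C_i$.) (2) Assume $C$ is connected. Consider a spanning tree $T$ of the subgraph on the vertex set $\bigcup_{\lambda \in C}\lambda$, organized at the level of coherent components. I would root $T$ and peel off leaves: repeatedly, take a leaf $\lambda$ of the current tree such that $\iota(\lambda) \ne \lambda$ and $\iota(\lambda)$ is still present; removing $\lambda$ keeps the component set connected (a leaf removal) and keeps it $\iota$-invariant only if we also track $\iota(\lambda)$. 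The cleaner route: produce $B$ directly as follows. Take a connected, $\iota$-invariant $C$ and a spanning tree structure; then use the hypothesis on $A$ — namely $A \ne \{\lambda, \iota(\lambda)\}$ for any genuine pair — to guarantee that $A$ itself already is large/connected enough that we can pick $B$ to be $A$ with each genuine pair $\{\lambda,\iota(\lambda)\} \subset A$ thinned to one element, chosen along the tree order so connectedness survives.

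The main obstacle — and the step I'd spend the most care on — is showing connectedness is preserved when we delete one member of a conjugate pair $\{\lambda, \iota(\lambda)\}$ from $A$. Deleting an internal (non-leaf) vertex of a tree disconnects it, so naively removing "one of each pair" fails. The fix is to process pairs in a good order: if $\{\lambda, \iota(\lambda)\}$ both lie in $A$, I want to keep whichever one is closer to a chosen root (or lies on more tree-paths), so that after removal the remaining set is still the vertex set of a subtree. Concretely I would argue by induction on the number of genuine $\iota$-pairs contained in $B$ (starting $B = A$): if there is such a pair, pick one where at least one member, say $\iota(\lambda)$, is a leaf of the spanning tree of $\bigcup_{\mu \in B}\mu$ that is \emph{not} also needed to connect $\lambda$ — such a leaf exists because $B \ne \{\lambda,\iota(\lambda)\}$ forces $B$ to contain a third component, hence the tree on $B$ has a leaf outside $\{\lambda,\iota(\lambda)\}$ or one of $\lambda,\iota(\lambda)$ is itself a removable leaf — remove $\iota(\lambda)$, and observe $B' = B \setminus \{\iota(\lambda)\}$ is still connected and still satisfies $A \cup \iota(A) \subset B' \cup \iota(B') \subset A \cup \iota(A)$ since $\iota(\lambda) = \iota(\lambda)$ with $\lambda \in B'$. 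This reduces the pair count, and at termination $B$ has no genuine pair, giving the claim. The delicate point to nail down is exactly why a removable leaf in $\{\lambda,\iota(\lambda)\}$ (or the ability to reroute) always exists; I expect the hypothesis "$A \ne \{\lambda,\iota(\lambda)\}$" plus connectedness of $C$ to be precisely what makes this work.
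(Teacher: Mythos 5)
Your step (1) (reducing to the case where $A \cup \iota(A)$ is connected) is fine, and your target description of $B$ (all $\iota$-fixed components of $A\cup\iota(A)$ plus exactly one member of each genuine pair) is the right one. The genuine gap is exactly the point you flagged as delicate: your construction keeps $B \subset A$ throughout (start with $B=A$, delete one member of each $\iota$-pair, chosen as a leaf of a spanning tree), and such a removable member need not exist --- indeed, in general \emph{no} subset of $A$ satisfies the conclusion, so no amount of care in choosing which member of a pair to delete can save the scheme. The paper's second example following the lemma already witnesses this: $\Lambda=\{\lambda_1,\ldots,\lambda_7\}$, $\iota$ fixes $\lambda_2$ and swaps $\lambda_1\leftrightarrow\lambda_3$, $\lambda_4\leftrightarrow\lambda_5$, $\lambda_6\leftrightarrow\lambda_7$, and $A=\{\lambda_2,\lambda_3,\lambda_4,\lambda_5,\lambda_6\}$. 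The only genuine pair inside $A$ is $\{\lambda_4,\lambda_5\}$, and the subgraph induced on $A$ is the path $\lambda_2,\lambda_3,\lambda_5,\lambda_4,\lambda_6$ (in that order), so neither $\lambda_4$ nor $\lambda_5$ is a leaf and deleting either one disconnects $A$. Moreover, any admissible $B$ must contain $\lambda_2$ and exactly one member of each of the three pairs; since $\lambda_1,\lambda_7\notin A$, a $B\subset A$ would have to be $\{\lambda_2,\lambda_3,\lambda_4,\lambda_6\}$ or $\{\lambda_2,\lambda_3,\lambda_5,\lambda_6\}$, and both are disconnected. The sets that do work, $\{\lambda_2,\lambda_3,\lambda_5,\lambda_7\}$ and $\{\lambda_1,\lambda_2,\lambda_4,\lambda_6\}$, each contain a component lying outside $A$.

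So the missing idea is a mechanism for \emph{substituting $\iota$-images} of components of $A$ rather than merely thinning $A$. The paper's proof supplies exactly this: it inducts on $|A|$, removes a component $\lambda$ whose deletion keeps $A'=A\setminus\{\lambda\}$ connected and which has a neighbour $\mu\in A'$ in $\overline{\G}$, applies the induction hypothesis to get $B'$ with $B'\cup\iota(B')=A'\cup\iota(A')$, and then adjoins to $B'$ either $\lambda$ (when $\mu\in B'$) or $\iota(\lambda)$ (when instead $\iota(\mu)\in B'$, using that $\iota$ maps the edge $\{\lambda,\mu\}$ to the edge $\{\iota(\lambda),\iota(\mu)\}$). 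Your leaf-deletion procedure has no analogue of this swap, and the hypothesis $A\neq\{\lambda,\iota(\lambda)\}$ is not what rescues it (that hypothesis only rules out the degenerate two-element case, as in the complete bipartite example); without the substitution step the argument cannot be completed.
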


	Before proving it, we first illustrate it with $2$ different examples. 
	\begin{example}
		\label{ex:completeBiparteGraph}
		The above lemma requires the set $A \subset \Lambda$ to satisfy $A \neq \{\lambda, \iota(\lambda)\}$ for any $\lambda \in \Lambda$ with $\lambda \neq \iota(\lambda)$. This condition is indeed necessary by considering the following example. Take any positive integer $n > 1$ and let $S = \{\alpha_1,\ldots, \alpha_n, \beta_1, \ldots, \beta_n \}$. Define $E = \{\{\alpha_k, \beta_l\} \mid 1 \leq k, l \leq n\}$. The resulting graph $\G = (S, E)$ is the \textit{complete biparte graph on $n + n$ vertices} and is drawn below for $n = 3$.
		\begin{figure}[H]
			\centering
			\begin{tikzpicture}
				\filldraw (-0.5,0) circle (2pt) node[anchor = east] {$\alpha_2\:$};
				\filldraw (-0.5,1) circle (2pt) node[anchor = east] {$\alpha_1\:$};
				\filldraw (-0.5,-1) circle (2pt) node[anchor = east] {$\alpha_3\:$};
				\filldraw (0.5,0) circle (2pt) node[anchor = west] {$\:\beta_2$};
				\filldraw (0.5,1) circle (2pt) node[anchor = west] {$\:\beta_1$};
				\filldraw (0.5,-1) circle (2pt) node[anchor = west] {$\:\beta_3$};
				\draw (-0.5,0) -- (0.5,0);
				\draw (-0.5,0) -- (0.5,1);
				\draw (-0.5,0) -- (0.5,-1);
				\draw (-0.5,1) -- (0.5,0);
				\draw (-0.5,1) -- (0.5,1);
				\draw (-0.5,1) -- (0.5,-1);
				\draw (-0.5,-1) -- (0.5,0);
				\draw (-0.5,-1) -- (0.5,1);
				\draw (-0.5,-1) -- (0.5,-1);
			\end{tikzpicture}
		\end{figure}
		The coherent components are given by $\Lambda = \{ \lambda_1 = \{\alpha_1, \alpha_2, \alpha_3\}, \lambda_2 = \{\beta_1, \beta_2, \beta_3\} \}$. Note that $A := \{\lambda_1, \lambda_2\}$ is connected since $\lambda_1 \cup \lambda_2 = S$ is a connected set of vertices. Let $\iota \in \Aut(\overline{\G})$ denote the involution defined by $\iota(\lambda_1) = \lambda_2$. It is clear that the above lemma can not be valid for this choice of set $A$ since $\lambda_1$ and $\lambda_2$ are each not connected subsets in $\G$ and thus $\{\lambda_1\}$ and $\{\lambda_2\}$ are each not a connected subset of coherent components.
	\end{example}
	\begin{example}
		To illustrate the conclusion of Lemma \ref{lem:existenceSetB}, consider the graph $\G = (S, E)$ with $S = \{\alpha_1, \ldots, \alpha_7\}$ and $E = \{\{\alpha_1, \alpha_2\}, \{\alpha_2, \alpha_3\}, \{\alpha_1, \alpha_4\}, \{\alpha_4, \alpha_6\},  \{\alpha_3, \alpha_5\}, \{\alpha_5, \alpha_7\}, \{\alpha_4, \alpha_5\}\}$. The coherent components are simply all the singletons $\Lambda = \{\lambda_i = \{\alpha_i\} \mid 1 \leq i \leq 7\}$. Define the subset of coherent components $A = \{\lambda_2, \lambda_3, \lambda_5, \lambda_4, \lambda_6\} \subset \Lambda$, which is easily seen to be connected. Let $\iota \in \Aut(\overline{\G})$ be the involution defined by $\iota(\lambda_{2}) = \lambda_{2}$, $\iota(\lambda_1) = \lambda_3$, $\iota(\lambda_4) = \lambda_5$ and $\iota(\lambda_6) = \lambda_7$. The graph and its quotient graph are drawn below. The subset $A$ is drawn in red.
		\begin{figure}[H]
			\centering
			\begin{tikzpicture}
				\begin{scope}[shift={(-3, 0)}]
					\draw (-1, 1) -- (-1, 0);
					\draw (-1, 0) -- (-1, -1);
					\draw (-1, 1) -- (0, 1);
					\draw (0, 1) -- (1, 1);
					\draw (-1, -1) -- (0, -1);
					\draw (0, -1) -- (1, -1);
					\draw (0, 1) -- (0, -1);
					\filldraw (-1,0) circle (2pt) node[anchor = east] {$\alpha_2\:$};
					\filldraw (-1,1) circle (2pt) node[anchor = east] {$\alpha_1\:$};
					\filldraw (-1,-1) circle (2pt) node[anchor = east] {$\alpha_3\:$};					
					\filldraw (0,1) circle (2pt) node[above = 0.15] {$\alpha_4$};
					\filldraw (0,-1) circle (2pt) node[below = 0.15] {$\alpha_5$};					
					\filldraw (1,1) circle (2pt) node[anchor = west] {$\:\alpha_6$};
					\filldraw (1,-1) circle (2pt) node[anchor = west] {$\:\alpha_7$};
					\node at (-2.5,1.5) {$\G$};
				\end{scope}
				\begin{scope}[shift={(3, 0)}]
					\draw[dashed] (-3, 0) -- (3, 0);
					\draw (-1, 1) -- (-1, 0);
					\draw[very thick, red] (-1, 0) -- (-1, -1);
					\draw (-1, 1) -- (0, 1);
					\draw[very thick,color=red] (0, 1) -- (1, 1);
					\draw[very thick,color=red] (-1, -1) -- (0, -1);
					\draw (0, -1) -- (1, -1);
					\draw[very thick,color=red] (0, 1) -- (0, -1);
					\filldraw[color=red] (-1,0) circle (2pt) node[anchor = east] {$\lambda_2\:$};
					\filldraw (-1,1) circle (2pt) node[anchor = east] {$\lambda_1\:$};
					\filldraw[color=red] (-1,-1) circle (2pt) node[anchor = east] {$\lambda_3\:$};					
					\filldraw[color=red] (0,1) circle (2pt) node[above = 0.15] {$\lambda_4$};
					\filldraw[color=red] (0,-1) circle (2pt) node[below = 0.15] {$\lambda_5$};					
					\filldraw[color=red] (1,1) circle (2pt) node[anchor = west] {$\:\lambda_6$};
					\filldraw (1,-1) circle (2pt) node[anchor = west] {$\:\lambda_7$};
					\draw [<->] (2.8, -0.5) to [out=30,in=-30] (2.8, 0.5);
					\node at (3.3,0) {$\iota$};
					\node at (-2.5,1.5) {$\overline{\G}$};
					\node[color=red] at (2,1.5) {$A$};
				\end{scope}
			\end{tikzpicture}
		\end{figure}
		\noindent Lemma \ref{lem:existenceSetB} gives a set $B$ such that $B \cup \iota(B) = A \cup \iota(A)$ and for any $\lambda \in B$ with $\lambda \neq \iota(\lambda)$ it holds that $\iota(\lambda) \notin B$. In this example, such a set $B$ can be given by either $B = \{ \lambda_2, \lambda_3, \lambda_5, \lambda_7 \}$ or $B = \{\lambda_2, \lambda_1, \lambda_4, \lambda_6\}$. These sets are drawn below in blue.
		\begin{figure}[H]
			\centering
			\begin{tikzpicture}
				\begin{scope}[shift={(-3, 0)}]
					\draw (-1, 1) -- (-1, 0);
					\draw[very thick,color=blue] (-1, 0) -- (-1, -1);
					\draw (-1, 1) -- (0, 1);
					\draw (0, 1) -- (1, 1);
					\draw[very thick,color=blue] (-1, -1) -- (0, -1);
					\draw[very thick,color=blue] (0, -1) -- (1, -1);
					\draw (0, 1) -- (0, -1);
					\filldraw[color=blue] (-1,0) circle (2pt) node[anchor = east] {$\lambda_2\:$};
					\filldraw (-1,1) circle (2pt) node[anchor = east] {$\lambda_1\:$};
					\filldraw[color=blue] (-1,-1) circle (2pt) node[anchor = east] {$\lambda_3\:$};					
					\filldraw (0,1) circle (2pt) node[above = 0.15] {$\lambda_4$};
					\filldraw[color=blue] (0,-1) circle (2pt) node[below = 0.15] {$\lambda_5$};
					\filldraw (1,1) circle (2pt) node[anchor = west] {$\:\lambda_6$};
					\filldraw[color=blue] (1,-1) circle (2pt) node[anchor = west] {$\:\lambda_7$};
					\node[color=blue] at (2,-1.5) {$B$};
				\end{scope}
				\begin{scope}[shift={(3, 0)}]
					\draw[very thick,color=blue] (-1, 1) -- (-1, 0);
					\draw (-1, 0) -- (-1, -1);
					\draw[very thick,color=blue] (-1, 1) -- (0, 1);
					\draw[very thick,color=blue] (0, 1) -- (1, 1);
					\draw (-1, -1) -- (0, -1);
					\draw (0, -1) -- (1, -1);
					\draw (0, 1) -- (0, -1);
					\filldraw[color=blue] (-1,0) circle (2pt) node[anchor = east] {$\lambda_2\:$};
					\filldraw[color=blue] (-1,1) circle (2pt) node[anchor = east] {$\lambda_1\:$};
					\filldraw (-1,-1) circle (2pt) node[anchor = east] {$\lambda_3\:$};
					\filldraw[color=blue] (0,1) circle (2pt) node[above = 0.15] {$\lambda_4$};
					\filldraw (0,-1) circle (2pt) node[below = 0.15] {$\lambda_5$};
					\filldraw[color=blue] (1,1) circle (2pt) node[anchor = west] {$\:\lambda_6$};
					\filldraw (1,-1) circle (2pt) node[anchor = west] {$\:\lambda_7$};
					\node[color=blue] at (2, 1.5) {$B$};
				\end{scope}
			\end{tikzpicture}
		\end{figure}
	\end{example}
	
	\begin{proof}[Proof of Lemma \ref{lem:existenceSetB}:]
		We will first prove the cases where $A$ counts $1$, $2$ or $3$ elements and then proceed by induction on $|A|$. If $|A| = 1$, we can just take $B = A$. If $|A| = 2$, the assumptions on $A$ imply that $\iota(\lambda) \neq \lambda$ for any $\lambda \in A$. Thus we can again take $B = A$. If $|A| = 3$, we can assume that there is a coherent component $\lambda \in A$ such that $\iota(\lambda) \neq \lambda$ and $\iota(\lambda) \in A$, otherwise we can just take $B = A$. Without loss of generality we can then write $A = \{\lambda_1, \lambda_2, \lambda_3\}$ with $\iota(\lambda_2) = \lambda_3$. Since $A$ is connected, we must have that either $\{\lambda_1, \lambda_2\} \in \overline{E}$, in which case we take $B = \{\lambda_1, \lambda_2\}$ or that $\{\lambda_1, \lambda_3\} \in \overline{E}$, in which case we take $B = \{\lambda_1, \lambda_3\}$.
		
		Next, assume that $|A| > 3$ and that the theorem holds for lower cardinalities of $A$. As a basic property of connected subsets of graphs (which also lifts to connected sets of coherent components containing at least 3 coherent components), there exists a coherent component $\lambda \in A$ such that $A' = A \setminus \{\lambda\}$ is still connected. In addition, there must exist an element $\mu \in A'$ such that $\{\lambda, \mu\} \in \overline{E}$ since $A$ was assumed connected. We can apply the induction hypothesis on $A'$ and get a connected set $B' \subset \Lambda$ such that $A' \cup \iota(A') = B' \cup \iota(B')$ and for any $\lambda \in B'$ with $\lambda \neq \iota(\lambda)$ we have $\iota(\lambda) \notin B'$. The following three cases are to be considered:
		\begin{itemize}
			\item $\iota(\lambda) \in A'$. In this case, we have that $A \cup \iota(A) = A' \cup \iota(A') = B' \cup \iota(B')$. By consequence we find that $B := B'$ satisfies the required properties.
			
			\item $\iota(\lambda) \notin A'$ and $\mu \in B'$. Then it follows from $\{\lambda, \mu\} \in \overline{E}$, that $B := B' \cup \{\lambda\}$ is connected. We also have
			\begin{align*}
				B \cup \iota(B) &= B' \cup \{\lambda\} \cup \iota(B' \cup \{\lambda\})\\
				&= B' \cup \iota(B') \cup \{\lambda\} \cup \{\iota(\lambda)\}\\
				&= A' \cup \iota(A') \cup \{\lambda\} \cup \{\iota(\lambda)\}\\
				&= A' \cup \{\lambda\} \cup \iota(A' \cup \{\lambda\})\\
				&= A \cup \iota(A)
			\end{align*}
			and $\iota(\lambda) \notin B$ for any $\lambda \in B$ with $\lambda \neq \iota(\lambda)$. Thus $B$ satisfies all required properties.
			
			\item $\iota(\lambda) \notin A'$ and $\mu \notin B'$. Then because $\mu \in A' \cup \iota(A') = B' \cup \iota(B')$, we must have $\iota(\mu) \in B'$. Since $\iota \in \Aut(\overline{\G})$ and $\{\lambda, \mu\} \in \overline{E}$, we must have that $\{ \iota(\lambda), \iota(\mu) \} \in \overline{E}$. By consequence we have that $B := B' \cup \{ \iota(\lambda) \}$ is connected. We also have
			\begin{align*}
				B \cup \iota(B) &= B' \cup \{\iota(\lambda)\} \cup \iota(B' \cup \{\iota(\lambda)\})\\
				&= B' \cup \iota(B') \cup \{\lambda\} \cup \{\iota(\lambda)\}\\
				&= A' \cup \iota(A') \cup \{\lambda\} \cup \{\iota(\lambda)\}\\
				&= A' \cup \{\lambda\} \cup \iota(A' \cup \{\lambda\})\\
				&= A \cup \iota(A)
			\end{align*}
			and $\iota(\lambda) \notin B$ for any $\lambda \in B$ with $\lambda \neq \iota(\lambda)$. Thus $B$ satisfies all required properties.
		\end{itemize}
		This concludes the proof.
	\end{proof}
		
	At last, we prove a lemma which helps us find eigenvalues equal to $\pm 1$ in certain vertex diagonal automorphism.
	
	\begin{lemma}
		\label{lem:productEqualsPmOne}
		Let $\G = (S, E)$ be a graph and $\Psi:S \to \Qbar$ a function satisfying conditions \ref{item:thmeigenvalcond1} and \ref{item:thmeigenvalcond3} of Theorem \ref{thm:AnosovEigenvalueCond}. If $A \subset \Lambda$ is a $\rho$-invariant subset and $f:\Lambda \to \Z^{\geq 0}$ a $\rho$-invariant function, then
		\[ \prod_{\lambda \in A} \prod_{\alpha \in \lambda} \Psi(\alpha)^{f(\lambda)} = \pm 1.\]
	\end{lemma}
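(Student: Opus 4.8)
The plan is to show that the element
\[
N := \prod_{\lambda \in A}\ \prod_{\alpha\in\lambda}\Psi(\alpha)^{f(\lambda)}
\]
is a \emph{rational} algebraic unit, since then $N=\pm 1$ follows immediately: by condition \ref{item:thmeigenvalcond1} each $\Psi(\alpha)$ is an algebraic unit, hence so is the finite product $N$ (in particular $N\in\Qbar$ and $N\neq 0$); and a rational number that is an algebraic integer lies in $\Z$, so a rational algebraic unit must be $\pm1$. Thus the whole problem reduces to proving $N\in\Q$, i.e.\ that $\sigma(N)=N$ for every $\sigma\in\Gal(\Qbar/\Q)$.

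To establish this I would compute $\sigma(N)$ by Galois descent. Writing $\sigma(\Psi(\alpha))=(\sigma\circ\Psi)(\alpha)$ and invoking condition \ref{item:thmeigenvalcond3}, the equality $[\sigma\circ\Psi]=\sigma\cdot[\Psi]=[\Psi]\cdot\rho_\sigma=[\Psi\circ r(\rho_\sigma)]$ in $\mathcal{H}_\G^{\Qbar}$ yields some $\theta\in\prod_{\mu\in\Lambda}\Perm(\mu)$ with $\sigma\circ\Psi=\Psi\circ r(\rho_\sigma)\circ\theta$. Now fix $\lambda\in A$ and reindex the inner product $\prod_{\alpha\in\lambda}$ twice: the component $\theta|_\lambda$ is a bijection of $\lambda$ onto itself, and by the defining formula $r(\varphi)(v_{\lambda j})=v_{\varphi(\lambda)j}$ the map $r(\rho_\sigma)$ restricts to a bijection $\lambda\to\rho_\sigma(\lambda)$, so $\prod_{\alpha\in\lambda}(\sigma\circ\Psi)(\alpha)=\prod_{\beta\in\rho_\sigma(\lambda)}\Psi(\beta)$. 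Since $f$ is $\rho$-invariant the exponent attached to $\rho_\sigma(\lambda)$ equals $f(\rho_\sigma(\lambda))$, hence $\sigma(N)=\prod_{\lambda\in A}\prod_{\beta\in\rho_\sigma(\lambda)}\Psi(\beta)^{f(\rho_\sigma(\lambda))}$; and since $A$ is $\rho$-invariant, $\mu\mapsto\rho_\sigma^{-1}(\mu)$ permutes $A$, so this equals $\prod_{\mu\in A}\prod_{\beta\in\mu}\Psi(\beta)^{f(\mu)}=N$. Therefore $N\in\Qbar^{\Gal(\Qbar/\Q)}=\Q$, which finishes the argument.

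The proof requires no analytic input; the only step needing care is the double reindexing of $\prod_{\lambda\in A}\prod_{\alpha\in\lambda}$, namely tracking that $\theta$ acts inside each coherent component while $r(\rho_\sigma)$ transports $\lambda$ to $\rho_\sigma(\lambda)$, and observing that the $\rho$-invariance of $f$ (together with that of $A$) is exactly what makes the exponents and the outer index set survive this transport. Everything else is the elementary fact about rational algebraic units stated in the first paragraph.
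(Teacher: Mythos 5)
Your proof is correct and follows essentially the same route as the paper: establish Galois invariance of the product via condition (iii), reindex using $\theta$, $r(\rho_\sigma)$, and the $\rho$-invariance of $f$ and $A$, then invoke condition (i) to conclude the product is an algebraic unit in $\Q$ and hence $\pm 1$. The double reindexing you flag as the delicate point is exactly the chain of equalities the paper carries out.
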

	\begin{proof}
		Take any $\sigma \in \Gal(\Qbar/\Q)$. Since $\Psi$ satisfies condition \ref{item:thmeigenvalcond3}, there exists a $\theta \in \prod_{\lambda \in \Lambda} \Perm(\lambda)$ such that $\sigma \circ \Psi = \Psi \circ r(\rho_\sigma) \circ \theta$. We thus have
		\begin{align*}
			\sigma\left(\prod_{\lambda \in A} \prod_{\alpha \in \lambda} \Psi(\alpha)^{f(\lambda)}\right) &= \prod_{\lambda \in A} \prod_{\alpha \in \lambda} (\sigma \circ \Psi)(\alpha)^{f(\lambda)}\\
			&=\prod_{\lambda \in A} \prod_{\alpha \in \lambda} (\Psi \circ r(\rho_\sigma) \circ \theta)(\alpha)^{f(\lambda)}\\
			&=\prod_{\lambda \in A} \prod_{\alpha \in \lambda} (\Psi \circ r(\rho_\sigma))(\alpha)^{(f \circ \rho_\sigma)(\lambda)}\\
			&=\prod_{\lambda \in A} \prod_{\alpha \in \rho_\sigma(\lambda)} \Psi(\alpha)^{f(\lambda)}\\
			&= \prod_{\lambda \in A} \prod_{\alpha \in \lambda} \Psi(\alpha)^{f(\lambda)}.
		\end{align*}
		This proves that $\left(\prod_{\lambda \in A} \prod_{\alpha \in \lambda} \Psi(\alpha)^{f(\lambda)}\right) \in \Q$. Since $\Psi$ satisfies condition \ref{item:thmeigenvalcond1}, every factor in this product is an algebraic unit and thus the product itself is an algebraic unit. The only algebraic units in $\Q$ are $1$ and $-1$, which proves the claim.
	\end{proof}
	
	We are now ready to prove the main theorem of the section, which is equivalent to Theorem \ref{thm:AnosovOrbitCondInjective}.

	\begin{theorem}
		\label{thm:AnosovOrbitCond}
		Let $\G = (S, E)$ be a simple undirected graph and $\rho: \Gal(\Qbar/\Q) \to \Aut(\overline{\G})$ a continuous morphism. Then the associated rational form $\n_{\rho, c}^\Q$ of $\n_{\G, c}^\C$ is Anosov if and only if for each non-empty connected set of coherent components $A \subset \Lambda$ such that $A \cup \rho_\tau(A)$ is $\rho$-invariant, it holds that
		\begin{equation}
			\label{eq:inequalityOrbitCondition}
			c < \sum_{\lambda \in A \cup \rho_\tau(A)} z_\rho(\lambda) \cdot |\lambda|.
		\end{equation}
	\end{theorem}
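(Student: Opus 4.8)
The plan is to deduce Theorem~\ref{thm:AnosovOrbitCond} from the eigenvalue criterion in Theorem~\ref{thm:AnosovEigenvalueCond}, so that $\n^\Q_{\rho,c}$ is Anosov precisely when there exists $\Psi\colon S\to\Qbar$ satisfying conditions \ref{item:thmeigenvalcond1}--\ref{item:thmeigenvalcond3}. For a coherent component $\mu$ write $N_\mu:=\prod_{\alpha\in\mu}\Psi(\alpha)$; condition \ref{item:thmeigenvalcond3} yields $\overline{N_\mu}=N_{\rho_\tau(\mu)}$ and $\sigma(N_\mu)=N_{\rho_\sigma(\mu)}$ for all $\sigma\in\Gal(\Qbar/\Q)$, Lemma~\ref{lem:productEqualsPmOne} applied to a single $\rho$-orbit $\mathcal{O}$ gives $\prod_{\mu\in\mathcal{O}}N_\mu=\pm1$, and Lemma~\ref{lem:totallyImaginaryStabField} identifies $z_\rho(\mu)=\tfrac12$ with $\rho_\tau$ acting without fixed point on $\Orb_\rho(\mu)$; moreover if $\rho_\tau(\mu)=\mu$ then $\Qbar^{\Stab_\rho(\mu)}\subset\R$, so $N_\mu\in\R$.

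\textbf{Necessity.} Arguing by contraposition, I fix a $\Psi$ with \ref{item:thmeigenvalcond1} and \ref{item:thmeigenvalcond3} and assume some non-empty connected $A\subseteq\Lambda$ has $C:=A\cup\rho_\tau(A)$ $\rho$-invariant with $c\geq\sum_{\mu\in C}z_\rho(\mu)|\mu|$; the goal is to produce a connected subgraph on at most $c$ vertices whose $\Psi$-product has absolute value $1$, violating \ref{item:thmeigenvalcond2}. I apply Lemma~\ref{lem:existenceSetB} with the involution $\iota=\rho_\tau$ (the excluded case $A=\{\mu,\rho_\tau(\mu)\}$ with $\rho_\tau(\mu)\neq\mu$ is treated by hand: then $C$ is a single $\rho_\tau$-orbit of size $2$, so $\prod_{\mu'\in C}N_{\mu'}=\pm1$ forces $|N_\mu|=1$, and if $\mu$ is internally disconnected one observes that the two components span a complete bipartite graph $K_{m,m}$ and uses a transversal of the conjugate pairs) to get a connected $B\subseteq\Lambda$ with $B\cup\rho_\tau(B)=C$ such that $\rho_\tau(\mu)\notin B$ whenever $\mu\in B$ and $\rho_\tau(\mu)\neq\mu$. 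Then $B$ meets every $\rho$-orbit $\mathcal{O}$ of $C$ in the $\rho_\tau$-fixed components of $\mathcal{O}$ together with one component out of each $\rho_\tau$-pair. Let $e$ be the weight function that is constant $w(\mu)$ on each $\mu\in B$ and zero elsewhere, with $w(\mu)=1$ if $\rho_\tau(\mu)=\mu$ and $w(\mu)=2z_\rho(\mu)$ otherwise; its support $\bigcup_{\mu\in B}\mu$ is connected. A direct computation with $\overline{N_\mu}=N_{\rho_\tau(\mu)}$ and $\prod_{\mu\in\mathcal{O}}N_\mu=\pm1$ shows that $\bigl|\prod_{\mu\in B\cap\mathcal{O}}N_\mu^{w(\mu)}\bigr|=1$ for each orbit $\mathcal{O}\subseteq C$, hence $\bigl|\prod_\alpha\Psi(\alpha)^{e(\alpha)}\bigr|=1$; and counting orbit by orbit gives $\sum_\alpha e(\alpha)=\sum_{\mu\in C}z_\rho(\mu)|\mu|\leq c$, as required.

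\textbf{Sufficiency.} Here $\Psi$ is constructed explicitly. For each $\rho$-orbit $\mathcal{O}$ fix a representative $\lambda_{\mathcal{O}}$ and put $K_{\mathcal{O}}:=\Qbar^{\Stab_\rho(\lambda_{\mathcal{O}})}$, a number field of degree $|\mathcal{O}|$ over $\Q$ having a real embedding if and only if $z_\rho(\lambda_{\mathcal{O}})=1$. Using Lemma~\ref{lem:existenceGaloisExtension} I pick a Galois extension $F_{\mathcal{O}}/K_{\mathcal{O}}$ of degree $|\lambda_{\mathcal{O}}|$ keeping the ratio of real to complex places, and using Lemma~\ref{lem:existHypUnit} (together with the cocompactness of the relevant unit lattices, which additionally allows the units in different $F_{\mathcal{O}}$ to be taken multiplicatively independent and $\xi_{\mathcal{O}}$ a generator of $F_{\mathcal{O}}/K_{\mathcal{O}}$) I pick an algebraic unit $\xi_{\mathcal{O}}\in F_{\mathcal{O}}$ with the $c$-hyperbolicity property. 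Define $\Psi$ on $\lambda_{\mathcal{O}}$ as the family of $|\lambda_{\mathcal{O}}|$ conjugates of $\xi_{\mathcal{O}}$ over $K_{\mathcal{O}}$ in an arbitrary order, and transport it over the rest of $\mathcal{O}$ by the Galois action exactly as in the proof of Theorem~\ref{thm:AnosovEigenvalueCond}; conditions \ref{item:thmeigenvalcond1} and \ref{item:thmeigenvalcond3} then hold by construction. For \ref{item:thmeigenvalcond2}, suppose a connected vertex set of size at most $c$ had $\Psi$-product of absolute value $1$; multiplicative independence splits the resulting relation $\sum_\alpha e(\alpha)\log|\Psi(\alpha)|=0$ into one relation per $\rho$-orbit $\mathcal{O}$ met by the support, and Lemma~\ref{lem:existHypUnit} forces the corresponding exponent vector on $\mathcal{O}$ to be balanced, which (using $z_\rho=\tfrac12\Leftrightarrow\rho_\tau$ fixed-point-free on $\mathcal{O}$) yields $\sum_{\alpha\in\mathcal{O}}e(\alpha)\geq\sum_{\mu\in\mathcal{O}}z_\rho(\mu)|\mu|$ and, moreover, that every component of $\mathcal{O}$ lies in $A_0\cup\rho_\tau(A_0)$, where $A_0$ is the set of components met by the support. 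Thus $A_0$ is a non-empty connected subset of $\Lambda$ with $A_0\cup\rho_\tau(A_0)$ equal to the ($\rho$-invariant) union $C'$ of the orbits met, and summing the orbit estimates gives $c\geq\sum_{\mu\in C'}z_\rho(\mu)|\mu|=\sum_{\mu\in A_0\cup\rho_\tau(A_0)}z_\rho(\mu)|\mu|$, contradicting the hypothesis applied to $A=A_0$.

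The technical heart, and the step I expect to be hardest, is the verification of \ref{item:thmeigenvalcond2} in the sufficiency part: one must carefully track how the balanced exponent patterns forced by Lemma~\ref{lem:existHypUnit} distribute over the individual coherent components of an orbit, keeping in mind that a coherent component is either a clique or an independent set, so that a connected support need not contain a component entirely. Proving that $A_0$ above is genuinely connected as a set of coherent components and that $A_0\cup\rho_\tau(A_0)$ is exactly the union of the orbits met is the crux, and is in a sense dual to the combinatorial content of Lemma~\ref{lem:existenceSetB} used in the necessity direction. The remaining ingredients --- Dirichlet's unit theorem via Lemma~\ref{lem:existHypUnit}, the eigenvalue description of Proposition~\ref{prop:eigenvaluesVertexDiag} already packaged into Theorem~\ref{thm:AnosovEigenvalueCond}, and the bookkeeping with complex conjugation through $\rho_\tau$ --- are routine once this combinatorial picture is in place.
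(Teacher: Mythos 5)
Your necessity argument is essentially the paper's own: the same reduction to Theorem \ref{thm:AnosovEigenvalueCond}, the same use of Lemma \ref{lem:existenceSetB} with $\iota=\rho_\tau$, the same weight (your $w$ is the paper's $g$), the same application of Lemma \ref{lem:productEqualsPmOne} orbit by orbit, and the same ``replace one vertex of $\mu$ by its conjugate partner in $\rho_\tau(\mu)$'' device for the excluded case $A=\{\mu,\rho_\tau(\mu)\}$. That half is correct, if terse.

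The sufficiency half has genuine gaps. (1) The splitting of the relation $\sum_\alpha e(\alpha)\log|\Psi(\alpha)|=0$ into one relation per orbit does not follow from ``multiplicative independence of the units in the different $F_{\mathcal{O}}$'': the relation involves absolute values of many distinct Galois conjugates of the $\xi_{\mathcal{O}}$ with exponents up to $c$, and independence of the units themselves says nothing about a coincidence such as $\log|\sigma(\xi_1)|+\log|\sigma'(\xi_1)|=-\log|\nu(\xi_2)|$. What you actually need is that no bounded-exponent relation among the conjugate moduli mixes orbits, a stronger statement that itself requires an argument (e.g. an orbit-by-orbit avoidance of finitely many affine subspaces of the log-unit lattices). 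The paper avoids any cross-orbit independence: it raises the orbit-$i$ values to a power $N_i$, shows that each potential bad weight $e$ gives a group morphism $\varphi_e$ on the tuples $(N_1,\ldots,N_l)$ which is \emph{nontrivial} (this is exactly where the hypothesis inequality, Lemma \ref{lem:existHypUnit} and Lemma \ref{lem:totallyImaginaryStabField} enter), and then picks a tuple outside the finite union of the kernels. (2) You explicitly leave unproven the step you call the crux: that $A_0$ is connected, that $A_0\cup\rho_\tau(A_0)$ is the union of the orbits met, and the per-orbit bound $\sum_\alpha e(\alpha)\geq\sum_{\mu}z_\rho(\mu)|\mu|$. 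In the paper this is extracted from the precise balanced pattern forced by Lemma \ref{lem:existHypUnit}: on a component sitting at a real place of $\Qbar^{H_i}$ the weight is constantly $k_i/2$, at a complex place each $\rho_\tau$-pair of vertices carries total weight $k_i$, and $k_i$ is even whenever a real place exists; connectivity of $A_0$ then follows because vertices in an anticlique component share their neighbourhood (plus a small check in the singleton-support case). Without this bookkeeping the contradiction with the hypothesis is not established. (3) Your extra requirement that $\xi_{\mathcal{O}}$ generate $F_{\mathcal{O}}/K_{\mathcal{O}}$ is both unnecessary (the paper assigns values through the cosets $\sigma_{ip}\gamma_{iq}$, repetitions allowed) and not justified by ``cocompactness'' alone: to find a hyperbolic unit outside all proper intermediate fields one must compare unit ranks, the only obstruction being the CM-type equality $s_E+t_E=s_F+t_F$, which does happen for general field pairs and has to be excluded in this construction rather than ignored.
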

	\begin{proof}
		Assume that $\n_{\rho, c}^\Q$ is Anosov. By Theorem \ref{thm:AnosovEigenvalueCond}, there exists a map $\Psi:S \to \Qbar$ which satisfies conditions \ref{item:thmeigenvalcond1}, \ref{item:thmeigenvalcond2} and \ref{item:thmeigenvalcond3} from that theorem. Take any non-empty connected subset $A \subset S$ such that $A \cup \rho_\tau(A)$ is $\rho$-invariant. We will prove that the inequality (\ref{eq:inequalityOrbitCondition}) holds.
		
		First assume that $A$ is not of the form $\{\lambda , \rho_\tau(\lambda)\}$ for some $\lambda \in \Lambda$ with $\rho_\tau(\lambda) \neq \lambda$. By Lemma \ref{lem:existenceSetB}, there exists a connected set $B \subset \Lambda$ such that $A \cup \rho_\tau(A) = B \cup \rho_\tau(B)$. Define the function
		\[ g:B \to \{1, 2\}: \lambda \mapsto \begin{cases}
			1 &\text{ if } z_\rho(\lambda) = 1/2\\
			2 &\text{ if } z_\rho(\lambda) = 1 \text{ and } \rho_\tau(\lambda) \neq \lambda\\
			1 &\text{ if } z_\rho(\lambda) = 1 \text{ and } \rho_\tau(\lambda) = \lambda.
		\end{cases} \]
		and note that it satisfies the equality
		\[\sum_{\lambda \in B} g(\lambda) = \sum_{\lambda \in A \cup \rho_\tau(A)} z_\rho(\lambda).\]
		Consider the algebraic unit
		\begin{equation}
			\label{eq:productZeta}
			\zeta = \prod_{\lambda \in B} \prod_{\alpha \in \lambda} \Psi(\alpha)^{g(\lambda)}.
		\end{equation}
		Let us prove that $|\zeta| = 1$. Let us write $X = z_{\rho}^{-1}(1)$ and $Y = z_\rho^{-1}(1/2)$. Note that $X \sqcup Y = \Lambda$ and that $X$ is exactly equal to the set of coherent components for which their $\rho$-orbit contains a fixed point of $\rho_\tau$. Since $\Psi$ satisfies \ref{item:thmeigenvalcond3}, there exists a $\theta \in \prod_{\lambda \in \Lambda} \Perm(\lambda)$ such that $\tau \circ \Psi = \Psi \circ r(\rho_\tau) \circ \theta$. We then have
		\begin{align*}
			|\zeta|^2 = \zeta \overline{\zeta} &= \prod_{\lambda \in B} \prod_{\alpha \in \lambda} \Psi(\alpha)^{g(\lambda)} \overline{\Psi(\alpha)}^{g(\lambda)}\\
			&= \prod_{\lambda \in B} \prod_{\alpha \in \lambda} \Psi(\alpha)^{g(\lambda)} (\Psi \circ r(\rho_\tau) \circ \theta)(\alpha)^{g(\lambda)}\\
			&= \prod_{\lambda \in B} \prod_{\alpha \in \lambda} \Psi(\alpha)^{g(\lambda)} (\Psi \circ r(\rho_\tau))(\alpha)^{g(\lambda)}\\
			&= \left( \prod_{\lambda \in B \cap X} \prod_{\alpha \in \lambda} \Psi(\alpha)^{g(\lambda)} (\Psi \circ r(\rho_\tau))(\alpha)^{g(\lambda)} \right) \cdot \left( \prod_{\lambda \in B \cap Y} \prod_{\alpha \in \lambda} \Psi(\alpha)^{g(\lambda)} (\Psi \circ r(\rho_\tau))(\alpha)^{g(\lambda)} \right)\\
			&= \left( \prod_{\substack{\lambda \in B \cup \rho_\tau(B)\\ \lambda \in X}} \prod_{\alpha \in \lambda} \Psi(\alpha)^2 \right) \left(  \prod_{\substack{\lambda \in B \cup \rho_\tau(B)\\ \lambda \in Y}} \prod_{\alpha \in \lambda} \Psi(\alpha) \right)\\
			&= \prod_{\lambda \in B \cup \rho_\tau(B)} \prod_{\alpha \in \lambda} \Psi(\alpha)^{2z_\rho(\lambda)}\\
			&= \prod_{\lambda \in A \cup \rho_\tau(A)} \prod_{\alpha \in \lambda} \Psi(\alpha)^{2z_\rho(\lambda)}.
		\end{align*}
		This last product satisfies all requirements to apply Lemma \ref{lem:productEqualsPmOne} and thus we find that $|\zeta|^2 = \pm 1$ which implies $|\zeta| = 1$. Using that $B$ is connected and that $\Psi$ satisfies condition \ref{item:thmeigenvalcond2}, we must thus have that the number of factors in the product in (\ref{eq:productZeta}) is strictly greater than $c$. The number of factors can be calculated as:
		\begin{align*}
			\sum_{\lambda \in B} \sum_{\alpha \in \lambda} g(\lambda) = \sum_{\lambda \in B \cup \rho_\tau(B)} z_\rho(\lambda) \cdot |\lambda| = \sum_{\lambda \in A \cup \rho_\tau(A)} z_\rho(\lambda) \cdot |\lambda|
		\end{align*}
		which proves that $c < \sum_{\lambda \in A \cup \rho_\tau(A)} z_\rho(\lambda) \cdot |\lambda|$.
		
		Now assume that there exists a $\mu \in \Lambda$ such that $A = \{\mu, \rho_\tau(\mu)\}$ and $\mu \neq \rho_\tau(\mu)$. Note that this implies that $A$ is $\rho$-invariant. Let $\theta \in \prod_{\lambda \in \Lambda} \Perm(\lambda)$ be the permutation such that $\tau \circ \Psi = \Psi \circ r(\rho_\tau) \circ \theta$. Choose a $\gamma \in \mu$ and define $\gamma' = r(\rho_\tau)(\theta(\gamma))$. Since $A$ is connected, we have that $\{\mu, \rho_\tau(\mu)\} \in \overline{E}$ and thus that $\{\gamma', \alpha \} \in E$ for any $\alpha \in \mu$. This implies that the set $\{\gamma'\} \cup (\mu \setminus \{\gamma\})$ is connected in $\G$. Note that
		\begin{align*}
			\left| \Psi(\gamma') \cdot \prod_{\alpha \in \mu \setminus \{\gamma\}} \Psi(\alpha) \right|^2 &= \left| (\Psi \circ r(\rho_\tau) \circ \theta)(\gamma) \cdot \prod_{\alpha \in \mu \setminus \{\gamma\}} \Psi(\alpha) \right|^2\\ 
			&= \left| \overline{\Psi(\gamma)} \cdot \prod_{\alpha \in \mu \setminus \{\gamma\}} \Psi(\alpha) \right|^2\\
			&= \prod_{\alpha \in \mu} \Psi(\alpha) \overline{\Psi(\alpha)}\\
			&= \prod_{\alpha \in \mu} \Psi(\alpha) \prod_{\beta \in \rho_\tau(\mu)} \Psi(\beta)\\
			&= 1
		\end{align*}
		where the last equation follows from applying Lemma \ref{lem:productEqualsPmOne}. Note that $z_\rho(\mu) = z_\rho(\rho_\tau(\mu)) = 1/2$. By condition \ref{item:thmeigenvalcond2} it follows that
		\[c < \left| \{\gamma'\} \cup (\mu \setminus \{\gamma\}) \right| = |\mu| = \sum_{\lambda \in A} z_\rho(\lambda)\cdot|\lambda|,\]
		which completes the proof of the `only if' direction.
		
		Conversely, assume that for any non-empty connected subset $A \subset \Lambda$ such that $A \cup \rho_\tau(A)$ is $\rho$-invariant inequality \ref{eq:inequalityOrbitCondition} holds. In what follows, we will construct a map $\Psi:S \to \Qbar$ which satisfies conditions \ref{item:thmeigenvalcond1}, \ref{item:thmeigenvalcond2} and \ref{item:thmeigenvalcond3} from Theorem \ref{thm:AnosovEigenvalueCond}, proving that $\n_{\rho, c}^\Q$ is Anosov.
		
		Let us list all the $\rho$-orbits as $\Orb_{\rho}(\lambda_1), \ldots, \Orb_{\rho}(\lambda_l)$ such that $\Lambda = \bigsqcup_{i = 1}^l \Orb_{\rho}(\lambda_i)$ and define \[S_i := \bigcup_{\mu \in \Orb_{\rho}(\lambda_i)} \mu.\]
		We first construct maps $\Psi_i:S_i \to \Qbar$ for any $1 \leq i \leq l$. We will write $H_i := \Stab_\rho(\lambda_i)$ to simplify notation. Note that
		\begin{equation}
		\label{eq:degreeStabField}
		\left[\Qbar^{H_i}: \Q\right] = \left[\Gal(\Qbar/\Q) : H_i \right] = \left|\Orb_{\rho}(\lambda_i)\right|
		\end{equation}
		where we used the Galois correspondence for the first equality and the orbit-stabilizer theorem for the second one. Note that $\Qbar^{H_i}$ is a number field and let $s_i$ be the number of real embeddings and $t_i$ the number of conjugated pairs of complex embeddings of $\Qbar^{H_i}$ in $\Qbar$. Lemma \ref{lem:existenceGaloisExtension} now gives us a Galois extension $F_i\big/\,\Qbar^{H_i}$ of degree $m_i := |\lambda_i|$ such that $F_i$ has $s_i \cdot m_i$ real embeddings and $t_i \cdot m_i$ conjugated pairs of complex embeddings. Applying Lemma \ref{lem:existHypUnit} on the field $F_i$, we get an algebraic unit $\xi_i \in F_i$ which satisfies (\ref{eq:chyperbolicity}). We list the cosets of $H_i$ in $\Gal(\Qbar/\Q)$ as $\sigma_{i1} H_i, \ldots , \sigma_{i n_i} H_i$ where $n_i := |\Orb_{\rho}(\lambda_i)| = s_i + 2t_i$. We also order the cosets of $\Gal(\Qbar/F_i)$ in $H_i$ as $\gamma_{i1} \Gal(\Qbar/ F_i), \ldots , \gamma_{i m_i} \Gal(\Qbar/F_i)$ and order the elements of $\lambda_i$ as $\alpha_{i1}, \ldots, \alpha_{i m_i}$. Note that $|S_i| = n_i \cdot m_i$. As a consequence we have parametrized the set $S_i$ by
		\[ S_i = \left\{ r(\rho_{\sigma_{ip}})(\alpha_{iq}) \: \middle| \: 1 \leq p \leq n_i, 1 \leq q \leq m_i \right\}. \]
		We define the map $\Psi_i$ as
		\[ \Psi_i: S_i \to \Qbar: r(\rho_{\sigma_{ip}})(\alpha_{iq}) \mapsto \sigma_{ip} \gamma_{iq} (\xi_i).  \]
		Next, we combine these maps to define for any tuple of integers $\vec{N} = (N_1, \ldots, N_l) \in \Z^l$ the map
		\[\Psi_{\vec{N}}: S\to \Qbar: \alpha \mapsto \Psi_i(\alpha)^{N_i} \quad \text{for} \quad \alpha \in S_i.\]
		Note that since $\xi_i$ is an algebraic unit for any $i$, it follows that the map $\Psi_{\vec{N}}$ satisfies condition \ref{item:thmeigenvalcond1} from Theorem \ref{thm:AnosovEigenvalueCond} for any $\vec{N} \in \Z^l$. One can check that $\Psi_{\vec{N}}$ satisfies condition \ref{item:thmeigenvalcond3} as well for any $\vec{N} \in \Z^l$. To finish the proof we will choose an $\vec{N} \in \Z^l$ such that condition \ref{item:thmeigenvalcond2} holds as well. 
		
		For any weight $e:S \to \Z^{\geq 0}$ define the additive group morphism
		\[ \varphi_e: \Z^l \to \R: \vec{N} \mapsto \log\left|\prod_{\alpha \in S} \Psi_{\vec{N}}(\alpha)^{e(\alpha)}\right| \]
		and define the set of weights
		\[ \mathcal{E}(\G, c) = \{ e:S \to \Z^{\geq 0} \mid \supp(e) \text{ is connected in $\G$ and } 0 < \sum_{\alpha \in S} e(\alpha) \leq c \}. \]
		Let us first prove by contradiction that $\varphi_e$ is not the trivial morphism for any $e \in \mathcal{E}(\G, c)$. Take any $e \in \mathcal{E}(\G, c)$ and assume that $\varphi_e$ is the trivial morphism. Choose any integer $i \in \{1, \ldots, l\}$. Let $\vec{N}_i = (0, \ldots, 0, 1, 0, \ldots, 0)$ be the $l$-tuple with a one on the $i$-th entry and $0$'s elsewhere. If $\varphi_e$ is the trivial morphism, then $\varphi_e(\vec{N}_i) = 0$. This exactly means that
		\[\left| \prod_{\mu \in \Orb_{\rho}(\lambda_i)} \prod_{\alpha \in \mu} \Psi_i(\alpha)^{e(\alpha)} \right| = 1.\]
		By the construction of $\Psi_i$ this gives
		\[ \left|\prod_{p = 1}^{n_i} \prod_{q = 1}^{m_i}  \sigma_{ip}\gamma_{iq} (\xi_i)^{e\left(r(\rho_{\sigma_{ip}})(\alpha_{iq})\right)}\right| = 1. \]
		Note that the restrictions $\sigma_{ip}:\Qbar^{H_i} \to \Qbar$ are exactly all the embeddings of $\Qbar^{H_i}$ into $\Qbar$. We can thus, up to reordering, write the automorphisms $\sigma_{ip}$, $1 \leq p \leq n_i$ as
		\[ \sigma_{i1},\ldots, \sigma_{is_i}, \sigma_{i s_i+1} , \ldots, \sigma_{i s_i +t_i}, \tau \sigma_{i s_i + t_i +1}, \ldots, \tau\sigma_{i s_i + 2t_i} \]
		where $\tau \in \Gal(\Qbar/\Q)$ is the complex conjugation automorphism and the automorphisms $\sigma_{i1}, \ldots, \sigma_{is_i}$ map $\Q^{H_i}$ into $\R \cap \Qbar$. The above equation can thus be rewritten as
		\begin{align*}
		1 &= \left| \prod_{q = 1}^{m_i} \left( \prod_{p = 1}^{s_i}   \sigma_{ip} \gamma_{iq} (\xi_i)^{e\left(r(\rho_{\sigma_{ip}})(\alpha_{iq})\right)} \cdot \prod_{p = 1}^{t_i} \sigma_{ip} \gamma_{iq} (\xi_i)^{e\left(r(\rho_{\sigma_{ip}})(\alpha_{iq})\right)}\cdot  \tau \sigma_{ip} \gamma_{iq} (\xi_i)^{e\left(r(\rho_{\tau \sigma_{ip}})(\alpha_{iq})\right)} \right)\right|\\
		&= \left| \prod_{q = 1}^{m_i} \left( \prod_{p = 1}^{s_i}   \sigma_{ip} \gamma_{iq} (\xi_i)^{e\left(r(\rho_{\sigma_{ip}})(\alpha_{iq})\right)} \cdot \prod_{p = 1}^{t_i} \sigma_{ip} \gamma_{iq} (\xi_i)^{e\left(r(\rho_{\sigma_{ip}})(\alpha_{iq})\right) + e\left(r(\rho_{\tau \sigma_{ip}})(\alpha_{iq})\right)}\right)\right|.
		\end{align*}
		Since the restrictions $\sigma_{ip} \gamma_{iq}: F_i \to \Qbar$ are exactly all the embeddings of $F_i$ into $\Qbar$, we get by the way $\xi_i$ was chosen in $F_i$ using Lemma \ref{lem:existHypUnit} that there is a non-negative integer $k_i \geq 0$ such that
		\begin{alignat*}{2}
		k_i/2 &= e\left(r(\rho_{\sigma_{ip}})(\alpha_{iq})\right) \quad \quad &\forall& 1 \leq p \leq s_i, 1 \leq q \leq m_i\\
		k_i &= e\left(r(\rho_{\sigma_{ip}})(\alpha_{iq})\right) + e\left(r(\rho_{\tau})(r(\rho_{\sigma_{ip}})(\alpha_{iq}))\right) \quad \quad &\forall& s_i+1 \leq p \leq s_i + t_i, 1 \leq q \leq m_i
		\end{alignat*}
		Note that $k_i$ is an even number whenever $s_i > 0$. Doing this for any $i \in \{1, \ldots, l\}$, we get that for the set
		\[A := \{\lambda \in \Lambda \mid \lambda \cap \supp(e) \neq \emptyset\}\]
		it holds that $A \cup \rho_{\tau}(A)$ is $\rho$-invariant. Since $\supp(e)$ is non-empty and connected in $\G$, it is clear that also $A \subset \Lambda$ is non-empty and connected. By the hypothesis of the theorem we must thus have that
		\[ c  < \sum_{\lambda \in A \cup \rho_\tau(A)} z_\rho(\lambda)\cdot |\lambda|. \]
		As we mentioned earlier $k_i$ is even whenever $s_i > 0$ or equivalently whenever $\Qbar^{H_i}$ is not totally imaginary. By Lemma \ref{lem:totallyImaginaryStabField}, we get that $k_i$ is even whenever $z_\rho(\lambda_i) = 1$. Thus, for any $i \in \{1, \ldots, l\}$ with $k_i \neq 0$, we have that $z_\rho(\lambda_i) \leq k_i/2$ and since $z_\rho$ is constant on $\rho$-orbits, that $z_\rho(\mu) \leq k_i/2$ for any $\mu \in \Orb_{\rho}(\lambda_i)$. Since $A \cup \rho_\tau(A)$ is $\rho$-invariant, there exists a subset $I \subset \{1, \ldots, l\}$ such that $A\cup \rho_\tau(A) = \bigcup_{i \in I} \Orb_{\rho}(\lambda_i)$. We then have that
		\[ c < \sum_{\lambda \in A \cup \rho_\tau(A)} z_\rho(\lambda)\cdot |\lambda| \: \leq \: \sum_{i \in I} \sum_{\mu \in \Orb_{\rho}(\lambda_i)} \frac{k_i}{2} \cdot |\lambda| \: = \: \sum_{\alpha \in S} e(\alpha) \]
		which is in contradiction with $\sum_{\alpha \in S} e(\alpha) \leq c$. This proves that $\varphi_e$ is not the trivial morphism for any $e \in \mathcal{E}(\G, c)$ and thus that $\ker(\varphi_e) \neq \Z^l$ for any $e \in \mathcal{E}(\G, c)$. Note that since $\varphi_e$ maps into the torsion-free group $(\R, +)$, the quotient $\Z^l/\ker(\varphi_e)$ is torsion-free as well. Thus for any $e \in \mathcal{E}(\G, c)$ we have that the free abelian group $\ker(\varphi_e)$ has rank at most $l-1$. From this observation, it follows that 
		\[ B := \bigcup_{e \in \mathcal{E}(\G, c)} \ker(\varphi_e) \]
		is not equal to $\Z^l$ since the union is finite. Thus, there exists an element $\vec{M} \in \Z^l \setminus B$. As one can check $\Psi := \Psi_{\vec{M}}$ now satisfies all conditions of Theorem \ref{thm:AnosovEigenvalueCond} which proves what needed to be proven.
	\end{proof}

	If we only consider rational forms of the real Lie algebra $\n^\R_\G$, then $\rho_{\tau}$ is trivial and thus the above theorem simplifies to the following statement.
	
	\begin{cor}[Real version]
		\label{cor:GraphAnosovRatOrbitConditionReal}
		Let $\rho: \Gal(L/\Q) \to \Aut(\overline{\G})$ be an injective morphism, where $L/\Q$ is a real Galois extension. The associated rational form $\n_{\G,\rho}^\Q$ of $\n^{\R}_\G$ is Anosov if and only if for any non-empty connected $\rho$-invariant subset $A \subset \Lambda$, it holds that $c < \sum_{\lambda \in A} |\lambda|$.
	\end{cor}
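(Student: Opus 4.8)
The plan is to deduce this from Theorem \ref{thm:AnosovOrbitCond} by specializing to the case in which complex conjugation acts trivially on the quotient graph. First I would replace the injective morphism $\rho:\Gal(L/\Q)\to\Aut(\overline{\G})$ by its continuous extension $\overline{\rho}:\Gal(\Qbar/\Q)\to\Aut(\overline{\G})$, $\sigma\mapsto\rho(\sigma|_L)$, to which Theorem \ref{thm:AnosovOrbitCond} applies. As recalled below Theorem \ref{thm:injectiveVersionClassificationRationalForms}, the rational form $\n_{\overline{\rho},c}^\Q\subset\n_{\G,c}^{\Qbar}$ is isomorphic to $\n_{\rho,c}^\Q\subset\n_{\G,c}^{L}$, so it suffices to characterize when $\n_{\overline{\rho},c}^\Q$ is Anosov. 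Since $\overline{\rho}$ has the same image in $\Aut(\overline{\G})$ as $\rho$, the $\overline{\rho}$-orbits on $\Lambda$ coincide with the $\rho$-orbits, and a subset $A\subset\Lambda$ is $\overline{\rho}$-invariant if and only if it is $\rho$-invariant.

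Next I would use the hypothesis $L\subset\R$: the complex conjugation automorphism $\tau$ restricts to the identity on $L$, so $\overline{\rho}_\tau=\rho(\tau|_L)=\rho(\Id)=\Id_{\Aut(\overline{\G})}$. This has two consequences. First, $\overline{\rho}_\tau(A)=A$ for every $A\subset\Lambda$, so the condition of Theorem \ref{thm:AnosovOrbitCond} that $A\cup\overline{\rho}_\tau(A)$ be $\overline{\rho}$-invariant becomes simply that $A$ be $\rho$-invariant, and the sum in (\ref{eq:inequalityOrbitCondition}) runs over $A$ alone. Second, for every $\sigma\in\Gal(\Qbar/\Q)$ one has $\overline{\rho}_{\tau\sigma}=\overline{\rho}_\tau\,\overline{\rho}_\sigma=\overline{\rho}_\sigma$, so the first alternative in the definition (\ref{eq:definitionz_rho}) of $z_{\overline{\rho}}$ always holds; hence $z_{\overline{\rho}}(\lambda)=1$ for every $\lambda\in\Lambda$. (Equivalently, via Lemma \ref{lem:totallyImaginaryStabField}: each stabilizer field is a subfield of the totally real Galois extension $L$, hence totally real, so cannot be totally imaginary.)

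Substituting $\overline{\rho}_\tau=\Id$ and $z_{\overline{\rho}}\equiv 1$ into Theorem \ref{thm:AnosovOrbitCond}, we obtain that $\n_{\overline{\rho},c}^\Q$ is Anosov if and only if every non-empty connected $\rho$-invariant subset $A\subset\Lambda$ satisfies $c<\sum_{\lambda\in A}|\lambda|$; together with $\n_{\overline{\rho},c}^\Q\cong\n_{\rho,c}^\Q$ this is exactly the asserted characterization. I do not anticipate any genuine difficulty here — the corollary is a direct specialization of the main theorem — the only point needing care being the routine transfer of orbit and invariance data between $\rho$ on $\Gal(L/\Q)$ and $\overline{\rho}$ on $\Gal(\Qbar/\Q)$.
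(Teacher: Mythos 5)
Your proposal is correct and is essentially the paper's own argument: the paper deduces this corollary directly from Theorem \ref{thm:AnosovOrbitCond} by observing that $L \subset \R$ forces $\rho_\tau$ to be trivial, hence $z_\rho \equiv 1$ and $A \cup \rho_\tau(A) = A$. Your extra care in passing between $\rho$ and its extension $\overline{\rho}$ (matching orbits and invariant subsets) is a routine but correct filling-in of details the paper leaves implicit.
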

	
	We can also solely look at the standard rational form $\n_{\G,c}^\Q$ which corresponds to the trivial morphism $\rho:\Gal(\Qbar/\Q) \to \Aut(\overline{\G}):\sigma \mapsto \Id$. A condition for $\n_{\G,c}^\Q$ to be Anosov can then be formulated as follows.
	
	\begin{cor}[Standard rational form]
		\label{cor:standRatForm}
		Let $\G = (S,E)$ be a graph and $\n_{\G,c}^\Q$ the associated rational $c$-step nilpotent Lie algebra. Then $\n_{\G,c}^\Q$ is Anosov if and only if $|\lambda| > 1$ for any $\lambda \in \Lambda$ and for any edge in the quotient graph $e \in \overline{E}$ we have $\sum_{\lambda \in e} |\lambda| > c$. 
	\end{cor}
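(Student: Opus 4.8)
The plan is to derive the statement directly from Theorem~\ref{thm:AnosovOrbitCond}. The standard rational form $\n_{\G,c}^\Q$ is exactly the one attached to the trivial (constant) morphism $\rho:\Gal(\Qbar/\Q)\to\Aut(\overline{\G})$, for which $\rho_\tau=\Id$ and $z_\rho\equiv 1$. Feeding this into Theorem~\ref{thm:AnosovOrbitCond}, the invariance hypothesis on $A\cup\rho_\tau(A)=A$ is vacuous and the inequality simplifies, so $\n_{\G,c}^\Q$ is Anosov if and only if
\[ (\ast)\qquad c<\sum_{\lambda\in A}|\lambda|\quad\text{for every non-empty connected }A\subseteq\Lambda. \]
It then suffices to show that $(\ast)$ is equivalent to the asserted condition on $\overline{\G}$.

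The combinatorial translation rests on one structural observation about coherent components: since every transposition of two vertices in the same component $\lambda$ is a graph automorphism, the symmetric group $\Perm(\lambda)$ embeds into $\Aut(\G)$, acting transitively on the vertices of $\lambda$ and on the unordered pairs of vertices of $\lambda$. From this I would extract three facts. First, each $\lambda$ is either a clique or an independent set of $\G$; hence $\{\lambda\}$ is a connected set of coherent components precisely when $|\lambda|=1$ or $\lambda$ is a clique, and $\{\lambda\}$ is a loop of $\overline{\G}$ (an element of $\overline{E}$ of the form $\{\lambda,\lambda\}$) precisely when $\lambda$ is a clique with $|\lambda|\geq 2$, in which case $\sum_{\mu\in\{\lambda\}}|\mu|=|\lambda|$. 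Second, if $\lambda\neq\mu$ and some vertex of $\lambda$ is adjacent to some vertex of $\mu$, then, applying $\Perm(\lambda)$ and $\Perm(\mu)$, every vertex of $\lambda$ is adjacent to every vertex of $\mu$, so $\lambda\cup\mu$ spans a connected (complete bipartite) subgraph, i.e.\ $\{\lambda,\mu\}$ is a connected set of coherent components. Third, if $A\subseteq\Lambda$ is a non-empty connected set with $|A|\geq 2$, then applying Definition~\ref{def:connectedGraph} to the partition of $\bigcup_{\lambda\in A}\lambda$ into one component and the rest produces two distinct members $\lambda,\mu\in A$ with $\{\lambda,\mu\}\in\overline{E}$.

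With these in hand both implications are short. For $(\ast)\Rightarrow$ the condition: applying $(\ast)$ to $A=\{\lambda\}$ when $|\lambda|=1$ would give $c<1$, impossible, so $|\lambda|>1$ for all $\lambda$; applying $(\ast)$ to $A=\{\lambda,\mu\}$ for an edge $\{\lambda,\mu\}\in\overline{E}$ with $\lambda\neq\mu$ (connected by the second fact) gives $c<|\lambda|+|\mu|$; and applying $(\ast)$ to $A=\{\lambda\}$ for a loop $\{\lambda\}\in\overline{E}$ (connected by the first fact, $\lambda$ being a clique) gives $c<|\lambda|$. Conversely, assume $|\lambda|>1$ for all $\lambda$ and $\sum_{\nu\in e}|\nu|>c$ for all $e\in\overline{E}$, and take a non-empty connected $A\subseteq\Lambda$. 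If $|A|=1$, then $A=\{\lambda\}$ with $|\lambda|\geq 2$, so by the first fact $\lambda$ is a clique, $\{\lambda\}\in\overline{E}$, and $c<|\lambda|=\sum_{\mu\in A}|\mu|$. If $|A|\geq 2$, the third fact gives distinct $\lambda,\mu\in A$ with $\{\lambda,\mu\}\in\overline{E}$, whence $c<|\lambda|+|\mu|\leq\sum_{\nu\in A}|\nu|$. This establishes $(\ast)$ and finishes the proof.

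The only genuine difficulty is this bookkeeping: a connected set of coherent components is defined at the level of $\G$ (connectedness of the union of underlying vertex sets), whereas the target condition lives on $\overline{\G}$, so the two must be carefully reconciled, in particular accounting for loops of $\overline{\G}$ and for components that are non-trivial independent sets. The three structural facts above are precisely what bridge this gap, after which everything reduces to the cases $|A|\leq 2$.
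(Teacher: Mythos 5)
Your proof is correct and follows the approach the paper implicitly intends: specialize Theorem~\ref{thm:AnosovOrbitCond} to the trivial morphism (so $\rho_\tau=\Id$, $z_\rho\equiv 1$, and every $A\subset\Lambda$ is $\rho$-invariant) and then translate the resulting criterion on connected sets of coherent components into the stated condition on edges and loops of $\overline{\G}$. The three combinatorial facts you isolate (the clique/independent dichotomy for a coherent component, adjacency spreading via the $\Perm(\lambda)\times\Perm(\mu)$-action, and the extraction of an adjacent pair from any connected $A$ with $|A|\geq 2$) are exactly the bookkeeping the paper leaves tacit, and your handling of loops of $\overline{\G}$ is careful and accurate.
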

	
	\begin{remark}[Correction to a result of \cite{main06-1}]
		We like to note that the above corollary is a correction to the characterization stated in \cite[Theorem 4.3.]{main06-1}, which uses the weaker condition of Corollary \ref{cor:standRatForm} only for edges $e$ inside a coherent component $\lambda$ (or thus the resulting loops in the quotient graph). We present an example to show that \cite[Theorem 4.3.]{main06-1} is false. 
		
		Let $\G$ be the graph from Example \ref{ex:completeBiparteGraph} for $n = 2$. The graph and its reduced graph are drawn below.
		\begin{figure}[H]
			\centering
			\begin{tikzpicture}[every loop/.style={}]
				\draw (0,0) -- (1,1);
				\draw (0,0) -- (1,0);
				\draw (0,1) -- (1,0);
				\draw (0,1) -- (1,1);
				\filldraw [black] (0,0) circle (2pt) node[left = 0.15] {$\alpha_2$};
				\filldraw [black] (1,0) circle (2pt) node[right = 0.15] {$\beta_2$};
				\filldraw [black] (0,1) circle (2pt) node[left = 0.15] {$\alpha_1$};
				\filldraw [black] (1,1) circle (2pt) node[right = 0.15] {$\beta_1$};
				\node at (0.5,1.5) {$\G$};

				\draw (4,0.5) -- (5,0.5);
				\filldraw [black] (4,0.5) circle (2pt) node[anchor = east] {2\,} node[below = 2mm] {$\lambda_{1}$};
				\filldraw [black] (5,0.5) circle (2pt) node[anchor = west] {\,2} node[below = 2mm] {$\lambda_{2}$};
				\node at (4.5,1.5) {$\overline{\G}$};
			\end{tikzpicture}
		\end{figure}
		\noindent The false result in \cite{main06-1} claims that $\n_{\G, c}^\Q$ is Anosov for any integer $c > 1$, while in fact this Lie algebra is only Anosov for $c < 4$. The problem in the proof of \cite[Theorem 4.3.]{main06-1} lies with the eigenvectors arising from Lie brackets of vertices lying in different coherent components. In this example, the Lie bracket $[\alpha_1, [ \beta_1, [ \alpha_2, \beta_2]]]$ is non-zero in $\n_{\G, c}^\Q$ with $c \geq 4$ and will be an eigenvector with eigenvalue $\pm 1$ for any vertex-diagonal integer-like automorphism of $\n_{\G, c}^\Q$. This essentially proves $\n_{\G, c}^\Q$ is not Anosov for $c \geq 4$, but a detailed proof is provided by our main results, in particular Corollary \ref{cor:standRatForm}.
	\end{remark}
	
	The above two corollaries show that from all rational forms in $\n^{\R}_{\G, c}$ that can be Anosov, the standard one $\n^{\Q}_{\G, c}$ leads to the strongest condition.
	
	\begin{cor}
		\label{cor:standRatAnosovThenRatInRealAnosov}
		Let $\G$ be a simple undirected graph and $\n^\Q_{\G, c}$ the associated $c$-step nilpotent rational Lie algebra. If $\n^\Q_{\G,c}$ admits an Anosov automorphism, then so does any other rational form of the real Lie algebra $\n^{\R}_{\G,c}$.
	\end{cor}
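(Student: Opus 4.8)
The plan is to compare, via Theorem~\ref{thm:AnosovOrbitCond} and its real specialisation Corollary~\ref{cor:GraphAnosovRatOrbitConditionReal}, the combinatorial condition that characterises when the standard form $\n^\Q_{\G,c}$ is Anosov with the one characterising when an arbitrary real rational form is Anosov, and then to observe that the condition for the standard form is the strongest among all of them, so in particular it implies the condition for any other real form.

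First I would recall that the standard rational form $\n^\Q_{\G,c}$ corresponds to the trivial morphism $\rho_0:\Gal(L/\Q)\to\Aut(\overline{\G})$ with $L=\Q\subset\R$, which is trivially injective. Since $L\subset\R$ the image $(\rho_0)_\tau$ of complex conjugation is the identity, and every subset of $\Lambda$ is $\rho_0$-invariant, so Corollary~\ref{cor:GraphAnosovRatOrbitConditionReal} tells us that $\n^\Q_{\G,c}$ being Anosov is equivalent to
\[ c < \sum_{\lambda\in A}|\lambda| \qquad\text{for every non-empty connected subset } A\subset\Lambda. \]
This is exactly the hypothesis of the corollary. (Alternatively one may phrase this through Corollary~\ref{cor:standRatForm}.)

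Next, let $\n$ be an arbitrary rational form of $\n^\R_{\G,c}$. By Theorem~\ref{thm:injectiveVersionClassificationRationalForms}, up to $\Q$-isomorphism $\n=\n^\Q_{\rho,c}$ for some finite Galois extension $L/\Q$ with $L\subset\R$ and some injective morphism $\rho:\Gal(L/\Q)\to\Aut(\overline{\G})$. As $L\subset\R$ we again have $\rho_\tau=\Id$, so Corollary~\ref{cor:GraphAnosovRatOrbitConditionReal} applies and shows that $\n^\Q_{\rho,c}$ is Anosov if and only if $c<\sum_{\lambda\in A}|\lambda|$ for every non-empty connected \emph{$\rho$-invariant} subset $A\subset\Lambda$.

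Finally I would conclude by noting that each such $A$ is in particular a non-empty connected subset of $\Lambda$, so the inequality that must be verified for $\n^\Q_{\rho,c}$ is a special case of the inequality that holds, by hypothesis, for $\n^\Q_{\G,c}$. Hence $\n^\Q_{\rho,c}$, and therefore $\n$, is Anosov. There is no genuine obstacle in this argument: its whole content is the observation that moving from the standard real form to a non-standard one only \emph{restricts} the family of connected sets on which the defining inequality has to be checked, from all of them to the $\rho$-invariant ones. The only points that need a little care are invoking Theorem~\ref{thm:injectiveVersionClassificationRationalForms} to put an arbitrary real form in the normal shape $\n^\Q_{\rho,c}$, and using the reality of $L$ to guarantee $\rho_\tau=\Id$, which is what makes the simplified Corollary~\ref{cor:GraphAnosovRatOrbitConditionReal} available in both cases.
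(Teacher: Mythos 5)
Your argument is correct and is exactly the paper's (implicit) reasoning: the paper derives this corollary directly from Corollary \ref{cor:GraphAnosovRatOrbitConditionReal} and Corollary \ref{cor:standRatForm}, noting that the standard form imposes the inequality $c<\sum_{\lambda\in A}|\lambda|$ on \emph{all} non-empty connected subsets, while any other real form (with $\rho_\tau=\Id$ and $z_\rho\equiv 1$) only requires it on the $\rho$-invariant ones. Your write-up just makes this comparison explicit, including the appeal to Theorem \ref{thm:injectiveVersionClassificationRationalForms} to put an arbitrary real form in the shape $\n^\Q_{\rho,c}$ with $L\subset\R$.
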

	
	\begin{remark}
		Note that Corollary \ref{cor:standRatAnosovThenRatInRealAnosov} does not generalize to the rational forms of the complex Lie algebra $\n_{\G,c}^\C$. Example \ref{ex:twoCopiesHeisenbergComplement} will illustrate this. 
	\end{remark}

	\section{Examples and applications}
	\label{sec:applications}
	
	In this section we illustrate how easy it is to apply our main result for some families of graphs. In particular, certain classifications in low dimension, as given in \cite{lw09-1} are an immediate consequence of Theorem \ref{thm:AnosovOrbitCondInjective}. First, let us mention a classical result by Erd\"os and Renyi stating that almost every graph has a trivial automorphism group, see \cite{er63-1}. In particular, these graphs have coherent components of size $1$. Using Theorem \ref{thm:injectiveVersionClassificationRationalForms} and Theorem \ref{thm:AnosovOrbitCondInjective} we thus immediately get the following result.
	
	\begin{theorem}
		Fixing an integer $c > 1$, the Lie algebra $\n_{\G, c}^\C$ has an (up to $\Q$-isomorphism) unique rational form which is not Anosov for almost all graphs $\G$.
	\end{theorem}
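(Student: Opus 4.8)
The plan is to deduce the statement immediately from the Erd\"os--R\'enyi theorem together with Theorem \ref{thm:injectiveVersionClassificationRationalForms} and Corollary \ref{cor:standRatForm}. By \cite{er63-1}, the proportion of (labelled) graphs on $n$ vertices with trivial automorphism group tends to $1$ as $n \to \infty$, so it suffices to show that for every graph $\G$ with at least one vertex and $\Aut(\G) = \{\Id\}$, the complex Lie algebra $\n_{\G, c}^\C$ has, up to $\Q$-isomorphism, exactly one rational form, and that this form is not Anosov.

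First I would unwind what triviality of $\Aut(\G)$ forces on the combinatorial data. If two distinct vertices $\alpha, \beta$ lay in the same coherent component, their transposition would be a non-trivial graph automorphism; hence every coherent component is a singleton, so $|\lambda| = 1$ for all $\lambda \in \Lambda$ and $\Lambda$ is identified with $S$. Consequently the quotient graph $\overline{\G}$ carries the constant weight $1$ and has no loops (loops in $\overline{\G}$ arise only from edges inside a coherent component), so $\overline{\G}$ is just $\G$ with all weights equal to $1$; in particular $\Aut(\overline{\G}) \cong \Aut(\G) = \{\Id\}$.

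Next I would apply Theorem \ref{thm:injectiveVersionClassificationRationalForms}: an injective morphism $\rho : \Gal(L/\Q) \to \Aut(\overline{\G})$ into the trivial group forces $\Gal(L/\Q)$ to be trivial, hence $L = \Q$ and $\rho$ is the trivial morphism. Thus up to $\Q$-isomorphism there is a single rational form, namely the standard one $\n_{\G, c}^\Q$. Finally, to see it is not Anosov I would invoke Corollary \ref{cor:standRatForm} (equivalently Theorem \ref{thm:AnosovOrbitCondInjective} with $\rho$ trivial, $L = \Q \subset \R$ and $z_\rho \equiv 1$, applied to a single-vertex connected set $A$): $\n_{\G, c}^\Q$ is Anosov only if $|\lambda| > 1$ for every $\lambda \in \Lambda$. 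Since $\G$ has a vertex, $\Lambda$ is non-empty and each $\lambda$ has $|\lambda| = 1$, so for $c > 1$ the condition fails and $\n_{\G, c}^\Q$ is not Anosov, which finishes the proof.

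There is no genuine obstacle in this argument; the only point requiring a (one-line) verification is the identification $\Aut(\overline{\G}) \cong \Aut(\G)$ once all coherent components are singletons, after which the result is an immediate application of the machinery already developed.
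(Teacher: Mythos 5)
Your argument is correct and is essentially the paper's own: the paper likewise combines the Erd\H{o}s--R\'enyi result (trivial automorphism group, hence singleton coherent components) with Theorem \ref{thm:injectiveVersionClassificationRationalForms} to get uniqueness of the rational form and with Theorem \ref{thm:AnosovOrbitCondInjective} (applied to a single-vertex connected set, where $z_\rho \equiv 1$) to rule out Anosov automorphisms. Your added one-line check that $\Aut(\overline{\G}) \cong \Aut(\G)$ when all coherent components are singletons is correct and fills in the only detail the paper leaves implicit.
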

	
	\noindent This shows that having an Anosov rational form is a rare condition for Lie algebras associated to graphs and raises the question whether a similar statement holds for nilpotent Lie algebras in general.\\
	
	Let us now apply Theorem \ref{thm:AnosovOrbitCond} to certain classes of simple undirected graphs, a first of which is \textit{trees}. A \textit{tree} is a graph in which any two vertices are connected by exactly one path. Equivalently, a graph is a tree if and only if it is connected and has no cycles. For two vertices $\alpha,\beta$ in a connected graph $\G = (S, E)$, let $d(\alpha, \beta)$ denote the distance between $\alpha$ and $\beta$, given by the minimal number of edges needed to go from $\alpha$ to $\beta$ in the graph. The \textit{eccentricity} $e(\alpha)$ of a vertex $\alpha$ is defined as $e(\alpha) = \max\{ d(\alpha,\beta) \mid \beta \in S \}$. The \textit{center} of $\G$ is then defined as the set of vertices of $\G$ which have minimal eccentricity. It is a standard result that a tree has a center consisting of either one or two vertices. To illustrate this, two trees with their center are draw in Figure \ref{fig:centerTree1} and \ref{fig:centerTree2}.
	\begin{figure}[H]
		\centering
		\begin{minipage}{0.45\textwidth}
			\centering
			\begin{tikzpicture}
			\centering
			\draw (0,0) -- (1,0);
			\draw (1,0) -- (2,0);
			\filldraw [black] (0,0) circle (2pt);
			\filldraw [red] (1,0) circle (2pt);
			\filldraw [black] (2,0) circle (2pt);
			\filldraw [white] (1,-0.7) circle (2pt);
			\filldraw [white] (1,0.7) circle (2pt);
			\end{tikzpicture}
			\caption{A tree with a center consisting of one vertex, drawn in red.}
			\label{fig:centerTree1}
		\end{minipage} \hspace{5mm}
		\begin{minipage}{0.45\textwidth}
			\centering
			\begin{tikzpicture}
			\centering
			\draw [red] (0,0) -- (1,0);
			\draw (1,0) -- (1.7,0.7);
			\draw (1,0) -- (1.7,-0.7);
			\draw (0,0) -- (-0.7,0.7);
			\draw (0,0) -- (-0.7,-0.7);
			\filldraw [red] (0,0) circle (2pt);
			\filldraw [red] (1,0) circle (2pt);
			\filldraw [black] (1.7,0.7) circle (2pt);
			\filldraw [black] (1.7,-0.7) circle (2pt);
			\filldraw [black] (-0.7,0.7) circle (2pt);
			\filldraw [black] (-0.7,-0.7) circle (2pt);
			\end{tikzpicture}
			\caption{A tree with a center consisting of two adjacent vertices, drawn in red.}
			\label{fig:centerTree2}
		\end{minipage}
	\end{figure}

	\begin{prop}
		If $\G$ is a tree, then $\n^{\C}_{\G, c}$ has no Anosov rational forms for any $c > 1$.
	\end{prop}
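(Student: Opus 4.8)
The plan is to feed the Anosov characterization of Theorem~\ref{thm:AnosovOrbitCondInjective} (equivalently Theorem~\ref{thm:AnosovOrbitCond}) a single universal ``bad'' set, namely the set of coherent components that make up the \emph{center} of the tree. The center of a tree is tiny and is respected by every graph automorphism, and that alone will make the required strict inequality fail as soon as $c>1$.

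Concretely, let $Z$ be the center of $\G$. First I would recall the two classical facts about it: $Z$ consists of a single vertex or of two adjacent vertices, so $|Z|\le 2$ and $Z$ spans a connected subgraph of $\G$; and every graph automorphism of $\G$ preserves $Z$ setwise, since graph automorphisms are isometries for the graph distance and hence preserve eccentricities. I would then observe that $Z$ is a union of coherent components: if $\alpha,\beta$ lie in a common coherent component then the transposition $(\alpha\,\beta)$ is a graph automorphism, hence preserves $Z$, so $\alpha\in Z$ forces $\beta=(\alpha\,\beta)(\alpha)\in Z$; thus each coherent component is either contained in $Z$ or disjoint from it. Set
\[
  A:=\{\lambda\in\Lambda\mid\lambda\subseteq Z\},
\]
so that $\bigcup_{\lambda\in A}\lambda=Z$ and $A$ is a non-empty connected set of coherent components. (This already covers the degenerate trees: a one-vertex tree has $A=\{\{v\}\}$ with $|Z|=1$, and $\G=K_2$ has $A=\{\lambda\}$ with $|\lambda|=|Z|=2$.)

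Next I would check that $A$ is $\rho$-invariant for every injective $\rho\colon\Gal(L/\Q)\to\Aut(\overline{\G})$: for $\sigma\in\Gal(L/\Q)$ the graph automorphism $r(\rho_\sigma)\in\Aut(\G)$ preserves $Z$ and maps the vertex set $\lambda\subseteq Z$ onto $\rho_\sigma(\lambda)$, so $\rho_\sigma(\lambda)\subseteq Z$, i.e.\ $\rho_\sigma(\lambda)\in A$. In particular $\rho_\tau(A)=A$, so $A\cup\rho_\tau(A)=A$ is $\rho$-invariant and $A$ is an admissible set in Theorem~\ref{thm:AnosovOrbitCondInjective}. If $\n^\Q_{\rho,c}$ were Anosov we would therefore need
\[
  c<\sum_{\lambda\in A}z_\rho(\lambda)\,|\lambda|\ \le\ \sum_{\lambda\in A}|\lambda|\ =\ |Z|\ \le\ 2,
\]
using $z_\rho(\lambda)\le 1$; this contradicts $c>1$. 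Since, by Theorem~\ref{thm:injectiveVersionClassificationRationalForms}, every rational form of $\n^\C_{\G,c}$ is isomorphic to some $\n^\Q_{\rho,c}$ with $\rho$ of this type, no rational form of $\n^\C_{\G,c}$ is Anosov.

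I do not expect a serious obstacle: the argument is short once the structural input on the center is in place. The only points that need a little care are that no coherent component straddles the boundary of $Z$ and that $A$ is invariant under the full $\rho$-action (not merely under $\rho_\tau$), and both reduce immediately to the automorphism-invariance of the center of a tree.
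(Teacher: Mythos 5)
Your proposal is correct and takes essentially the same route as the paper: both arguments single out the center of the tree as a non-empty, connected, automorphism-invariant union of coherent components of total size at most two, and then apply Theorem \ref{thm:AnosovOrbitCondInjective} (resp.\ Theorem \ref{thm:AnosovOrbitCond}) to conclude the strict inequality fails for every $\rho$ once $c>1$. The only difference is cosmetic: you bound the weighted sum uniformly via $z_\rho(\lambda)\le 1$, while the paper splits into three cases according to whether the center is a single vertex, a single coherent component of size two, or two adjacent singleton coherent components.
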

	\begin{proof}
		Take an arbitrary continuous morphism $\rho:\Gal(L/\Q) \to \Aut(\overline{\G})$. Let $C \subset S$ be the center of $\G = (S, E)$. Note that vertices in the same coherent component can be mapped onto each other by an automorphism of $\G$ and must thus have the same eccentricity. By consequence, the center is a union of coherent components invariant under $\Aut(\G)$. Since $\G$ is a tree, we are left with three cases:
		\begin{itemize}
			\item $|C| = 1$. Note that $C$ is itself a coherent component and must be preserved under any automorphism of $\overline{\G}$, implying that $\{C\}$ is $\rho$-invariant non-empty and connected. Since $|C| = 1 < c$, we get by Theorem \ref{thm:AnosovOrbitCond} that $\n_{\rho, c}^\Q$ is not Anosov.
			\item $|C| = 2$ and $C$ is a coherent component. Since the coherent component $C$ is preserved under any automorphism of $\overline{\G}$, we get that $\{C\}$ is $\rho$-invariant and non-empty. If the center of a tree contains two vertices, then they are adjacent. By consequence $\{C\}$ is connected as well. Since $|C| = 2 \leq c$, Theorem \ref{thm:AnosovOrbitCondInjective} tells us that $\n_{\rho, c}^\Q$ is not Anosov.
			\item $|C| = 2$ and $C$ is a union of two disjoint coherent components. Let us write $C = \lambda \cup \mu$ with $\lambda$ and $\mu$ disjoint coherent components. Note that $\{\lambda, \mu\} \in \overline{E}$ and thus that $\{\lambda, \mu\}$ is a non-empty connected set of coherent components. Since the center is preserved under any automorphism of $\overline{\G}$, we get that $\{\lambda, \mu\}$ is $\rho$-invariant as well. Depending on whether $\rho_\tau$ fixes $\lambda$ or not, we get that $z_\rho(\lambda) |\lambda| + z_\rho(\mu) |\mu|$ is equal to $1$ or $2$, respectively. In either case, Theorem \ref{thm:AnosovOrbitCond} tells us that $\n_{\rho, c}^\Q$ is not Anosov.
		\end{itemize}
		This concludes the proof.
	\end{proof}
	
	As a second class, let us consider the \textit{cycle graphs}. These graphs can be considered as the simplest graphs which are not trees and therefore the natural class to consider next. The cycle graph of size $n$ is given by vertices $S = \{1, \ldots, n\}$ and edges $E = \{ \{1,2\}, \{2, 3\}, \ldots, \{ n-1, n \}, \{n, 1\} \}$. If $n \geq 5$, then the coherent components are all singletons as illustrated below in Figure \ref{fig:cyclegraph6} for $n = 6$. It follows that for $n \geq 5$, the automorphism group of the quotient graph is isomorphic to the dihedral group of order $2n$. Let $a$ be a generator of the rotation subgroup of $\Aut(\overline{\G})$ and $b$ a reflection of $\Aut(\overline{\G})$. Then $\Aut(\overline{\G}) = \{\Id, a , \ldots, a^{n-1}, b, ab, \ldots a^{n-1}b\}$. Let us call a rational form of $\n_\G^\R$ of \textit{reflection type} if the corresponding representation $\Gal(L/\Q) \to \Aut(\overline{\G})$ has image $\{\Id, a^i b\}$ for some $1 \leq i \leq n$. Then using Corollary \ref{cor:GraphAnosovRatOrbitConditionReal}, it is not hard to prove following statement.
	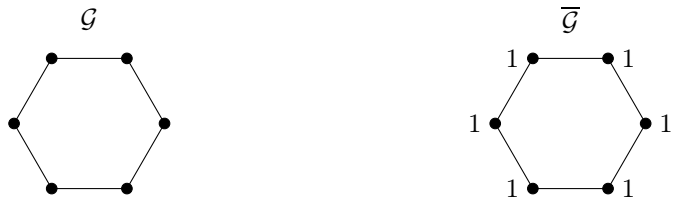
\begin{figure}[H]
		\centering
		\begin{subfigure}[b]{0.35\textwidth}
			\centering
			\begin{tikzpicture}
				\draw (0,0) -- (1,0);
				\draw (1,0) -- (1.5, 0.866025);
				\draw (1.5, 0.866025) -- (1, 2*0.866025);
				\draw (0,0) -- (-0.5, 0.866025);
				\draw (-0.5, 0.866025) -- (0, 2*0.866025);
				\draw (0, 2*0.866025) -- (1, 2*0.866025);
				\filldraw [black] (0,0) circle (2pt);
				\filldraw [black] (1,0) circle (2pt);
				\filldraw [black] (1.5, 0.866025) circle (2pt);
				\filldraw [black] (-0.5, 0.866025) circle (2pt);
				\filldraw [black] (0, 2*0.866025) circle (2pt);
				\filldraw [black] (1, 2*0.866025) circle (2pt);
				\node at (0.5,2.25) {$\mathcal{G}$};
				\filldraw [white] (0.5, -0.5) circle (2pt);
			\end{tikzpicture}
		\end{subfigure}\hspace{5mm}
		\begin{subfigure}[b]{0.35\textwidth}
			\centering
			\begin{tikzpicture}
				\draw (0,0) -- (1,0);
				\draw (1,0) -- (1.5, 0.866025);
				\draw (1.5, 0.866025) -- (1, 2*0.866025);
				\draw (0,0) -- (-0.5, 0.866025);
				\draw (-0.5, 0.866025) -- (0, 2*0.866025);
				\draw (0, 2*0.866025) -- (1, 2*0.866025);
				\filldraw [black] (0,0) circle (2pt) node[anchor = east] {1\,};
				\filldraw [black] (1,0) circle (2pt) node[anchor = west] {\,1};
				\filldraw [black] (1.5, 0.866025) circle (2pt) node[anchor = west] {\,1};
				\filldraw [black] (-0.5, 0.866025) circle (2pt) node[anchor = east] {1\,};
				\filldraw [black] (0, 2*0.866025) circle (2pt) node[anchor = east] {1\,};
				\filldraw [black] (1, 2*0.866025) circle (2pt) node[anchor = west] {\,1};
				\node at (0.5,2.25) {$\overline{\G}$};
				\filldraw [white] (0.5, -0.5) circle (2pt);
			\end{tikzpicture}
		\end{subfigure}
		\caption{The cycle graph on 6 vertices and its quotient graph.}
		\label{fig:cyclegraph6}
	\end{figure}
	\begin{prop}
		Let $\G$ be a cycle graph of size $n \geq 5$ and take any $c > 1$. The standard rational form $\n^\Q_{\G,c}$ and all reflection-type rational forms of $\n^\R_{\G,c}$ are not Anosov. The other rational forms of $\n^\R_{\G,c}$ are Anosov if and only if $n > c$.
	\end{prop}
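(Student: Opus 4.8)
The plan is to read everything off Corollary~\ref{cor:GraphAnosovRatOrbitConditionReal}, after recording two elementary facts about the cycle graph $\G = (S,E)$ with $S = \{1,\dots,n\}$ and $n \geq 5$. First, for $n \geq 5$ any two distinct vertices have unequal neighbourhoods, so every coherent component is a singleton; hence we may identify $\Lambda$ with $S$, we have $\overline{\G} = \G$ and $\Aut(\overline{\G}) \cong D_n$ (the dihedral group of order $2n$), and the criterion of Corollary~\ref{cor:GraphAnosovRatOrbitConditionReal} becomes: $\n^{\Q}_{\rho,c}$ is Anosov if and only if every non-empty connected $\rho$-invariant $A \subseteq S$ satisfies $|A| > c$. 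Second, the non-empty connected subsets of $C_n$ are exactly the arcs (sets of consecutive vertices) together with the full set $S$; a proper arc $A$ with $1 \leq |A| \leq n-1$ has a well-defined left endpoint, namely the unique $v \in A$ with $v-1 \notin A$.

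For the negative assertions I would argue as follows. The set $S$ is connected and $\rho$-invariant for every $\rho$, so whenever $c \geq n$ no rational form of $\n^{\R}_{\G,c}$ is Anosov; this already gives the ``only if'' half of the last claim. For the standard form the representation $\rho$ is trivial, so every singleton $\{v\}$ is a connected $\rho$-invariant set of size $1 \leq c$, and the criterion fails (one may instead quote Corollary~\ref{cor:standRatForm}, since here $|\lambda| = 1$ for all $\lambda$). For a reflection-type form the image of $\rho$ is $\{\Id, r\}$ with $r$ a reflection of the cycle $C_n$; geometrically such an $r$ either fixes a vertex $v$, yielding the bad connected $\rho$-invariant set $\{v\}$ of size $1$, or else interchanges the two endpoints $v,w$ of some edge, yielding the bad connected $\rho$-invariant set $\{v,w\}$ of size $2$. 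Since $c \geq 2$, in both situations the inequality $c < |A|$ fails, so reflection-type forms are never Anosov.

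For the positive direction of the final claim, assume $n > c$ and let $\rho$ be injective with image $H \leq D_n$ that is neither trivial nor a reflection subgroup. The crucial point is that then the only non-empty connected $\rho$-invariant subset of $S$ is $S$ itself. Indeed, $H$ must contain a non-trivial rotation $a^{d}$ with $1 \leq d \leq n-1$: otherwise $H \cap \langle a\rangle = \{\Id\}$, forcing $|H| \leq 2$ and hence $H = \{\Id\}$ or $H = \{\Id, r\}$ with $r$ a reflection, contrary to hypothesis. Now any connected $\rho$-invariant $A \subseteq S$ is $a^{d}$-invariant; if $A$ were a proper arc, its left endpoint $v$ would be sent by $a^{d}$ to $v + d \not\equiv v \pmod n$, which would be the left endpoint of $a^{d}(A) = A$, a contradiction. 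Hence $A = S$, and the criterion of Corollary~\ref{cor:GraphAnosovRatOrbitConditionReal} reduces to the single inequality $c < |S| = n$, which holds by assumption; so $\n^{\Q}_{\rho,c}$ is Anosov. Combined with the observation that $c \geq n$ forbids being Anosov, this shows the remaining rational forms are Anosov precisely when $n > c$.

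I do not anticipate a genuine obstacle here: the argument is essentially bookkeeping once Corollary~\ref{cor:GraphAnosovRatOrbitConditionReal} is available, and the only step requiring a moment's thought is the elementary description of connected subsets of $C_n$ invariant under a group containing a non-trivial rotation, handled by the left-endpoint argument above. (That for each relevant $H$ a rational form of $\n^{\R}_{\G,c}$ with image $H$ actually exists, i.e.\ that $H$ arises as the Galois group of a totally real extension of $\Q$, is a separate, standard point; it is not needed for the characterization itself and is of the same nature as the existence statement used to deduce Corollary~\ref{cor:AnosovNilmanifoldCycle}.)
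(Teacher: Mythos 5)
Your proposal is correct and follows essentially the same route as the paper: reduce to Corollary \ref{cor:GraphAnosovRatOrbitConditionReal} (and Corollary \ref{cor:standRatForm} for the standard form) using that all coherent components are singletons, kill the trivial and reflection-type cases via an invariant connected set of size $1$ or $2$, and for the remaining forms use a non-trivial rotation in the image of $\rho$ to show the only non-empty connected invariant set is all of $\Lambda$, giving the criterion $c<n$. Your left-endpoint argument and the vertex-fixed/edge-swapped dichotomy for reflections are just slightly more explicit versions of the paper's ``up to conjugating the action'' step, so there is nothing to add.
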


	\begin{proof}
		Since the coherent components of $\G$ are all singletons, it follows by Corollary \ref{cor:standRatForm} that $\n_{\G,c}^\Q$ is not Anosov. If $\n^\Q_{\rho, c}$ is a reflection-type rational form of $\n_{\G,c}^\R$, then after possibly conjugating the $\rho$-action by an automorphism of the quotient graph, either $\{\{1\}\}$ or $\{\{1\}, \{2\}\}$ is $\rho$-invariant. In any case Corollary \ref{cor:GraphAnosovRatOrbitConditionReal} tells us that $\n^\Q_{\rho, c}$ is not Anosov. If $\n^\Q_{\rho, c}$ is any other rational form of $\n_{\G, c}^\R$, then the image of $\rho$ must contain a non-trivial rotation. Now suppose $A \subset \Lambda$ is a non-empty connected $\rho$-invariant subset of coherent components. Since $A$ must be connected, we get that up to possibly conjugating the action of $\rho$ by an automorphism of the quotient graph, that $A = \{ \{1\}, \ldots , \{k\}\}$ for some $1 \leq k \leq n$. Using that $A$ is also $\rho$-invariant, we know it must be preserved under the non-trivial rotation. This is only possible if $k = n$ implying that $A = \Lambda = \{\{1\}, \ldots, \{n\}\}$ and $\sum_{\lambda \in A} |\lambda| = n$. By Corollary \ref{cor:GraphAnosovRatOrbitConditionReal} we get that $\n_{\rho, c}^\R$ is Anosov if and only if $n > c$.
	\end{proof}

	\begin{prop}
		If $\G$ is a graph for which there is a non-negative integer $k \geq 0$ such that $\G$ has a unique vertex of degree $k$, then $\n_{\G, c}^\C$ has no Anosov rational forms for any $c>1$.
	\end{prop}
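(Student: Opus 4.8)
The plan is to exhibit a singleton coherent component that is fixed by every automorphism of the quotient graph, so that the orbit criterion of Theorem~\ref{thm:AnosovOrbitCond} immediately rules out the Anosov property for \emph{all} rational forms. Let $v \in S$ be the unique vertex of degree $k$. Any graph automorphism preserves vertex degrees, so every $\theta \in \Aut(\G)$ fixes $v$. In particular, for $w \neq v$ the transposition of $v$ and $w$ cannot be a graph automorphism, since it would map $v$ to $w$ and force $w$ to have degree $k$, contradicting the uniqueness of $v$. Hence the coherent component $\lambda_v$ of $v$ equals $\{v\}$, and as $v$ is a fixed point of $\Aut(\G)$, the component $\lambda_v$ is fixed by every $\varphi \in \Aut(\overline{\G})$.

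Next I would take an arbitrary rational form of $\n_{\G, c}^\C$. By Theorem~\ref{thm:injectiveVersionClassificationRationalForms} it is $\Q$-isomorphic to some $\n_{\rho, c}^\Q$ with $\rho: \Gal(L/\Q) \to \Aut(\overline{\G})$ an injective morphism; replacing $\rho$ by its extension $\overline{\rho}:\Gal(\Qbar/\Q)\to\Aut(\overline{\G})$ does not change the isomorphism type, and $z_{\overline{\rho}} = z_\rho$. Consider $A := \{\lambda_v\} \subset \Lambda$. It is non-empty and connected (a single coherent component always is), and since $\lambda_v$ is fixed by $\rho_\tau \in \Aut(\overline{\G})$ we have $A \cup \rho_\tau(A) = A$, which is $\overline{\rho}$-invariant because $\lambda_v$ is fixed by all of $\Aut(\overline{\G})$.

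Applying Theorem~\ref{thm:AnosovOrbitCond} with this set $A$, the Anosov property would require
\[ c < \sum_{\lambda \in A \cup \rho_\tau(A)} z_{\overline{\rho}}(\lambda) \cdot |\lambda| = z_{\overline{\rho}}(\lambda_v) \cdot 1 \leq 1, \]
contradicting $c > 1$. Hence $\n_{\rho, c}^\Q$ is not Anosov, and as the chosen rational form was arbitrary, $\n_{\G, c}^\C$ has no Anosov rational form for any $c > 1$.

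I expect essentially no serious difficulty here: the argument is a short reduction whose only content is the elementary observation that a vertex uniquely determined by its degree is $\Aut(\G)$-fixed, hence yields a singleton $\Aut(\overline{\G})$-invariant coherent component, after which Theorem~\ref{thm:AnosovOrbitCond} does all the work. The only point that needs a careful sentence is checking that the chosen $A$ satisfies exactly the hypotheses of that theorem — non-empty, connected, with $A \cup \rho_\tau(A)$ being $\rho$-invariant — which is immediate in this situation.
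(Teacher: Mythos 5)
Your proposal is correct and follows essentially the same route as the paper: the unique vertex of degree $k$ forms a singleton coherent component fixed by every automorphism of $\overline{\G}$, so $A = \{\lambda_v\}$ is a non-empty connected set with $A \cup \rho_\tau(A) = A$ being $\rho$-invariant and weight sum at most $1 < c$, contradicting the criterion of Theorem~\ref{thm:AnosovOrbitCond}. Your extra care in invoking Theorem~\ref{thm:injectiveVersionClassificationRationalForms} to cover all rational forms is fine and matches the paper's (more implicit) use of an arbitrary $\rho$.
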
	
	
	\begin{proof}
		Note that all vertices in a coherent component have the same degree. By consequence, if $\G$ has a vertex $\alpha$ for which there are no other vertices of the same degree, then $\{\alpha\}$ is a coherent component of $\G$. Since graph automorphisms must preserve the degree of a vertex, it follows that $\alpha$ is fixed under any graph automorphism and by consequence the coherent component $\{\alpha\}$ is fixed under any automorphism of the quotient graph. It follows that for an arbitrary morphism $\rho:\Gal(L/\Q) \to \Aut(\overline{\G})$, the set $\{\{\alpha\}\}$ is a non-empty connected $\rho$-invariant set. Since $|\{\alpha\}| = 1 < c$, Theorem \ref{thm:AnosovOrbitCond} tell us that $\n_{\rho, c}^\Q$ is not Anosov.
	\end{proof}

	The following examples show that our methods can be used to simplify certain classifications of Anosov Lie algebras and to extend them.
	
	\begin{example}[Direct sum of two free nilpotent Lie algebras of same rank and nilpotency class]
		\label{ex:twoCopiesHeisenberg}
		Consider the graph $\G_n = (S_n, E_n)$ defined by $S_n = \{ \alpha_1, \ldots, \alpha_n, \beta_1, \ldots, \beta_n\}$ and $E_n = \{ \{\alpha_i, \alpha_j \} \mid 1 \leq i < j \leq n\} \cup \{\{\beta_i, \beta_j\} \mid 1 \leq i < j \leq n\}$. The set of coherent components is then given by $\Lambda = \{ \lambda_1, \lambda_2 \}$ with $\lambda_1 = \{\alpha_1, \ldots, \alpha_n\}$ and $\lambda_2 = \{\beta_1, \ldots, \beta_n\}$. A figure of the quotient graph is given below.
		\begin{figure}[H]
			\centering
			\begin{tikzpicture}[every loop/.style={}]
				\filldraw [black] (-0.5,0.5) circle (2pt) node[anchor = east] {$n\:$} node[below = 0.15] {$\lambda_1$};
				\filldraw [black] (0.5,0.5) circle (2pt) node[anchor = west] {$\:n$} node[below = 0.15] {$\lambda_2$};
				\draw[cm={1.5 ,0 ,0 ,1.5 ,(-0.5,0.5)}] (0,0)  to[in=50,out=130, loop] (0,0);
				\draw[cm={1.5 ,0 ,0 ,1.5 ,(0.5,0.5)}] (0,0)  to[in=50,out=130, loop] (0,0);
				\node at (0,1.5) {$\overline{\G_n}$};
			\end{tikzpicture}
		\end{figure}
		\noindent The Lie algebra $\n_{\G_n, c}^\C$ is isomorphic to the Lie algebra direct sum of two free $c$-step nilpotent Lie algebras of rank $n$. Let $\varphi \in \Aut(\overline{\G_n})$ denote the only non-trivial automorphism of $\overline{\G_n}$. For any non-zero square-free integer $d \neq 1$, let $\sigma_d$ denote the only non-trivial automorphism in $\Gal(\Q(\sqrt{d})/\Q)$. If $d = 1$, then $\Gal(\Q(\sqrt{d})/\Q)$ is trivial and we let $\sigma_1$ denote the trivial automorphism. All injective group morphisms from a Galois group over $\Q$ to $\Aut(\overline{\G_n})$ are then given by
		\[ \rho_d:\Gal(\Q(\sqrt{d})/\Q) \to \Aut(\overline{\G_n}): \sigma_d \mapsto \varphi\]
		for some non-zero square-free integer $d$. All rational forms of $\n_{\G_n, c}^\C$ can thus be written as
		\[\n_{d, n, c}^\Q := \n_{\rho_d, c}^\Q \subset \n_{\G_n, c}^{\Q(\sqrt{d})}\]
		for a non-zero square-free integer $d$. Moreover, for non-zero square-free integers $d_1, d_2$ it holds that $\n_{d_1, n , c}^\Q \cong \n_{d_2, n , c}^\Q$ if and only if $d_1 = d_2$. For $d = 1$, $\rho_d$ is the trivial morphism and we retrieve the standard rational form $\n_{1, n , c}^\Q \cong \n_{\G_n, c}^\Q$. From Theorem \ref{thm:AnosovOrbitCond}, we can now easily see that for a square-free non-zero integer $d$: 
		\[ \boxed{\n_{d, n, c}^\Q \text{ is Anosov } \Leftrightarrow d > 1 \vee (d \leq 1 \wedge c < n).} \]
		This result was already known for $c = 2$ and $n = 2$ from the classification of Anosov Lie algebras of dimension $6$, where the Lie algebra $\n_{k,2,2}^\Q$ was denoted by $\n_k^\Q$ (see \cite[Example 2.7. and Theorem 4.2.]{lw09-1}).
	\end{example}
	
	\begin{definition}
		Let $\G = (S,E)$ be a simple undirected graph. The graph $\G^\ast := (S,E^\ast)$ with
		\[E^\ast = \{ \{ \alpha,\beta \} \mid \alpha,\beta \in S, \alpha \neq \beta, \{\alpha,\beta\} \notin E \} \]
		is called the \textit{complement graph of $\G$}.
	\end{definition}

	Note that the coherent components of a simple undirected graph and its complement graph coincide. Moreover, we have that $\Aut(\overline{\G}) = \Aut(\overline{\G^\ast})$. This being said, let us look at the Lie algebra associated with the complement graph of the one from Example \ref{ex:twoCopiesHeisenberg}.
	
	\begin{example}[Free nilpotent sum of two abelian Lie algebras of same dimension]
		\label{ex:twoCopiesHeisenbergComplement}
		Let $\G_n = (S_n,E_n)$ be the graph from Example \ref{ex:twoCopiesHeisenberg} and $\G_n^\ast$ its complement graph. Note that $\G_n^\ast$ is also the cycle graph of size $4$. The quotient graph $\overline{\G_n^\ast}$ is drawn below.
		\begin{figure}[H]
			\centering
			\begin{tikzpicture}[every loop/.style={}]
				\draw (-0.5,0.5) -- (0.5,0.5);
				\filldraw [black] (-0.5,0.5) circle (2pt) node[anchor = east] {$n\:$} node[below = 0.15] {$\lambda_1$};
				\filldraw [black] (0.5,0.5) circle (2pt) node[anchor = west] {$\:n$} node[below = 0.15] {$\lambda_2$};
				\node at (0,1.2) {$\overline{\G_n^\ast}$};
			\end{tikzpicture}
		\end{figure}
		\noindent The Lie algebra $\n_{\G_n, c}^L$ is isomorphic to the free $c$-step nilpotent sum of two abelian Lie algebras of dimension $n$. Let $\varphi \in \Aut(\overline{\G_n})$ denote the only non-trivial automorphism of $\overline{\G_n}$. Since $\Aut(\overline{\G_n}) = \Aut(\overline{\G_n^\ast})$, it follows that all injective group morphisms from finite Galois groups over $\Q$ into $\Aut(\overline{\G_n^\ast})$ are given by the same morphisms $\rho_d$ for $d$ a non-zero square-free integer as defined in Example \ref{ex:twoCopiesHeisenberg}. As a consequence all rational forms of $\n_{\G_n^\ast, c}^\C$ are given by
		\[\n_{d, n, c}^{\Q, \ast} := \n_{\rho_d, c}^\Q \subset \n_{\G_n^\ast, c}^{\Q(\sqrt{d})}\]
		for a non-zero square-free integer $d$. Moreover, for non-zero square-free integers $d_1, d_2$ it holds that $\n_{d_1, n , c}^{\Q, \ast} \cong \n_{d_2, n , c}^{\Q, \ast}$ if and only if $d_1 = d_2$. For $d = 1$ we retrieve the standard rational form $\n_{1, n , c}^{\Q, \ast} \cong \n_{\G_n^\ast, c}^\Q$. From Theorem \ref{thm:AnosovOrbitCond}, we can now easily see that for a square-free non-zero integer $d$:
		\[ \boxed{\n_{d, n, c}^{\Q, \ast} \text{ is Anosov } \Leftrightarrow (d \geq 1 \wedge c < 2n) \vee (d < 1 \wedge c < n).} \]
		This result was already known for $c = 2$, $n = 2$ and $d \geq 1$ from the classification of real Anosov Lie algebras of dimension  $8$, where the Lie algebra $\n_{k,2,2}^{\Q,\ast}$ was denoted by $\mathfrak{h}_k^\Q$ (see \cite[Theorem 4.2.]{lw09-1}).
	\end{example}
	
	When studying Anosov automorphisms on rational Lie algebras in low dimensions, one observes that in all known cases an Anosov Lie algebra always has an Anosov automorphism with only real eigenvalues. The results in this paper allow us to present an example where this is no longer the case, so where every Anosov automorphism has non-real eigenvalues.
	
	\begin{example}[Anosov rational form which does not admit Anosov automorphism with real eigenvalues]
		\noindent Let $\G = (S, E)$ be the cycle graph on $6$ vertices as drawn in Figure \ref{fig:cyclegraph6}. Let us write the vertices and edges as $S = \{ \alpha_1, \ldots, \alpha_6 \}$ and $E = \{ \{ \alpha_{1}, \alpha_{2} \}, \ldots \{\alpha_5, \alpha_6\}, \{\alpha_6, \alpha_1 \}\}$. The coherent components of $\G$ are then simply all the singletons $\Lambda = \{ \lambda_i := \{\alpha_i\} \mid 1\leq i \leq 6 \}$. By consequence there is a natural bijection $h:S \to \Lambda: \alpha \mapsto \{\alpha\}$ which gives us a splitting morphism $r:\Aut(\overline{\G}) \to \Aut(\G): \varphi \mapsto h^{-1} \circ \varphi \circ h$. Let us write $\Aut(\overline{\G}) = \{1, a, \ldots a^{5}, b, ab, \ldots, a^5 b\}$ where $a$ and $b$ are defined by $a(\lambda_1) = \lambda_{2}$, $a(\lambda_2) = \lambda_3$, $b(\lambda_1) = \lambda_1$ and $b(\lambda_2) = \lambda_6$. Thus $a$ is a generator for the rotations and $b$ is a reflection, like in our general discussion of cycle graphs.
		
		Now let $L$ be the splitting field of the polynomial $X^3 - 2$ over $\Q$. The roots of this polynomial are given by $\sqrt[3]{2}$, $\omega \sqrt[3]{2}$ and $\overline{\omega}\sqrt[3]{2}$ where $\omega = e^{i\frac{2\pi}{3}}$. The Galois group $\Gal(L/\Q)$ is generated by the elements $\sigma$ and $\tau$, defined by
		\begin{align*} 
			\sigma(\sqrt[3]{2}) = \omega\sqrt[3]{2}, \quad \sigma(\omega\sqrt[3]{2}) = \overline{\omega}\sqrt[3]{2}, \quad  \sigma(\overline{\omega}\sqrt[3]{2}) = \sqrt[3]{2},\\
			\tau(\sqrt[3]{2}) = \sqrt[3]{2}, \quad \tau(\omega\sqrt[3]{2}) = \overline{\omega}\sqrt[3]{2}, \quad \tau(\overline{\omega}\sqrt[3]{2}) = \omega\sqrt[3]{2}.
		\end{align*}
		Note that $\tau$ is just the complex conjugation automorphism on $L$ and that $\Gal(L/\Q)$ is isomorphic to the dihedral group of order $6$. It follows that we have an injective group morphism
		\[\rho:\Gal(L/\Q) \to \Aut(\overline{\G}): \sigma \mapsto a^2, \tau \mapsto b\]
		with corresponding rational form $\n^\Q_{\rho,2}$ of $\n_{\G, 2}^\C$. Using Theorem \ref{thm:AnosovOrbitCondInjective}, it is straightforward to verify that $\n_{\rho, 2}^\Q$ is Anosov. Indeed, $z_\rho$ takes only the value 1 on $\Lambda$ and for any non-empty connected $A \subset \Lambda$ for which $A \cup \rho_\tau(A)$ is $\rho$-invariant, we have that $A \cup \rho_\tau(A) = \Lambda$. 
		
		For the convenience of the reader, we construct an explicit basis for the rational Lie algebra $\n_{\G, \rho}^\Q$ and compute the structure constants. We define the vectors $\beta_{i} = [\alpha_{i}, \alpha_{i+1}]$ for $1 \leq i \leq 5$ and $\beta_6 = [\alpha_6, \alpha_1]$. The following elements form a basis for $\n_{\G, \rho}^\Q$:		
		\begin{align*}
			X_i &= \left( \sqrt[3]{2} \right)^i \alpha_1 + \left( \omega\sqrt[3]{2} \right)^i \alpha_3 + \left( \overline{\omega}\sqrt[3]{2} \right)^i \alpha_5\\
			Y_i &= \left( \overline{\omega}\sqrt[3]{2} \right)^i \alpha_2 + \left( \sqrt[3]{2} \right)^i \alpha_4 + \left( \omega\sqrt[3]{2} \right)^i \alpha_6\\
			Z_i &= \left(\left( \sqrt[3]{2} \right)^i \beta_1 + \left( \omega\sqrt[3]{2} \right)^i \beta_3 + \left( \overline{\omega}\sqrt[3]{2} \right)^i \beta_5\right)\\
			&\quad \quad \quad - \left( \left(\omega \sqrt[3]{2}\right)^i \beta_2 + \left(\overline{\omega}\sqrt[3]{2}\right)^i \beta_4 + \left(\sqrt[3]{2}\right)^i \beta_6\right)\\
			W_i &= \omega\left(\left( \sqrt[3]{2} \right)^i \beta_1 + \left( \omega\sqrt[3]{2} \right)^i \beta_3 + \left( \overline{\omega}\sqrt[3]{2} \right)^i \beta_5\right)\\
			&\quad \quad \quad - \overline{\omega}\left( \left(\omega \sqrt[3]{2}\right)^i \beta_2 + \left(\overline{\omega}\sqrt[3]{2}\right)^i \beta_4 + \left(\sqrt[3]{2}\right)^i \beta_6\right)
		\end{align*}
		where $0 \leq i \leq 2$. In this basis, the Lie bracket of $\n^\Q_{\G, \rho}$ is given by the following relations
		\begin{alignat*}{3}
			[X_0, Y_0] &= Z_0 & \quad \quad [X_1, Y_0] &= Z_1 & \quad \quad [X_2, Y_0] &= Z_2\\
			[X_0, Y_1] &= -Z_1 - W_1 & \quad \quad [X_1, Y_1] &= -Z_2 - W_2 & \quad \quad [X_2, Y_1] &= -2Z_0 - 2W_0\\
			[X_0, Y_2] &= W_2 & \quad \quad [X_1, Y_2] &= 2W_0 & \quad \quad [X_2, Y_2] &= 2W_1.			
		\end{alignat*}
		Let us prove by contradiction that $\n_{\G, \rho}$ does not admit an Anosov automorphism with real eigenvalues. So, assume $f:\n_{\G, \rho}^\Q \to \n_{\G, \rho}^\Q$ is an Anosov automorphism with real eigenvalues. From Theorem \ref{thm:AnosovEigenvalueCond} and its proof, we know that there exists a map $\Psi:S \to \Qbar$, which satisfies the properties formulated in that theorem and such that the algebraic units in the image of $\Psi$ are eigenvalues of $f$. By the assumption, these eigenvalues are all real. Let $\overline{\rho}:\Gal(\Qbar/\Q) \to \Aut(\overline{\G}): \gamma \mapsto \rho(\gamma|_L)$ be the extended morphism. Note that since all coherent components are singletons, the group $\prod_{\lambda \in \Lambda} \Perm(\lambda)$ is trivial and thus condition \ref{item:thmeigenvalcond3} on $\Psi$ becomes $\forall \gamma \in \Gal(\Qbar/\Q): \gamma \circ \Psi = \Psi \circ r(\overline{\rho}_\gamma)$. Write $\xi := \Psi(\alpha_1)$. Now let $\gamma$ be any element in $\Stab_{\overline{\rho}}(\lambda_1)$. It follows that $\gamma(\xi) = (\gamma \circ \Psi)(\alpha_{1}) = (\Psi \circ r(\overline{\rho}_{\gamma}))(\alpha_{1}) = \Psi(\alpha_{1}) = \xi$. By consequence $\xi$ is fixed under $\Stab_{\overline{\rho}}(\lambda_{1})$ and in particular under $\ker(\overline{\rho})$. Note that $\Qbar^{\ker(\overline{\rho})}$ is exactly equal to $L$ and thus that $\xi \in L$. Now let $\overline{\sigma}$ be an extension of the field automorphism $\sigma$ to $\Qbar$ and note that $\sigma(\xi) = \overline{\sigma}(\xi) = (\overline{\sigma} \circ \Psi)(\alpha_{1}) = (\Psi \circ r(\overline{\rho}_{\overline{\sigma}}))(\alpha_1) = \Psi(\alpha_3)$ is also an eigenvalue of $f$. Since $\xi$ and $\sigma(\xi)$ are both real, we have $\tau(\xi) = \xi$ and $\tau\sigma(\xi) = \sigma(\xi)$. This implies $\sigma(\xi) = \tau\sigma(\xi) = \sigma^2 \tau(\xi) = \sigma^2(\xi)$ and thus that $\sigma(\xi) = \xi$. By consequence $\xi$ is an element of $L$, fixed by $\Gal(L/\Q)$, which in turn tells us that $\xi \in \Q$. The only algebraic units in $\Q$ are $1$ and $-1$ which are not hyperbolic. This gives us the contradiction.
	\end{example}

	\bibliography{ref}
	\bibliographystyle{plain}	
\end{document}